\newtheorem{theorem}{Theorem}
\newtheorem{corollary}[theorem]{Corollary}
\newtheorem{proposition}[theorem]{Proposition}
\newtheorem{observation}[theorem]{Observation}
\newtheorem{claim}[theorem]{Claim}
\newcommand{\dcm}{\mathbf{DCM}}
\newcommand{\db}{\mathrm{DB}}
\newcommand{\zsc}{\mathrm{DBD}}
\newcommand{\zsl}{\mathrm{DBDL}}
\newcommand{\edb}{\mathrm{EDB}}
\newcommand{\edbl}{\mathrm{EDBL}}
\newcommand{\zr}{\mathrm{R}}
\newcommand{\zbr}{\mathbf{R}}
\newcommand{\zbu}{\mathbf{U}}
\newcommand{\zbl}{\mathbf{L}}
\newcommand{\zlhs}{\mathrm{LHS}}
\newcommand{\zrhs}{\mathrm{RHS}}
\begin{document}

\title{Disjoint compatibility graph \\ of non-crossing matchings \\ of points in convex position}
\author{Oswin Aichholzer\thanks{
Institute for Software Technology,
Technische Universit\"at Graz. 
Inffeldgasse 16b/II, A-8010 Graz, Austria.
E-mail address \href{mailto://oaich@ist.tugraz.at}{\texttt{oaich@ist.tugraz.at}}.}
\and
Andrei Asinowski\thanks{
Institute of Computer Science, 
Freie Universit\"at Berlin.
Takustra{\ss}e 9, 14195 Berlin, Germany.
E-mail address \href{mailto://asinowski@mi.fu-berlin.de}{\texttt{asinowski@mi.fu-berlin.de}}.}
\and
Tillmann Miltzow\thanks{
Institute of Computer Science, 
Freie Universit\"at Berlin.
Takustra{\ss}e 9, 14195 Berlin, Germany.
E-mail address \href{mailto://miltzow@mi.fu-berlin.de}{\texttt{miltzow@mi.fu-berlin.de}}.} }
\date{\today}

\maketitle

\begin{abstract}
Let $X_{2k}$ be a set of $2k$ labeled points in convex position in the plane.
We consider geometric non-intersecting straight-line perfect matchings of $X_{2k}$.
Two such matchings, $M$ and $M'$, are \textit{disjoint compatible}
if they do not have common edges, and no edge of $M$ crosses an edge of $M'$.
Denote by $\dcm_k$ the graph whose vertices correspond to such matchings,
and two vertices are adjacent if and only if
the corresponding matchings are disjoint compatible.
We show that for each $k \geq 9$, the connected components of $\dcm_k$
form exactly three isomorphism classes -- namely,
there is a certain number of isomorphic \textit{small} components,
a certain number of isomorphic \textit{medium} components,
and one \textit{big} component.
The number and the structure of small and medium components is determined precisely.

\smallskip

\noindent \textit{Keywords:
Planar straight-line graphs,
disjoint compatible matchings,
reconfiguration graph,
non-crossing geometric drawings,
non-crossing partitions,
combinatorial enumeration.
}
\end{abstract}

\section{Introduction}\label{sec:intro}
\subsection{Basic definitions and main results}\label{sec:results}
Let $k$ be a natural number,
and let $X_{2k}$ be a set of $2k$ points in convex position in the plane,
labeled circularly (say, clockwise) by $P_1, P_2, \dots, P_{2k}$
(in figures, we label them just by $1, 2, \dots, 2k$).
We consider geometric \textbf{perfect} matchings of $X_{2k}$ realized by \textbf{non-crossing straight segments}.
Throughout the paper, the expression ``non-crossing matching'', or just the word ``matching'',
will only refer to matchings of this kind,
and to their combinatorial and topological generalizations that will be defined below
(unless specified otherwise).
The \textit{size} of such a matching is $k$, the number of edges.
It is well-known that the number of matchings of $X_{2k}$ is
the $k$th Catalan number $C_k = \frac{1}{k+1}\binom{2k}{k}$~\cite[A000108]{oeis}.
Three examples of matchings of size~$8$ are shown in Figure~\ref{fig:first_examples_n}.
\begin{figure}[h]
$$\includegraphics[width=130mm]{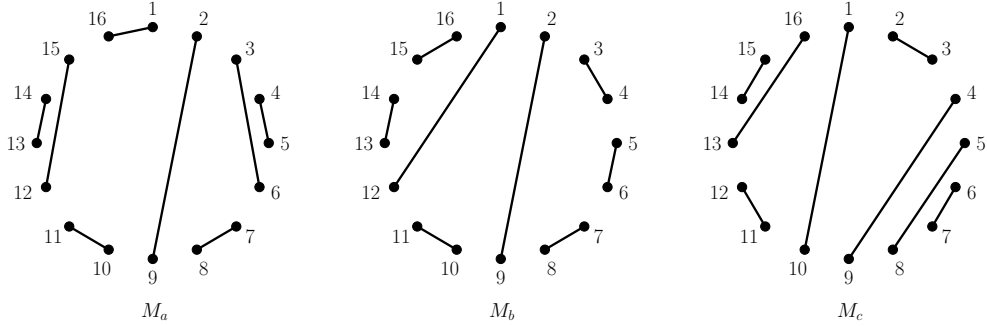}$$
\caption{Three examples of matchings of size $8$. $M_b$ and $M_c$ are disjoint compatible.}
\label{fig:first_examples_n}
\end{figure}

Two matchings $M$ and $M'$ of $X_{2k}$ are \textit{disjoint compatible}
if they do not have common edges (\textit{disjoint}),
and no edge of $M$ crosses an edge of $M'$ (\textit{compatible}).
In Figure~\ref{fig:first_examples_n},
the matchings $M_a$ and $M_b$ are not disjoint
($P_2 P_9$ is a common edge);
the matchings $M_a$ and $M_c$ are disjoint but not compatible
($P_3 P_6$ of $M_a$ and $P_4 P_9$ of $M_c$ cross each other);
the matchings $M_b$ and $M_c$ are disjoint compatible.

The \textit{disjoint compatibility graph} of matchings of size $k$ is the graph
whose vertices correspond to all such matchings of $X_{2k}$,
and two vertices are adjacent if and only if the corresponding matchings are disjoint compatible.
This graph will be denoted by $\dcm_k$.
The graph $\dcm_4$ is shown in Figure~\ref{fig:dcm4}.
It is clear that, while we consider point sets in convex position, the graph $\dcm_k$
does not depend on a specific set $X_{2k}$.
Occasionally we shall adopt the terminology from graph theory for the matchings
and say, for example,
``matching $M$ has degree $d$'',
``two matchings, $M$ and $N$ are connected''
to mean ``the vertex corresponding to $M$ in $\dcm_k$ has degree $d$'',
``the vertices corresponding to $M$ and $N$ in $\dcm_k$ are connected'', etc.
In particular,
``$M'$ is adjacent to $M$'' and ``$M'$ is a neighbor of $M$''
are synonyms of
``$M'$ is disjoint compatible to $M$''.

\begin{figure}[h]
$$\includegraphics[width=120mm]{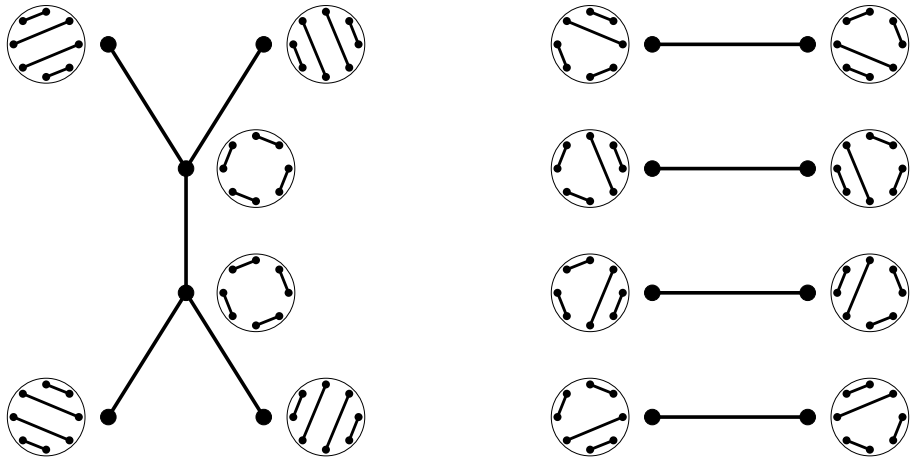}$$
\caption{The graph $\dcm_4$.}
\label{fig:dcm4}
\end{figure}

\medskip

In this paper we study the graphs $\dcm_k$,
mainly aiming for a description of their connected components
from the point of view of their structure, order
(that is, the number of vertices),
and isomorphism classes.
Our main results are the following theorems.

\begin{theorem}\label{thm:main_theorem}
For each $k \geq 9$, the connected components of $\dcm_k$
form exactly three isomorphism classes.
Specifically, there are several isomorphic components of the smallest order,
several isomorphic components of the medium order,
and one component of the biggest order.
\end{theorem}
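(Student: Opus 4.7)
The plan is to identify a structural invariant of a matching that is preserved, or almost preserved, along the edges of $\dcm_k$, so that matchings sort into only a small number of classes; each class then corresponds to one of the three isomorphism types of components.

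First, I would use the basic observation that if $M$ and $M'$ are disjoint compatible, then $M \cup M'$ is a non-crossing $2$-regular multigraph on $X_{2k}$ whose connected components are alternating even cycles supported on convex-position vertex subsets. A single move $M \to M'$ is thus specified by choosing a non-crossing collection of such alternating cycles using the edges of $M$. I would then catalog, for each candidate exceptional matching, which alternating-cycle configurations are available. Matchings that admit only very restricted configurations are the natural candidates for vertices of the small and medium components; ``generic'' matchings admit much richer families of moves and will form the big component.

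Second, I would identify the exceptional families explicitly. The most natural candidates are the rotational/reflective orbits of ``boundary-heavy'' matchings such as $\{P_1P_2, P_3P_4, \ldots, P_{2k-1}P_{2k}\}$ and a small number of closely related matchings obtained from it by local replacements on four or six consecutive points. Since every symmetry of the convex polygon induces an automorphism of $\dcm_k$, such families appear in dihedral orbits, giving ``several isomorphic'' components as claimed. For each such matching I would compute its component by hand: enumerate the disjoint compatible neighbors, iterate, and verify that the orbit closes up at a predicted order equal to either the small or the medium value. I would also check that the induced subgraphs on these orbits are isomorphic as abstract graphs, so that the claimed isomorphism classes are honest.

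Third, to show that all remaining matchings lie in a single ``big'' component, I would fix a canonical hub matching $M_0$ chosen outside all exceptional families and, from any non-exceptional $M$, construct an explicit sequence of disjoint-compatible moves $M = M_1 \to M_2 \to \cdots \to M_0$. Progress would be tracked by a monovariant such as the number of edges shared with $M_0$, or a weighted length sum on the edges, that strictly improves at each step. The hardest part will be proving that a disjoint-compatible move realizing the required improvement always exists for non-exceptional $M$: one must perform a local surgery inside $M$ without producing a crossing, without re-using an edge of $M$, and without landing on an exceptional matching. This is where I expect the hypothesis $k \ge 9$ to enter, by guaranteeing enough room on the polygon for the surgery to avoid the boundary-heavy exceptional configurations; for smaller $k$ the exceptional families overlap or collide with the generic case, which is presumably why the classification is stated only for $k \ge 9$.
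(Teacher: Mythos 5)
There is a genuine gap, and it begins with the identification of the exceptional families. Your candidate for a small component, the ``boundary-heavy'' matching $\{P_1P_2,P_3P_4,\dots,P_{2k-1}P_{2k}\}$, is a \emph{ring}, and in the paper the rings are precisely the anchor of the \emph{big} component (indeed they are the vertices of maximum degree, Proposition~\ref{thm:ring_degree}). The actual small components are at the opposite extreme: for odd $k$ the isolated matchings are the I-matchings, built by recursively inserting nested blocks $\{P_iP_{i+3},P_{i+1}P_{i+2}\}$ starting from a single edge (the fully nested matching $\{P_1P_{2k},\dots,P_kP_{k+1}\}$ is the prototype), and for even $k$ the pairs are the DB-matchings. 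Moreover, your explanation of the multiplicity --- that the exceptional components arise as dihedral orbits of a few base examples --- cannot be right: a dihedral orbit has at most $4k$ elements, whereas the number of small components is $\frac{1}{\ell}\binom{4\ell-2}{\ell-1}$ for odd $k$ (already $612$ at $k=9$) and the number of medium components is likewise exponential in $k$. The exceptional families are parameterized by recursive combinatorial data (sequences of block insertions, sign vectors $\chi$), not by polygon symmetries, and establishing that they exhaust all small and medium components requires the degree-preservation mechanism of blocks (Proposition~\ref{thm:block_keeps_degree}) together with a converse classification (Theorems~\ref{thm:small_odd_struct}, \ref{thm:small_even_struct}), none of which your plan supplies.

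The connectivity step also has a concrete missing idea. The paper does not use a monovariant toward a hub; it inducts on $k$ by removing a separated pair $K$ from $M$, applying the induction hypothesis to $M-K$ while $K$ \emph{oscillates} between block and antiblock, and this creates a parity obstruction: after reassembly the oscillating pair may sit in the wrong phase relative to the ring. Resolving this requires proving that the ring component is not bipartite for $k\geq 8$ (Proposition~\ref{thm:non_bip}), so that one can adjust the parity of the path length --- and this, not ``room for surgery,'' is where the threshold on $k$ genuinely enters (the ring component \emph{is} bipartite for $k\leq 7$, which is why extra isomorphism classes appear there). Your proposed monovariant (edges shared with $M_0$) is not shown to be improvable by a single disjoint-compatible move, and since a move must flip an entire flippable partition of $M$ simultaneously (Proposition~\ref{thm:flip}), local surgeries that increase overlap with a fixed target while leaving the rest of $M$ intact are generally unavailable. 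A second omission: one must also handle the case where $M-K$ itself lands in an exceptional family, which the paper treats separately (Proposition~\ref{thm:big_proof}); without that case analysis the induction does not close.
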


In accordance to the orders, we call the components \emph{small}, \emph{medium} and \emph{big}.
The components of $\dcm_k$ follow different regularities for odd and for even values of $k$,
as specified in the next two theorems.
In fact, some of these regularities also hold for smaller values of $k$,
and thus we extend this notation for all values of $k$.
Namely, the components of the smallest order are called \emph{small};
the components of the next order are called \emph{medium};
all other components are called \textit{big}.
It was found by direct inspection and by a computer program that for $1 \leq k \leq 8$
the number of isomorphism classes of the components of $\dcm_k$ is 
as follows:

\medskip

\begin{center}
\begin{tabular}{c|cccccccc}
$k$   & $1$ & $2$ & $3$ & $4$ & $5$ & $6$ & $7$ & $8$    \\ \hline
Number of isomorphism classes \\ of the components of $\dcm_k$ & $1$ & $1$ & $2$ & $2$ & $3$ & $3$ & $4$ & $4$ \\ 
\end{tabular}
\end{center}

\medskip

However, as stated in Theorem~\ref{thm:main_theorem},
for all $k\geq 9$, $\dcm_k$ has components of exactly three kinds:
several small components, several medium components, and one big component.

\textit{Throughout the paper, we denote $ \ell = \left\lceil \frac{k}{2} \right\rceil$.}

\begin{theorem}\label{thm:main_odd}
Let $k$ be an odd number, $ \ell = \left\lceil \frac{k}{2} \right\rceil$.
\begin{enumerate}
  \item
  The small components of $\dcm_k$ are isolated vertices. \\
  The number of such components is $\frac{1}{\ell}\binom{4\ell-2}{\ell-1}$.
  \item
  For $k \geq 3$, the medium components of $\dcm_k$ are stars of order
  $\ell$ (that is, $K_{1, \ell-1}$). \\
  For $k \geq 5$, the number of such components is $(2\ell-1) \cdot 2^{\ell-3}$.
\end{enumerate}
\end{theorem}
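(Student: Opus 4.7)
The plan is to first translate adjacency in $\dcm_k$ into a condition on non-crossing partitions of $X_{2k}$, and then to analyse isolated matchings (part~1) and centres of medium components (part~2). I would first establish that $M'$ is disjoint compatible to $M$ if and only if $M \cup M'$ is a non-crossing $2$-regular graph; such a graph decomposes into vertex-disjoint non-crossing even cycles of length $\geq 4$, whose edges alternate between $M$ and $M'$ along each cycle. Consequently, the neighbours of $M$ in $\dcm_k$ correspond bijectively to non-crossing partitions $\Pi$ of $X_{2k}$ into even blocks $B$ of size $\geq 4$ such that $M|_B$ is one of the two alternating perfect matchings along the boundary of $\mathrm{conv}(B)$; once $\Pi$ is fixed, $M'$ is uniquely determined as the complementary alternating matching on each block.

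For part~1, $M$ is isolated exactly when no such $\Pi$ exists. I would prove a recursive criterion: a valid partition forces, on the inner arc of every $M$-edge of each block, either emptiness or an arc of size $\geq 4$ on which $M$ restricts to a partitionable sub-matching. Unfolding the recursion, isolated matchings are characterised by a structural condition on $M$'s nested edge tree, essentially that no admissible ``alternating window'' can be carved out at any level. Identifying this family with a class of ordered plane trees on $4\ell - 2$ leaves with a prescribed branching constraint, a Fuss--Catalan-type generating function or a direct cycle-lemma bijection gives the count $\tfrac{1}{\ell}\binom{4\ell - 2}{\ell - 1}$. The odd parity of $k$, equivalent to $2k \equiv 2 \pmod{4}$, is crucial: it rules out certain balanced two-block partitions and forces precisely this closed form.

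For part~2, I would identify the centre of each star $K_{1, \ell - 1}$ as a matching $M$ containing a distinguished ``long'' edge $e$ that splits $X_{2k}$ into two arcs of size $2\ell - 2$ each, with $M$ restricted to each arc forming a rainbow-like structure. Each of the $\ell - 1$ valid partitions then arises from a unique ``flip window'' that carves out a block within one of the two arcs; verifying that the resulting neighbour has $M$ as its sole disjoint compatible partner, via the same recursive criterion as in part~1, produces the star. The count $(2\ell - 1) \cdot 2^{\ell - 3}$ factors as $2\ell - 1 = k$ rotational placements of $e$ modulo the centre's reflective symmetry, times $2^{\ell - 3}$ independent binary choices in the finer structure of the arcs outside the flip windows.

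The principal obstacle is the combinatorial core of part~1: pinning down the exact recursive obstruction for isolated matchings and showing that the resulting enumeration matches the stated closed form. Once this classification is in hand, part~2 proceeds by a more constrained analysis of the admissible configurations near the distinguished long edge, and the star property follows by reducing each ``leaf'' matching to an isolated sub-configuration covered by part~1.
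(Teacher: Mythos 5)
Your starting point---recasting adjacency via alternating cycles, equivalently non-crossing partitions of $X_{2k}$ into even blocks of size $\geq 4$ on which $M$ is alternating---is exactly the paper's Observation~\ref{thm:def_cycles} and Proposition~\ref{thm:flip}, and is correct. But three genuine gaps remain. First, the ``recursive obstruction'' characterizing isolated matchings is the crux, and you leave it unspecified (you name it yourself as the principal obstacle). The paper resolves it with the block/antiblock machinery: a matching with no block has at least two neighbours and one with exactly one block has at least one (Proposition~\ref{thm:few_blocks}), while inserting a block preserves the degree exactly (Proposition~\ref{thm:block_keeps_degree}); hence the isolated matchings are precisely those built from a single edge by repeated block insertion. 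The enumeration then does not come from one Fuss--Catalan family directly, but from locating a unique ``special red edge'' that splits such a matching into three non-crossing partitions into quadruples, yielding the generating function $xg^{3}(x)$ with $g=1+xg^{4}$ and an application of Lagrange inversion; your remark that odd parity ``rules out balanced two-block partitions'' plays no role in the actual argument. Second, your structural guess for the star centres is wrong: they are not matchings with a distinguished diameter edge flanked by rainbow-like arcs, but the caterpillar-shaped DBD-matchings (strip pattern $\mathrm{DBDB}\dots\mathrm{DBD}$), whose $\ell-1$ neighbours arise from choosing which of the $\ell-1$ triangular faces is flipped as a triple; moreover the leaves of the star are degree-one L-matchings, not isolated sub-configurations, so your proposed reduction of the leaves to part~1 does not apply.

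Third, and most seriously, nothing in your plan shows that these components are the \emph{small} and the \emph{medium} ones, i.e.\ that every matching outside these two families lies in a component of strictly larger order. That is the content of the paper's Section~\ref{sec:big}: every ``regular'' matching is connected by a path to the rings (Theorem~\ref{thm:ring_component}), which in turn requires the non-bipartiteness of the ring component (Proposition~\ref{thm:non_bip}) and a case analysis of decompositions $M=L+K$ with $L$ special (Proposition~\ref{thm:big_proof}), plus the order comparison of Proposition~\ref{thm:big}. Without this, exhibiting isolated vertices and stars only shows that \emph{some} components have orders $1$ and $\ell$; it does not establish the classification the theorem asserts.
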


\begin{theorem}\label{thm:main_even}
Let $k$ be an even number, $ \ell = \left\lceil \frac{k}{2} \right\rceil$.
\begin{enumerate}
  \item
  The small components of $\dcm_k$ are pairs (that is, components of order $2$). \\
  The number of such components is $\ell \cdot 2^{\ell-1}$.
  \item
  For $k \geq 4$, the medium components of $\dcm_k$ are of order $6\ell-6$.\footnote{
  The structure of the medium components for even $k$ will be described below, in Corollary~\ref{thm:middle_even_struct}.} \\
  For $k \geq 6$, the number of such components is $\ell \cdot 2^{\ell-2}$.
\end{enumerate}
\end{theorem}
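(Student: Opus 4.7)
The plan is to handle the small and medium components separately. In each case I would first identify the combinatorial family of matchings that constitutes these components, then verify by a direct local analysis that they form components of the claimed order, and finally count them.

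For the small components, I would characterize matchings $M$ of degree $1$ in $\dcm_k$. Every edge of $\dcm_k$ at $M$ can be read off the cycle decomposition of the $2$-regular graph $M \cup M'$ on $X_{2k}$: each alternating cycle has even length $\geq 4$, and a $4$-cycle corresponds to a local ``flip'' of two $M$-edges $\{P_a P_b, P_c P_d\}$ (with $a<b<c<d$ cyclically) to $\{P_b P_c, P_d P_a\}$. My conjecture is that the unique-flip matchings form a family parametrized by a rotational position (contributing the factor $\ell$) together with a binary string of length $\ell - 1$ (contributing $2^{\ell-1}$), and that the flip acts as an involution pairing them — yielding exactly $\ell \cdot 2^{\ell-1}$ small components, each of order $2$.

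For the medium components, the stated order $6\ell - 6 = 6(\ell-1)$ suggests a local structure built on $\ell - 1$ positions with $6$ configurations apiece. I would pick a canonical ``starter'' matching in each medium component, enumerate its disjoint compatible neighbors, iterate the local enumeration, and verify that it terminates after exactly $6\ell - 6$ matchings. A parametrization by a rotational position (a factor of $\ell$) and a binary sequence of length $\ell - 2$ (a factor of $2^{\ell-2}$) should yield the count $\ell \cdot 2^{\ell-2}$. The detailed description of the resulting graph is the content of the forthcoming Corollary~\ref{thm:middle_even_struct}, and my analysis should produce it as a byproduct.

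The main obstacle in both parts is \emph{exhaustiveness}: showing that no matching outside the described families belongs to a component of small or medium order. I would handle this by induction on $\ell$ together with an ear-removal reduction (contracting an edge $P_i P_{i+1}$ of $M$ to obtain a matching of $X_{2k-2}$), showing that any matching with ``enough'' structural variety — for instance two well-separated ears, or a specific mixed pattern of short and long edges — admits strictly more than $6\ell - 7$ distinct disjoint compatible neighbors and hence lives in a larger component. The even-vs-odd asymmetry, which makes small components pairs rather than isolated vertices as in Theorem~\ref{thm:main_odd}, should ultimately trace to a parity feature of the flip operation on rigid matchings — namely, a canonical unavoidable flip is forced in the even case and absent in the odd case. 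Identifying and formalizing this parity feature is what I expect to be the technical heart of the proof.
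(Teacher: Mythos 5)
There are two genuine gaps here. First, you conflate ``matchings of degree $1$'' with ``matchings lying in components of order $2$''. A component of order $2$ requires \emph{both} of its vertices to have degree $1$, and the set of degree-$1$ matchings (the L-matchings of Section~\ref{sec:leaves}) is vastly larger than the set of matchings in pairs: for even $k$ there are $\frac{\ell+1}{3\ell+1}\binom{4\ell}{\ell}=\Theta^*\left(9.48^{\ell}\right)$ leaves, but only $\ell\cdot 2^{\ell}$ matchings belonging to pairs (the DB-matchings), so your conjectured parametrization of the ``unique-flip matchings'' by roughly $\ell\cdot 2^{\ell-1}$ objects cannot be correct, and the involution picture breaks down: most leaves have a unique neighbor of large degree and sit inside the big component. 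The step your plan does not address is the converse direction of Theorem~\ref{thm:small_even_struct}, namely that a matching belonging to a pair must be a DB-matching; the paper proves this by induction, removing a block $K$, invoking Proposition~\ref{thm:block_anti_connected} to force $M-K$ to be a DB-matching, and then running a case analysis on where $K$ attaches in the dual tree.

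Second, your exhaustiveness strategy --- showing that every matching outside the described families admits ``strictly more than $6\ell-7$ distinct disjoint compatible neighbors'' --- cannot work, because the order of a component is not controlled by vertex degrees. The big component contains many vertices of degree $1$ (all L-matchings other than the DB- and EDBL-matchings), so no lower bound on degrees can certify membership in a big rather than a medium component. What is actually needed is a global connectivity statement: every regular matching is joined by a path to a ring (Theorem~\ref{thm:ring_component}). Proving this requires machinery absent from your outline, notably the non-bipartiteness of the ring component (Proposition~\ref{thm:non_bip}), which is what allows a separated pair to oscillate for an even number of steps while the remainder is driven to a ring, and the decomposition lemma (Proposition~\ref{thm:big_proof}) handling the case where removing a separated pair leaves a special matching. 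Your medium-component plan is likewise only a restatement of the goal: it does not identify the EDB-matchings, nor explain why the neighbor enumeration closes up after exactly $6\ell-6$ vertices, which in the paper comes from the explicit neighbor classification in Proposition~\ref{thm:ebd_neighbors} and Corollary~\ref{thm:middle_even_struct}.
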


The enumerational results from these theorems,
and exceptional values observed for small values of $k$,
are summarized in Tables~\ref{tab:odd} and~\ref{tab:even}.
As mentioned above, for $k=7$ and for $k=8$ two big components are of different order.

\renewcommand{\arraystretch}{1.5}
\begin{table}[h]
\centering
\begin{tabular}{|c||c|c|c|c|c|c|c||c|}
  \hline
$k$   & $1$ & $3$ & $5$ & $7$ & $9$ & $11$ & $\dots$ & General formula  \\ \hline
$\ell = \frac{k+1}{2}$  & $1$ & $2$ & $3$ & $4$ & $5$ & $6$ & $\dots$ &  \\ \hline \hline
Small components: order   & $1$ & $1$ & $1$ & $1$ & $1$ & $1$ & $\dots$ & $1$ \\ \hline
Small components: number  & $1$ & $3$ & $15$ & $91$ & $612$ & $4389$ & $\dots$ & $\frac{1}{\ell}\binom{4\ell-2}{\ell-1}$ \\  \hline
Medium components: order   & $-$ & $2$ & $3$ & $4$ & $5$ & $6$ & $\dots$ & $\ell$ \ (for $\ell\geq 2$) \\ \hline
Medium components: number & $-$ & $1$ & $5$ & $14$ & $36$ & $88$ & $\dots$ & $(2\ell-1) \cdot 2^{\ell-3}$ \ (for $\ell\geq 3$)\\ \hline
\end{tabular}
\caption{The summary of enumerational results for odd $k$ (Theorem~\ref{thm:main_odd}).}
\label{tab:odd}
\end{table}
\begin{table}[h]
\centering
\begin{tabular}{|c||c|c|c|c|c|c|c||c|}
  \hline
$k$   & $2$ & $4$ & $6$ & $8$ & $10$ & $12$ & $\dots$ & General formula  \\ \hline
$\ell = \frac{k}{2}$  & $1$ & $2$ & $3$ & $4$ & $5$ & $6$ & $\dots$ &  \\ \hline \hline
Small components: order   & $2$ & $2$ & $2$ & $2$ & $2$ & $2$ & $\dots$ & $2$ \\ \hline
Small components: number  & $1$ & $4$ & $12$ & $32$ & $80$ & $192$ & $\dots$ & $\ell\cdot 2^{\ell-1}$ \\  \hline
Medium components: order   & $-$ & $6$ & $12$ & $18$ & $24$ & $30$ & $\dots$ & $6 \ell - 6$ \ (for $\ell\geq 2$) \\ \hline
Medium components: number & $-$ & $1$ & $6$ & $16$ & $40$ & $96$ & $\dots$ & $\ell\cdot 2^{\ell-2}$ \ (for $\ell\geq 3$) \\  \hline
\end{tabular}
\caption{The summary of enumerational results for even $k$ (Theorem~\ref{thm:main_even}).}
\label{tab:even}
\end{table}
\renewcommand{\arraystretch}{1}

As stated in Theorem~\ref{thm:main_theorem}, for $k\geq 9$ there is only one big component.
Thus, its order is the number of vertices that do not belong to small and medium components.
In 
Proposition~\ref{thm:big}
we will show that the order of the
big component is indeed larger than that of medium or small components.

\subsection{Background and motivation}\label{sec:background}

The general notion of disjoint compatibility graphs was defined by
Aichholzer et al.~\cite{aichholzer} for sets of $2k$ points in general
(not necessarily convex) position. While they showed that for odd $k$
there exist isolated matchings, they posed the
\emph{Disjoint Compatible Matching Conjecture} for even $k$:
For every non-crossing matching of even size,
there exists a disjoint compatible non-crossing matching.
This conjecture was recently answered in
the positive by Ishaque~et~al.~\cite{ishaque}. In that paper it was
stated that for even $k$ ``it remains an open problem whether [the
disjoint compatibility graph] is always connected.''  It follows from
our results that for sets of $2k$ points in convex position,
$\dcm_{k}$ is \textbf{always} disconnected, with the exception of $k=1$ and $2$.

Both concepts, disjointness and compatibility, can be found in
generalized form for various geometric structures. For example, two
triangulations are compatible if one can be obtained from the other by
removing an edge in a convex quadrilateral and replacing it by the
other diagonal. This operation is called a flip and it is well known
that in that way any triangulation of the given set of $n$ points can
be obtained from any other triangulation of the same set
with at most $O(n^2)$ flips,
see e.~g.~\cite{Hurtado1999}.
Similar results exist, for example, for spanning trees~\cite{aah2002}
and between matchings and other geometric graphs~\cite{aght, houle}.

It is convenient to describe such results in terms of
\textit{reconfiguration graphs},
whose vertices correspond to all configurations under discussion,
two vertices being adjacent when the corresponding configurations
can be obtained from each other by certain operation (``reconfiguration'').
In these terms, the above mentioned result about
flips in triangulations can be stated as follows:
the flip graph of triangulations is connected with diameter $O(n^2)$.

Some kinds of reconfiguration graphs of non-crossing matchings were studied as well.
Hernando~et~al.~\cite{hernando} studied graphs of 
non-crossing perfect matchings of $2k$ points in \textbf{convex} position 
with respect to reconfiguration of the kind
$M' = M - (a,b) - (c, d) + (b, c) + (d, a)$.
In particular, they proved that such a graph is $(k-1)$-connected and has diameter $k-1$,
and it is bipartite for every $k$.
Aichholzer~et~al.~\cite{aichholzer} considered graphs of 
non-crossing perfect matchings of $2k$ points in \textbf{general} position,
where the matchings are adjacent if and only if they are compatible (but not necessarily disjoint).
They showed that in such a graph
there always exists a path of length at most $O(\log k)$ between any two matchings. 
Hence, such graphs are connected with diameter~$O(\log k)$; 
lower bound examples with diameter
$\Omega(\log k/ \log \log k)$ were found by Razen~\cite[Section~4]{Razen2008}.

In general, the number of non-crossing matchings of a point set depends on its order type.
In contrast to the case of point sets in convex position,
for general point sets no exact bounds are known.
Sharir and Welzl~\cite{Sharir2006} proved that any set of $n$ points
has $O(10.05^n)$ non-crossing matchings.
Garc{\'i}a et~al.~\cite{garcia} showed that the number of non-crossing matchings is minimal
when the points are in convex position
(then, as mentioned above, the number of matchings is $C_{n/2} = \Theta^*( 2^n )$),
and constructed a family of examples with $\Theta^*(3^n)$ matchings.
In these papers, bounds for similar problems concerning
other geometric non-crossing structures
(triangulations, spanning trees, etc.) are also found.

A generalization for matchings are \textit{bichromatic matchings}. There the
point set consists of $k$ red and $k$ blue points, and an edge always
connects a red point to a blue point. It has recently been shown by
Aloupis et al.~\cite{aloupis} that the graph of compatible (but not
necessarily disjoint) bichromatic matchings is connected. Moreover,
the diameter of this graph is $O(k)$, see~\cite{abhpv2013}.
On the other hand, certain bichromatic point sets have only one bichromatic matching:
such sets were characterized in~\cite{br}.

\smallskip

From the combinatorial point of view,
non-crossing matchings of points in convex position are
identical to so called \emph{pattern links}.
Pattern links of size $k$ form a basis for Temperley-Lieb algebra
$\mathrm{TL}_k(\delta)$ that was first defined in~\cite{temperley}, and has
numerous applications in mathematical physics, knot theory, etc.
Pattern links also have a close relation with alternating sign matrices (ASMs),
fully packed loops (FPLs), and other combinatorial structures. For more
information see the survey article by Propp~\cite{propp}.
Di~Francesco et~al.~\cite{francesco} constructed a bijection between FPLs
with a link pattern consisting of three nested sets of sizes $a$, $b$
and $c$ and the plane partitions in a box of size $a \times b \times c$.
Wieland~\cite{wieland} proved that the distribution of link
patterns corresponding to FPLs is invariant under dihedral relabeling.
A connection between the distribution of link patterns of FPLs
and
ground-state vector of $O(1)$ loop model
from statistical mechanics
was intensively studied in the last years:
see, for example, a proof of
Razumov-Stroganov conjecture~\cite{razumov}
(which can be also expressed in terms of reconfiguration)
by
Cantini and Sportiello~\cite{cantini}.

\medskip

Thus, our contribution is twofold.
First, from the combinatorial point of view, we have structural results
that provide a new insight into combinatorics of non-crossing partitions.
Second, our work is a contribution to the study of straight-line graph drawings.
While it applies only to matchings of points in convex position, 
certain observations may be carried over or generalized for general 
sets of points, and, thus, they could be possibly useful for the study
of disjoint compatibility of geometric matchings in general.

\subsection{Outline of the paper.}\label{sec:plan}
The paper is organized as follows.
In Section~\ref{sec:definitions}
we introduce notion necessary for the proofs of the main theorems,
and prove some preliminary results.
One important notion there will be that of \textit{block}:
two edges that connect four consecutive points of $X_{2k}$, the first with the fourth, and the second with the third.
In particular, it will be observed that if a matching $M$ has a block,
then in any matching disjoint compatible to $M$ the points of the block can be reconnected in a unique way.
Thus, presence of blocks puts restrictions on potential matchings disjoint compatible to $M$.

In Section~\ref{sec:small} we describe certain kinds of matchings
and show that they belong to components of the smallest possible order
($1$ or $2$, depending on the parity of $k$).
In Section~\ref{sec:middle}, we describe other kinds of matchings,
and prove that, for fixed $k$, all the connected components that contain such matchings are isomorphic.
Enumerational results from these sections fit the rows of
Tables~\ref{tab:odd} and~\ref{tab:even} that correspond to medium components.
Finally, in Section~\ref{sec:big}, we prove that
for $k \geq 9$
all the matchings that do not belong to either of the kinds
from Sections~\ref{sec:small} and~\ref{sec:middle}, form one connected component of big order
(essentially, we prove that all such matchings are connected by a path
to so called \textit{rings}).
In particular, this implies that no other orders exist,
and that all the small and medium components are, indeed, described in Sections~\ref{sec:small} and~\ref{sec:middle}.
Thus, this accomplishes the proof of Theorems~\ref{thm:main_theorem},~\ref{thm:main_odd} and~\ref{thm:main_even}.
In the concluding Section~\ref{sec:conc}, we 
showing more enumerational results related to $\dcm$,
briefly discuss the case of ``almost perfect'' matchings of sets that have odd number of points,
and suggest several problems for future research.

\section{Further definitions and basic results}\label{sec:definitions}

\subsection{Flipping}\label{sec:flip}

If an edge of a matching connects two consecutive points of $X_{2k}$,
it is a \textit{boundary edge}, otherwise it is a \textit{diagonal edge}. 
(We regard $X_{2k}$ as a cyclic structure.
Thus, the points $P_{2k}$ and $P_1$ are also considered consecutive.
Moreover, the arithmetic of the labels will be modulo $2k$.
Yet we write $P_{2k}$ rather than $P_{0}$.)
In the matching $M_a$ in Figure~\ref{fig:chains_1},
the edges $P_3 P_8$ and $P_{13} P_{16}$ are diagonal edges,
and all other edges are boundary edges.
A pair of consecutive points not connected by an edge is a \textit{skip}.
For each $k\geq 2$ there are two matchings with only boundary edges,
which we call \textit{rings}.
Notice that the two rings are disjoint compatible to each other.

The definition of disjoint compatible matchings can be rephrased as follows.
\begin{observation}\label{thm:def_cycles}
Let $M$ and $M'$ be matchings of $X_{2k}$.
$M$ and $M'$ are disjoint compatible
if and only if
$M \cup M'$
is a union of pairwise disjoint cycles
that consist alternatingly of edges of $M$ and $M'$.
\end{observation}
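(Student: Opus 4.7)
The plan is to prove this observation by reading off both directions from the degrees of vertices in $M \cup M'$ together with non-crossing-ness as a planar property. I would treat each implication separately, but both rest on the same combinatorial skeleton.

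For the forward direction, I would first use disjointness. Since $M$ and $M'$ are both perfect matchings of $X_{2k}$, each vertex has degree $1$ in $M$ and degree $1$ in $M'$; since $M \cap M' = \emptyset$ as edge sets, the two contributions do not collapse, so every vertex has degree exactly $2$ in the multigraph $M \cup M'$. A $2$-regular graph is a vertex-disjoint union of simple cycles, and because the two incident edges at every vertex come one from $M$ and one from $M'$, the edges along each such cycle alternate between $M$ and $M'$ (in particular every such cycle has even length). Next I would use compatibility together with the fact that $M$ and $M'$ are individually non-crossing matchings: no two edges in $M \cup M'$ cross, so the whole union is a non-crossing (planar straight-line) drawing. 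Consequently each cycle is a simple polygon and distinct cycles do not cross each other, which is what the statement means by ``pairwise disjoint''.

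For the reverse direction I would simply run the same argument backwards. Alternation of $M$- and $M'$-edges along each cycle implies $M \cap M' = \emptyset$, i.e.\ disjointness; and the assumption that the cycles are pairwise (geometrically) disjoint, each being a simple alternating cycle, implies that no two edges of $M \cup M'$ cross, so in particular no edge of $M$ crosses an edge of $M'$, giving compatibility. The matchings $M$ and $M'$ individually inherit non-crossing-ness from the non-crossing character of $M \cup M'$, which is consistent with them being matchings in the sense used throughout the paper.

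I do not expect a real obstacle: the statement is essentially a restatement of the two defining conditions in graph-theoretic language. The only point worth stating explicitly in the write-up is the interpretation of ``pairwise disjoint'' as geometric (non-crossing) disjointness rather than merely vertex-disjointness, since vertex-disjointness of the cycles is already forced by $2$-regularity and so would be redundant. Because of this, I would keep the proof to a short paragraph, highlighting the degree argument for the cycle structure and a one-line appeal to the non-crossing condition for the planarity part.
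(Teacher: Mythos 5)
Your proposal is correct: the degree-$2$ argument (one incident edge from each of $M$ and $M'$ at every vertex, made distinct by disjointness) yields the alternating cycle decomposition, and compatibility together with the non-crossingness of $M$ and $M'$ individually gives the geometric disjointness of the cycles, with the converse read off in the same way. The paper states this as an unproved Observation accompanied only by an illustrative figure, so your write-up simply supplies the routine justification the authors leave implicit; your explicit remark that ``pairwise disjoint'' must be read geometrically (vertex-disjointness being already forced by $2$-regularity) is exactly the right point to make precise.
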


See Figure~\ref{fig:chains_1} for an example.

\begin{figure}[h]
$$\includegraphics[width=130mm]{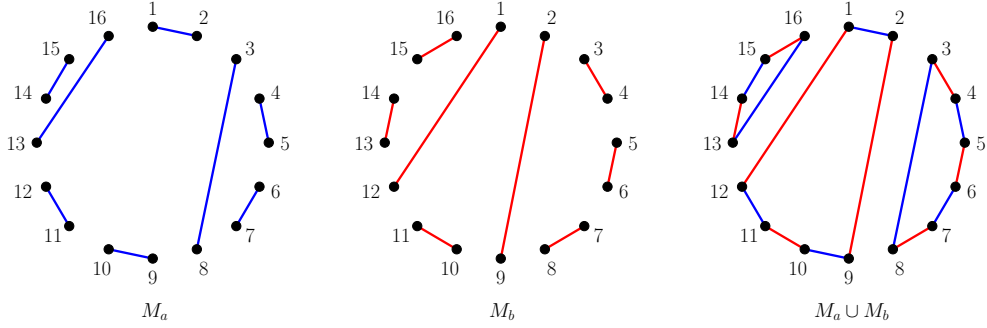}$$
\caption{The union of disjoint compatible matchings is a union of disjoint alternating cycles.}
\label{fig:chains_1}
\end{figure}

Let $M$ be a matching of $X_{2k}$, and let $Y$ be a subset of $X_{2k}$ of size $2m$ ($2 \leq m \leq k $)
whose members are labeled cyclically by $Q_1, Q_2, \dots, Q_{2m}$. 
(In other words, $Q_a=P_{i_a}$, and $\{i_1, i_2, \dots, i_{2m}\}$ is a subset of $\{1, 2, \dots, 2k\}$
with the induced cyclic order.)
If
$N=\{Q_1Q_2,$ $Q_3Q_4,$ $Q_5Q_6, \dots,$ $Q_{2m-3}Q_{2m-2},$ $Q_{2m-1}Q_{2m}\}$
is a subset of $M$,
and the convex hull of $Y$ does not intersect any other edge of $M$,
we say that $N$ is a \emph{flippable set}.
Replacing the set $N$ by the set
$N'=\{Q_2Q_3,$ $Q_4Q_5,$ $Q_6Q_7, \dots,$ $Q_{2m-2}Q_{2m-1},$ $Q_{2m}Q_{1}\}$
is a \emph{flip} of $N$.

\begin{proposition}\label{thm:flip}
Let $M$ and $M'$ be non-crossing matchings of $X_{2k}$.
$M$ and $M'$ are disjoint compatible
if and only if
there is a (uniquely determined) partition of $M$ into flippable sets with pairwise disjoint convex hulls
so that $M'$ is obtained from $M$ by flipping them.
\end{proposition}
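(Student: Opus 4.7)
The plan is to leverage Observation~\ref{thm:def_cycles} in both directions: a flippable partition of $M$ that produces $M'$ should correspond precisely to the alternating-cycle decomposition of $M \cup M'$.

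For the ``only if'' direction, assume $M$ and $M'$ are disjoint compatible and let $C_1,\dots,C_r$ be the alternating cycles of $M \cup M'$ given by Observation~\ref{thm:def_cycles}. The first key claim is that, for each cycle $C$, the cyclic order in which $C$ visits its vertices agrees with the cyclic order inherited from $X_{2k}$. This is because $C$ is a simple closed polygon in the plane---its edges are chords of the convex hull of $X_{2k}$ that are pairwise non-crossing, by non-crossingness of $M$ and $M'$ together with their mutual compatibility---and a simple polygon whose vertex set is in convex position must visit its vertices in convex-hull order. Labeling the vertices of $C$ as $v_1,v_2,\dots,v_{2m}$ in this order and choosing the start so that $v_1v_2 \in M$, the $M$-edges of $C$ are exactly $\{v_1v_2, v_3v_4,\dots, v_{2m-1}v_{2m}\}$ and the $M'$-edges are exactly $\{v_2v_3, v_4v_5,\dots, v_{2m}v_1\}$, so flipping the first set produces the second.

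The second key claim is that the sets $Y_C = \{v_1,\dots,v_{2m}\}$ really are flippable with pairwise disjoint convex hulls. Here I would use the following arc-region observation: for every edge $v_jv_{j+1}$ of $C$, the chord $v_jv_{j+1}$ partitions the remaining points of $X_{2k}$ into those strictly between $v_j$ and $v_{j+1}$ on the convex hull and those on the other side; non-crossingness of $M$ and $M'$ together with compatibility forces every other edge of $M \cup M'$ to lie entirely on one side. Applied to each edge of $C$ in turn, this shows that the convex hull of $Y_C$ meets no other edge of $M$; applied to a second cycle $C'$, it shows that all vertices of $C'$ fall into a single arc region of $C$, so the convex hulls of $Y_C$ and $Y_{C'}$ are disjoint.

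For the ``if'' direction, given a partition of $M$ into flippable sets $N_1,\dots,N_r$ with pairwise disjoint convex hulls $\mathrm{conv}(Y_i)$, and $M' = \bigcup_i N'_i$, I would verify: (a) $M$ and $M'$ share no edges, because any edge of $N'_i$ uses a vertex already covered inside $N_i$; (b) edges of $N'_i$ do not cross each other or the edges of $N_i$, since together they form the boundary of the convex polygon $\mathrm{conv}(Y_i)$; (c) edges of $N'_i$ do not cross edges of $M \setminus N_i$, by the flippable-set assumption; and (d) edges of $N'_i$ and $N'_j$ lie in disjoint convex hulls. Hence $M'$ is a non-crossing matching disjoint compatible with $M$. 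Uniqueness of the partition is automatic because any valid partition must coincide with the cycle decomposition of $M \cup M'$ viewed as a graph. The main obstacle is the geometric skeleton in the ``only if'' direction---the simple-polygon/arc-region step---since once the cyclic orders agree and the arc regions confine outside edges, everything else reduces to straightforward bookkeeping.
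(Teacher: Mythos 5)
Your proposal is correct and follows essentially the same route as the paper: both directions reduce to the alternating-cycle decomposition of $M\cup M'$ from Observation~\ref{thm:def_cycles}, with the flippable sets identified as the $M$-edges of each cycle and uniqueness following because the cycles are the connected components of $M\cup M'$. The paper states this in three sentences and leaves implicit the geometric facts you verify explicitly (that each cycle traverses its vertices in convex-position order and that the resulting convex hulls are pairwise disjoint and avoid the other edges of $M$), so your write-up is a more detailed version of the same argument rather than a different one.
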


\begin{proof} $[\Leftarrow]$ In such a case, $M\cup M'$ is a union of pairwise disjoint cycles
as in Observation~\ref{thm:def_cycles}.
$[\Rightarrow]$ Taking the edges of $M$ that belong to a cycle as in Observation~\ref{thm:def_cycles},
we obtain a flippable set.
Since these cycles are connected components of $M \cup M'$,
the partition of $M$ into flippable sets is uniquely determined by $M$ and $M'$.
Since the cycles are disjoint, these flippable sets have disjoint convex hulls.
\end{proof}

A partition as in Proposition~\ref{thm:flip} will be called a \textit{flippable partition}.
Notice that a flippable set can not always be extended to a flippable partition.
For example, the set $T=\{P_{1}P_{2}, P_{3}P_{8}, P_{13}P_{16}\}$
from the matching $M_a$
in Figure~\ref{fig:chains_1}
is a flippable set, but there is no flippable partition that contains this set
because there is no flippable set that contains $\{P_{14}P_{15}\}$
and doesn't cross $T$.

\subsection{Merging and splitting of matchings}\label{sec:merge}

In some cases we need to split a matching into two submatchings,
or to merge two matchings into one matching.
Let $L$ and $N$ be non-empty disjoint subsets (submatchings) of a matching $M$
so that their union is $M$,
and so that $L$ can be separated from $N$ by a line.
In such a case we write $M=L+N$, or $N=M-L$,
and say that $L+N$ is a \textit{decomposition} of $M$.
If we want to treat $L$ and $N$ as matchings of respective sets of points,
we need to indicate how the labeling of $M$ is split into,
or merged from the respective labelings of $L$ and $N$.
We formalize the merging of two matchings in the following way.
Let $L$ be a matching of $2r$ points $\{R_1, R_2, \dots, R_{2r}\}$,
and let $N$ be a matching of $2s$ points $\{S_1, S_2, \dots, S_{2s}\}$.
A matching $M$ obtained by \emph{insertion of $N$ into $L$ between the points $R_a$ and $R_{a+1}$}
is the matching of $2k=2r+2s$ points $P_1, P_2, \dots, P_{2k}$
obtained by relabeling (and putting in convex position) from
$R_1, R_2 \dots, R_{a}, S_1, S_2, \dots, S_{2s}, R_{a+1}, R_{a+2}, \dots, R_{2r}$ (in this order),
such that $P_i P_j$ is an edge if and only if the corresponding points are connected in $L$ or in $N$.
If $N$ is inserted into $L$ between $R_{2r}$ and $R_{1}$,
we have $2s+1$ possibilities to choose the point corresponding to $P_1$:
$R_1$ or either of the points $S_i$.
A similar procedure can be described for splitting a matching (we omit the details).

In some cases specifying the labeling upon merging or splitting will not be essential.
For example, in some proofs we split a matching $M$ into two submatchings $L$ and $N$,
modify both parts, and then merge them again.
In such a case we only need to make sure that when the parts are merged,
their vertices are labeled in the same way as before the splitting.
Assuming this convention, we mention the following obvious fact.
\begin{observation}\label{thm:split}
Let $M$ be a matching, and suppose that $L+N$ is its decomposition.
If $L'$ is a matching disjoint compatible to $L$,
and $N'$ is a matching disjoint compatible to $N$,
then $L'+N'$ is disjoint compatible to $M$.
\end{observation}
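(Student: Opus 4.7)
The plan is straightforward: verify the two defining conditions of disjoint compatibility (no common edges, no crossings) separately, exploiting the fact that the line separating $L$ from $N$ also separates the vertex sets, hence the convex hulls containing their edges.

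First, I would observe that in a decomposition $M = L + N$ each vertex of $X_{2k}$ belongs to exactly one of the two vertex sets that support $L$ and $N$ (since $M$ is a perfect matching and the separating line partitions the points). Consequently $L'$ is a matching on the same vertex set as $L$ (call it $V_L$) and $N'$ is a matching on the same vertex set as $V_N$, and the edges of $L'$ lie in $\mathrm{conv}(V_L)$ while those of $N'$ lie in $\mathrm{conv}(V_N)$. These two convex hulls are separated by the same line, so no edge of $L'$ shares a point with, or crosses, any edge of $N'$; in particular $L' \cup N'$ is itself a non-crossing matching.

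Next I would check disjointness from $M$. An edge of $L'$ has both endpoints in $V_L$, so if it coincided with an edge of $M$ it would have to coincide with an edge of $L$, contradicting the disjointness of $L'$ and $L$; symmetrically for edges of $N'$. For the crossing condition, an edge of $L'$ cannot cross any edge of $N$ (they lie on opposite sides of the separating line), and cannot cross any edge of $L$ by compatibility of $L'$ and $L$; again the symmetric argument handles $N'$.

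There is no genuine obstacle here -- the whole point of insisting that the decomposition be realized by a separating line is precisely to ensure that interactions between the two halves are trivial. The only thing to be careful about is the bookkeeping mentioned just before the observation, namely that when we re-merge we use the same labeling of $V_L$ and $V_N$ as in the original splitting, so that $L'$ and $N'$ really do refer to matchings on the correct point sets inside $X_{2k}$.
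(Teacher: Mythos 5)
Your proof is correct and fills in exactly the reasoning the paper leaves implicit: the paper states this as an ``obvious fact'' with no proof, relying on precisely the observation that the separating line confines $L,L'$ and $N,N'$ to disjoint convex regions so that all cross-interactions are trivial and the within-half conditions are inherited from the hypotheses. Your care about re-merging with the original labeling matches the convention the paper sets up just before the observation.
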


If we start with a matching $M_0$, and perform insertion several times
(each time the inserted matching, the place of insertion, and, if needed, the labeling are specified),
obtaining thus a sequence of matchings $M_1, M_2, \dots$,
then for each edge $e$ of $M_0$, each of the members of this sequence has an
edge corresponding to $e$ in the obvious sense.

\subsection{Combinatorial and topological matchings}\label{sec:gamma}
For the sets of points in convex position,
the notions of non-crossing matchings
and that of disjoint compatible matchings
are in fact purely combinatorial,
since being two edges crossing or non-crossing is completely determined by the labels of their endpoints.
Indeed, let $X_{2k}$ be just the set $\{1, 2, \dots, 2k\}$.
Two disjoint pairs of members of $X_{2x}$, $\{a_1,a_2\}$ and $\{b_1,b_2\}$, are \textit{crossing} if,
when ordered with respect to the usual cyclic order of $X_{2k}$,
they form a sequence of the form $abab$.
A \textit{combinatorial non-crossing matching} of $X_{2k}$ is its partition $M$ into $k$ disjoint non-crossing pairs.
Two such matchings, $M$ and $M'$, are disjoint compatible
if no pair belongs to them both,
and no pair from $M$ crosses a pair from $M'$.

Combinatorial non-crossing matchings can be represented not only by straight-line (``geometric'') drawings,
but also by more general ``topological drawings'', as follows.
Let $\mathbf{\Gamma}$ be a closed Jordan curve,
and let $X_{2k}=\{P_1, \dots, P_{2k}\}$
be a set of points that lie (say, clockwise) on $\mathbf{\Gamma}$ in this cyclic order.
Denote by $\mathbf{O(\Gamma)}$ the interior, that is, the region bounded by $\mathbf{\Gamma}$.
A \textit{topological non-crossing matching} is a set of $k$
non-intersecting Jordan curves that connect pairs of these points,
and whose interior lies in $\mathbf{O(\Gamma)}$.
Since $\mathbf{O(\Gamma)}$ is homeomorphic to an open disc (by the Jordan-Schoenflies theorem),
each topological non-crossing matching can be continuously transformed into a geometric non-crossing matching.
Notice, however, that (in contrast to geometric matchings)
two topological matchings (on the same $X_{2k}$ and $\mathbf{\Gamma}$)
that correspond to disjoint compatible combinatorial matchings
might have crossing arcs.

In what follows, by a (non-crossing) matching we usually mean either a combinatorial non-crossing matching as described above,
or any of its topological or straight-line representations.
When a specific kind of drawing should be considered, we will mention it explicitly.

\subsection{The map and the dual tree}\label{sec:dual}
Consider a topological non-crossing matching $M$ of size $k$.
Then the union of $\mathbf{\Gamma}$ and the members of $M$
form a planar map in $\mathbf{O(\Gamma)}$.
This map has $k+1$ faces.
The boundary of each face consists of one or several pieces of $\mathbf{\Gamma}$
and one or several edges of $M$.
Each edge belongs to exactly two faces.
A face that has more than one edge will be called an \textit{inner face};
a face that has exactly one edge (which is then necessarily a boundary edge) will be called a \textit{boundary face}.
Notice that any flippable set is a subset of the set of edges that belong to one (inner) face.

Consider the dual graph of this map, regarded as a combinatorial embedding
(that is, for each vertex $v$ the cyclic order $\phi(v)$ of edges incident to $v$ is specified)
with labeled \textit{edge sides}.
This graph $T$ is a tree: it is easy to see that $T$ is connected and
acyclic, as removal of any edge of $T$ disconnects it.
It will be called the \emph{dual tree} of $M$, and denoted by $D(M)$.
Since each edge of $D(M)$ crosses exactly one edge of $M$,
the points of $X_{2k}$ correspond to the edge sides of $D(M)$ in a natural way;
therefore, we use the indices of the points as labels of the edge sides.
The boundary edges of $M$ correspond
to the edges of $D(M)$ incident to leaves,
and, thus, there is also a clear correspondence of the boundary edges of $M$
to the leaves of $D(M)$.
The skips of $M$ correspond to the \textit{wedges} --
pairs of edges incident to a vertex $v$, consecutive in $\phi(v)$
(geometrically,
in case of straight-line drawing,
the wedges are angles formed by edges incident to the same vertex $v$,
with the center in $v$).
In Figure~\ref{fig:first_examples_graph_6}(a, b),
a matching $M$ (black) and its dual tree $D(M)$ (blue) are shown.
\begin{figure}[h]
$$\includegraphics[width=140mm]{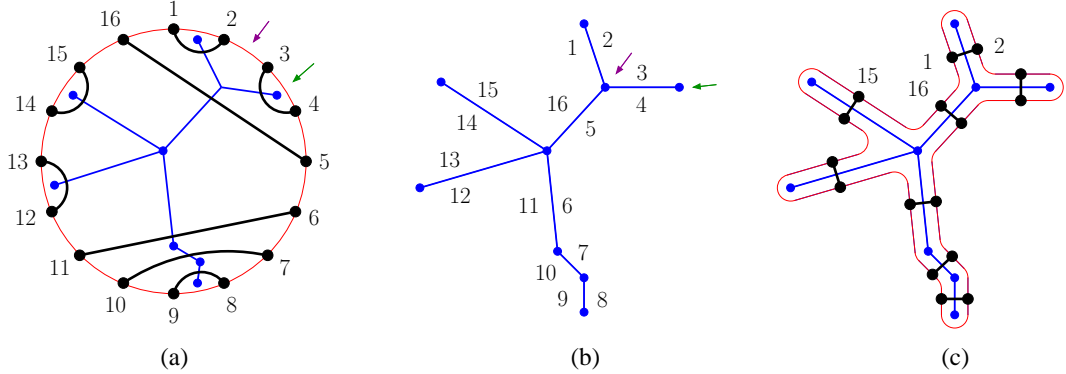}$$
\caption{(a) A matching.
(b) Its dual tree.
(c) Reconstructing the matching from its dual tree.}
\label{fig:first_examples_graph_6}
\end{figure}

Combinatorial embeddings of trees with $k+1$ vertices and one marked edge side
are in bijection with matchings of size $k$.
Notice that one marked edge side
(we use the label $1$ as the mark)
in such an embedding $T$ determines
a labeling of edge sides of $T$
by $\{1, 2, \dots, 2k\}$
that agrees with a cyclic ordering of edge sides
determined by a \textit{clockwise double edge traversal}.\footnote{In a double edge traversal,
each edge is visited twice: once for each direction.
After visiting an edge $e=v_1v_2$ from $v_1$ to $v_2$,
we visit the edge $v_2v_3$,
the successor of $e$ in $\phi(v_2)$,
from $v_2$ to $v_3$.
}
Figure~\ref{fig:first_examples_graph_6}(c) shows how, given such a combinatorial embedding of a tree $T$,
one can construct the matching $M$ such that $D(M)=T$.
First, we take a drawing of $T$
(for example, a straight-line drawing -- it is well-known that such a drawing always exists)
and slightly inflate its edges.
The boundary of the obtained shape is a closed Jordan curve $\mathbf{\Gamma}$,
it can be seen as a route of the double edge traversal.
For each edge of $T$, we put a point on $\mathbf{\Gamma}$ on each of its sides,
and connect such pairs by arcs.
As explained above, the edge sides of $T$ are labeled by $\{1, 2, \dots, 2k\}$.
The point that lies on the edge side $i$ will be labeled by $P_i$.
The set of arcs is now a non-crossing matching whose dual tree is $T$.
This topological matching can be converted now into
a straight-line matching of points in convex position as explained above.
Without a marked edge side, a combinatorial embedding determines a class of
\emph{rotationally equivalent} matchings,
that is,
matchings that can be obtained from each other by a cyclic relabeling of vertices.
We summarize our observations as follows.

\begin{observation}\label{thm:dual}
\mbox{}
\begin{enumerate}
\item The
correspondence $M \mapsto D(M)$
 is a bijection between
 combinatorial embeddings of trees with $k+1$ vertices and one marked edge side
and non-crossing matchings of size $k$.
\item Two non-crossing matchings, $M_1$ and $M_2$, have the same
non-labeled
dual tree if and only if they are rotationally equivalent.
\end{enumerate}
\end{observation}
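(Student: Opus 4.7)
The plan is to prove the two parts in sequence, relying on the constructions already sketched in the preceding paragraphs and making the bijection and its inverse precise.

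For part 1, I would first verify that $M \mapsto D(M)$ is well defined: the union $M \cup \mathbf{\Gamma}$ is a planar map with $k+1$ faces inside $\mathbf{O(\Gamma)}$, so its dual graph has $k+1$ vertices and $k$ edges; connectivity is clear, and acyclicity follows because each edge of $M$ separates two distinct faces, so removing the corresponding dual edge disconnects the dual. The planar embedding of $M$ induces a cyclic order $\phi(v)$ of edges around each dual vertex, and the edge sides inherit the labels $1,\dots,2k$ from the points of $X_{2k}$, so the edge side adjacent to $P_1$ can be taken as the marked one. For the inverse map, given a tree $T$ with combinatorial embedding and a marked edge side, I would use the inflation procedure described above: draw $T$, thicken it into a ribbon whose boundary is a Jordan curve $\mathbf{\Gamma}'$, and place $2k$ points on $\mathbf{\Gamma}'$, one on each edge side, labeled starting from the marked side in the cyclic order prescribed by the clockwise double edge traversal. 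Connecting the two points belonging to the same edge by a short arc inside the ribbon gives a topological non-crossing matching $M_T$.

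The two verifications $M_{D(M)} = M$ and $D(M_T) = T$ are then essentially formal. In one direction, each dual edge of $D(M)$ was obtained by crossing a unique edge of $M$, so reinflating $D(M)$ and drawing the arcs produces (up to isotopy) exactly $M$; moreover the cyclic order of edge sides around a dual vertex is, by the definition of a combinatorial embedding, precisely the order of points of $X_{2k}$ on the boundary of the corresponding face of $M$, which is what the clockwise double edge traversal records. In the other direction, the faces of the inflation of $T$ are in canonical bijection with the vertices of $T$, and by construction each arc of $M_T$ crosses exactly one edge of $T$, so $D(M_T) = T$ both as abstract trees and as combinatorial embeddings; the marked edge side is preserved by construction.

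For part 2, forgetting the marked edge side in $T$ is the same as quotienting by the $2k$ possible choices of that mark. Given two choices of marked edge side, say at positions $1$ and $j$ in the clockwise double edge traversal, the resulting labelings of the points of $X_{2k}$ differ by the cyclic shift $P_i \mapsto P_{i+j-1 \bmod 2k}$, which is exactly a rotational relabeling in the sense defined in the text. By part 1 the matching is determined by the embedded tree together with the marked edge side, so two matchings have the same unlabeled dual tree if and only if their marked edge sides differ by a cyclic shift, which is the case if and only if they are rotationally equivalent.

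The main obstacle, and the only nontrivial point, is to match up the two cyclic orders carefully: the order of points of $X_{2k}$ on $\mathbf{\Gamma}$ on the one hand, and the order of edge sides of $T$ produced by the clockwise double edge traversal (reading off successors via $\phi$) on the other. This is the standard fact that a combinatorial embedding of a tree is the same data as a ribbon graph whose boundary is a single cycle of length $2k$, and once this identification is in place both the well-definedness of the inverse map and the isotopy arguments above become routine.
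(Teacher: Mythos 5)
Your proposal is correct and follows essentially the same route as the paper, which presents this statement as a summary of the immediately preceding discussion: the construction of $D(M)$ with edge sides labeled by the points of $X_{2k}$, and the inverse construction by inflating a drawing of the tree and placing one point on each edge side in the order given by the clockwise double edge traversal. Your additional care in checking that the two compositions are identities and that forgetting the mark corresponds exactly to cyclic relabeling is a faithful elaboration of what the paper leaves implicit.
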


\subsection{Blocks and antiblocks}\label{sec:basic}

\noindent\textbf{Definition.} Let $M$ be a matching of $X_{2k}$, $k\geq 2$.
\begin{enumerate}
  \item A \emph{block} is a pair of edges of $M$ of the form $\{P_i P_{i+3}, P_{i+1} P_{i+2}\}$.
  \item An \emph{antiblock} is a pair of edges of $M$  of the form $\{P_i P_{i+1}, P_{i+2} P_{i+3}\}$.
  \item A \emph{separated pair} is a block or an antiblock.
\end{enumerate}

For example, in the matching $M_a$ from Figure~\ref{fig:chains_1},
$\{P_{13}P_{16}, P_{14}P_{15}\}$ is a block,
and $\{P_{4}P_{5}, P_{6}P_{7}\}$ is an antiblock.
If we have a separated pair on points
$P_i, P_{i+1}, P_{i+2}, P_{i+3}$,
then they will be called, respectively,
the first, the second, the third, and the fourth points of the separated pair.
For a block $K=\{P_i P_{i+3}, P_{i+1} P_{i+2}\}$,
the edge $P_i P_{i+3}$ is the \textit{outer}, and the edge $P_{i+1} P_{i+2}$ is the \textit{inner} edge of $K$.\footnote{
A special case is $k=2$.
Consider $M=\{P_1P_2, P_3P_4\}$.
The whole matching is both a block and an antiblock.
For $M$ as a block, $P_2$ or $P_4$ can be taken as the first point.
For $M$ as an antiblock, $P_1$ or $P_3$ can be taken as the first point.
The case of $M=\{P_1P_4, P_2P_3\}$ is similar.
}
For $k>3$ two blocks in a matching are necessarily disjoint, while two antiblocks can share an edge.
The block $\{P_i P_{i+3}, P_{i+1} P_{i+2}\}$ and the antiblock $\{P_i P_{i+1}, P_{i+2} P_{i+3}\}$
are \textit{flips} of each other.
The special role of blocks is due to the following observation.

\begin{observation}\label{thm:obs_block_antiblock}
Let $M$ and $M'$ be two disjoint compatible matchings.
If $M$ has a block \\ $\{P_i P_{i+3}, P_{i+1} P_{i+2}\}$,
then $M'$ has an antiblock $\{P_i P_{i+1}, P_{i+2} P_{i+3}\}$.
\end{observation}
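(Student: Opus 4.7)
The plan is a short local case analysis exploiting the separating role of the outer edge $P_i P_{i+3}$ of the block. Since $P_i P_{i+3} \in M$, compatibility with $M$ forbids any edge of $M'$ from crossing it, so every edge of $M'$ has both endpoints on the same side of the chord $P_i P_{i+3}$ or else is incident to $P_i$ or $P_{i+3}$ themselves. The side of this chord that contains only $P_{i+1}$ and $P_{i+2}$ is what drives the whole argument: the partner of $P_{i+1}$ in $M'$ must lie in $\{P_{i+2}, P_i, P_{i+3}\}$, and the same holds for the partner of $P_{i+2}$.

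I would then enumerate the three possibilities for the partner of $P_{i+1}$ in $M'$. First, $P_{i+1}P_{i+2}$ is already an edge of $M$ (it is the inner edge of the block), so this choice violates disjointness. Second, if $P_{i+1}P_{i+3} \in M'$, then inside the region bounded by the arc $P_{i+1} \ldots P_{i+3}$ of $\mathbf{\Gamma}$ and the segment $P_{i+1}P_{i+3}$ the only point of $X_{2k}$ left is $P_{i+2}$; it has no available partner without crossing $P_{i+1}P_{i+3}$ (it can no longer use $P_i$ or $P_{i+3}$, either because $P_{i+3}$ is taken or because the edge would cross $P_{i+1}P_{i+3}$), contradiction. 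Third, if $P_{i+1}P_i \in M'$, then applying the same separation argument to $P_{i+2}$ leaves $P_{i+3}$ as the only feasible partner, so $M'$ contains both $P_iP_{i+1}$ and $P_{i+2}P_{i+3}$, which is exactly the claimed antiblock.

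The argument is essentially forced, so there is no real obstacle; the only delicate points are (i) phrasing the separation cleanly using Observation~\ref{thm:def_cycles} (or equivalently just the Jordan-curve picture of the chord) so that ``$P_{i+1}$ is trapped'' is rigorous, and (ii) the degenerate case $k=2$ mentioned in the footnote, where the whole matching is the block; there only one other matching exists, and it is the flip, which is the antiblock, so the statement holds trivially. Once these two points are handled, the three-case dichotomy above yields the observation immediately.
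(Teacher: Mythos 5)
Your proof is correct, but it takes a different route from the one in the paper. The paper's proof is a one-liner built on Proposition~\ref{thm:flip}: since $M'$ arises from $M$ by flipping a flippable partition, one only has to note that the sole flippable set of $M$ containing the inner edge $P_{i+1}P_{i+2}$ is the block itself (the outer edge $P_iP_{i+3}$ encloses it, so no larger face is available), and flipping that set produces exactly the claimed antiblock. You instead argue bare-hands from the definitions of disjointness and compatibility: the chord $P_iP_{i+3}\in M$ confines the $M'$-partners of $P_{i+1}$ and $P_{i+2}$ to $\{P_i,P_{i+1},P_{i+2},P_{i+3}\}$, disjointness kills $P_{i+1}P_{i+2}$, the non-crossing property of $M'$ kills $P_{i+1}P_{i+3}$ (since $P_{i+2}$ would then be forced onto $P_i$, crossing $P_{i+1}P_{i+3}$), and the remaining case forces the antiblock. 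Both rest on the same geometric fact that the outer edge of a block separates its inner edge from the rest of the configuration; the paper's version is shorter given the flippable-partition machinery already established, while yours is more elementary and self-contained, needing only Observation~\ref{thm:def_cycles}-level reasoning (indeed not even that) and handling the degenerate $k=2$ case explicitly. Your case analysis is complete and each step is sound, so there is no gap.
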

\begin{proof}
Consider a flippable partition of $M$.
The only flippable set of $M$ that contains the edge $P_{i+1} P_{i+2}$
is the block $\{P_i P_{i+3}, P_{i+1} P_{i+2}\}$.
Upon flipping, an antiblock on these points is obtained.
\end{proof}

Given a matching $M$ of size $k$,
we can obtain a matching of size $k+2$ by inserting a matching $K$ of size $2$.
When essential, we can use the rule of relabeling vertices as explained in Section~\ref{sec:merge}.
However, instead of specifying a labeling of $K$,
we say that we insert a block or an antiblock into $M$
in accordance to the shape formed by the edges corresponding to $K$ in $M+K$.

%
The definition of the dual tree and the
correspondence between elements of $M$ and $D(M)$
(explained before Observation~\ref{thm:dual})
allow to identify elements of $D(M)$ that correspond
to separated pairs.

\medskip

\noindent\textbf{Definition.}
Let $T$ be a combinatorial embedding of a tree.
\begin{enumerate}
  \item A \emph{$k$-branch} in $T$ is
a path $v_1v_2 \dots v_{k+1}$ of length $k$ whose one end ($v_{k+1}$) is a leaf in $T$,
and all the inner vertices ($v_2, v_3, \dots, v_k$)
have degree $2$.
A $k$-branch will be given by the list of its vertices, starting from $v_1$.
  \item A \emph{V-shape} in $T$ is a path $v_1v_2v_3$
  such that $v_1$ and $v_3$ are leaves in $T$,
  and the edge $v_2v_3$ follows the edge $v_2v_1$ in $\phi(v_2)$
  (in other words, $v_1v_2v_3$ is a wedge).
 A V-shape will be given by the list of its vertices in this order,
 corresponding to the clockwise double edge traversal: $v_1v_2v_3$.
\end{enumerate}

\begin{observation}\label{thm:block_antiblock_dual_elements}
Blocks in $M$ correspond to $2$-branches  in $D(M)$.
Antiblocks in $M$ correspond to V-shapes in $D(M)$.
\end{observation}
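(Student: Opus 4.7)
The proof is essentially an unpacking of the dictionary established in Section~\ref{sec:dual}, and the plan is to translate each feature of a separated pair into the corresponding feature of the dual tree.

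First I would recall, and freely use, the three correspondences built up before the statement:
(i) every edge of $M$ corresponds to an edge of $D(M)$ that crosses it;
(ii) boundary edges of $M$ correspond precisely to edges of $D(M)$ incident to a leaf;
(iii) a skip $\{P_j,P_{j+1}\}$ of $M$ corresponds to a wedge of $D(M)$ at the vertex dual to the face of the planar map whose boundary contains the arc of $\mathbf{\Gamma}$ from $P_j$ to $P_{j+1}$, and this wedge consists of the two edges of $D(M)$ that cross the two edges of $M$ flanking the skip.

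For the block statement, consider a block $\{P_iP_{i+3},P_{i+1}P_{i+2}\}$. The inner edge $P_{i+1}P_{i+2}$ is a boundary edge, so it is crossed by an edge of $D(M)$ incident to a leaf $v_3$ by (ii). The only face of the planar map whose boundary meets both edges of the block is the inner face $F$ enclosed by the outer diagonal, the inner boundary edge, and two tiny arcs of $\mathbf{\Gamma}$; this face is bounded by exactly two edges of $M$, so the dual vertex $v_2$ of $F$ has degree~$2$. The outer edge $P_iP_{i+3}$ is crossed by the second edge at $v_2$, whose other endpoint $v_1$ is the vertex dual to the face on the other side of $P_iP_{i+3}$. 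Hence $v_1v_2v_3$ is a $2$-branch. For the converse, a $2$-branch $v_1v_2v_3$ forces $v_3$ to be dual to a boundary face, hence to a boundary edge $P_{i+1}P_{i+2}$, and forces the face dual to $v_2$ to contain exactly two edges of $M$; together with the two arcs of $\mathbf{\Gamma}$ adjacent to $P_{i+1}$ and $P_{i+2}$ this leaves $P_iP_{i+3}$ as the only possible second edge, so the pair is a block.

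For the antiblock statement, consider $\{P_iP_{i+1},P_{i+2}P_{i+3}\}$. Both edges are boundary edges, so by (ii) they give two leaves $v_1$ and $v_3$ of $D(M)$. Between them there is a single skip $\{P_{i+1},P_{i+2}\}$; by (iii) this skip is a wedge of $D(M)$, and the two edges of this wedge are precisely the edges crossing $P_iP_{i+1}$ and $P_{i+2}P_{i+3}$, which are the edges to $v_1$ and $v_3$. Thus those two edges are consecutive in $\phi(v_2)$ for a common vertex $v_2$, and the clockwise order of the points on $\mathbf{\Gamma}$ forces the cyclic order in $\phi(v_2)$ to list $v_2v_1$ immediately before $v_2v_3$, matching the definition of a V-shape. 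Conversely, a V-shape $v_1v_2v_3$ gives two boundary edges whose crossings are consecutive at $v_2$, and the face dual to $v_2$ supplies an arc of $\mathbf{\Gamma}$ between their nearer endpoints, forcing those endpoints to be consecutive on $\mathbf{\Gamma}$; that is exactly the antiblock configuration.

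The main obstacle is bookkeeping rather than substance: one must verify that the clockwise/counter-clockwise conventions for $\mathbf{\Gamma}$, for $\phi(v)$, and for the double-edge traversal all line up so that the ordered triple $v_1v_2v_3$ in a V-shape really corresponds to the ordered quadruple $P_i,P_{i+1},P_{i+2},P_{i+3}$ in the antiblock, with no inadvertent reversal. Once that is checked on one small example (say the one already drawn in Figure~\ref{fig:first_examples_graph_6}) the argument carries through verbatim in general.
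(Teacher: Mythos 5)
Your proof is correct and matches the paper's intent: the paper states this as an Observation with no written proof, relying on exactly the dictionary (edges $\leftrightarrow$ dual edges, boundary edges $\leftrightarrow$ leaf edges, skips $\leftrightarrow$ wedges) from Section~\ref{sec:dual} that you unpack. Your verification of both directions for blocks and antiblocks is sound; nothing is missing.
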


Suppose now that $T$ is a combinatorial embedding of a tree,
and we want to add a $k$-branch or a V-shape to $T$.
The following convention will be adopted.
We say that an embedding $T'$ is obtained from $T$ by attaching
a $k$-branch $v_1v_2 \dots v_{k+1}$ to vertex $w$ of $T$ in the wedge $w_1ww_2$,
if
(1) $v_1=w$,
(2) the vertices $v_2, \dots, v_{k+1}$ are vertices of $T'$ but not of $T$,
and
(3) for $w$ in $T'$ we have $ww_1 \prec wv_2 \prec ww_2$ in $\phi(w)$.
We say that an embedding $T'$ is obtained from $T$ by attaching
a V-shape $v_1v_2v_3$ to vertex $w$ of $T$ in the wedge $w_1ww_2$,
if
(1) $v_2=w$,
(2) the vertices $v_1, v_{3}$ are vertices of $T'$ but not of $T$,
and
(3)
for $w$ in $T'$ we have $ww_1 \prec wv_1 \prec wv_3 \prec ww_2$ in $\phi(w)$.

\begin{observation}\label{thm:block_antiblock_dual_insertion}
Let $M$ be a matching.

Inserting a block (respectively, an antiblock) in $M$
between the points $P_{i}, P_{i+1}$ connected by an edge in $M$
corresponds to attaching
a $2$-branch (respectively, a V-shape)
to the leaf corresponding to this edge in $D(M)$.

Inserting a block (respectively, an antiblock) in $M$
between the points $P_{i}, P_{i+1}$ not connected in $M$
corresponds to attaching
a $2$-branch (respectively, a V-shape)
to the vertex
in the wedge corresponding to the skip
between $P_{i}$ and $P_{i+1}$ in $D(M)$.
\end{observation}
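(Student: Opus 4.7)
The plan is to verify the observation directly from the construction of $D(M)$ by analyzing how the planar map of $M \cup \mathbf{\Gamma}$ is locally modified by the insertion. The key point is that only \emph{one} face of the old map is affected, namely the face $f$ of $M$ whose boundary contains the arc of $\mathbf{\Gamma}$ from $P_i$ to $P_{i+1}$. All other faces, and hence all other vertices and edges of $D(M)$, remain untouched, so it suffices to describe the dual structure that replaces $f$.

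Let $Q_1, Q_2, Q_3, Q_4$ denote the four inserted points between $P_i$ and $P_{i+1}$, in this cyclic order on $\mathbf{\Gamma}$; a block contributes the edges $Q_1Q_4, Q_2Q_3$, and an antiblock contributes $Q_1Q_2, Q_3Q_4$, all drawn inside $f$. For the block, $f$ splits into three subfaces: a large $f_1$ bounded by the unchanged part of $\partial f$ and by $Q_1Q_4$, a middle $f_2$ between the two block edges, and a boundary subface $f_3$ inside $Q_2Q_3$. Dually, $f_2$ has degree~$2$, $f_3$ is a new leaf, and $f_1$ inherits every old incidence of $f$ plus a single new edge to $f_2$. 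Hence the vertex of $D(M)$ corresponding to $f$ acquires a length-$2$ path ending in a new leaf -- precisely a $2$-branch attached there. For the antiblock, $f$ splits into a large $f_1$ (bounded by $\partial f$ and both new edges) and two tiny boundary subfaces inside $Q_1Q_2$ and $Q_3Q_4$ respectively; dually $f_1$ inherits the old incidences of $f$ and acquires two new edges to two new leaves, giving exactly the attached V-shape.

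To finish, I would locate the attachment vertex and check the cyclic-order condition of the definition of ``attaching''. If $P_iP_{i+1}$ is an edge of $M$, then $f$ is the boundary face adjacent to it and corresponds to a leaf $\ell$ of $D(M)$; since a leaf has a trivial wedge and $f_1$ plays the role of $\ell$ in the new tree, the $2$-branch or V-shape is attached at $\ell$ as claimed. If instead $P_i, P_{i+1}$ is a skip, then $f$ is a face containing this arc in its boundary and corresponds to some vertex $w$ of $D(M)$; the arc itself is precisely a wedge at $w$, delimited by the edges of $D(M)$ with side labels $i$ and $i+1$. Since all new edges of the block/antiblock lie on the $P_i$-to-$P_{i+1}$ side of $f$, the new incidences at $w$ in the dual embedding sit strictly inside this wedge, matching the attachment convention.

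The main (and only delicate) obstacle is this last verification: showing that the new edges at the attachment vertex fall inside the correct wedge of the cyclic order $\phi(w)$. This follows from planarity together with the fact that the old incidences of $w$ come from portions of $\partial f$ that are unaffected by the insertion, so they appear in $\phi(w)$ in the same cyclic position as before. Once this is settled, Observation~\ref{thm:block_antiblock_dual_elements} already identifies the two added local configurations as a $2$-branch and a V-shape, completing the proof.
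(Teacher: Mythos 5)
Your proof is correct, and it is essentially the argument the paper has in mind: the paper states this as an Observation with no proof at all, offering only Figure~\ref{fig:inserting_dual_1} as an illustration, and your local analysis of how the single affected face of the map splits into subfaces (and how the new dual edges land inside the correct wedge of $\phi(w)$) is exactly the direct verification being left to the reader. Nothing is missing; if anything, your explicit treatment of the cyclic-order condition at the attachment vertex is more careful than what the paper records.
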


See Figure~\ref{fig:inserting_dual_1}:
$M$ is a matching of size $4$;
$M_a$ and $M_b$ are obtained from $M$ by inserting a block and, respectively, an antiblock
between $P_2$ and $P_3$ (not connected in $M$);
$M_c$ and $M_d$ are obtained from $M$ by inserting a block and, respectively, an antiblock
between $P_3$ and $P_4$ (connected in $M$).
\begin{figure}[h]
$$\includegraphics[width=165mm]{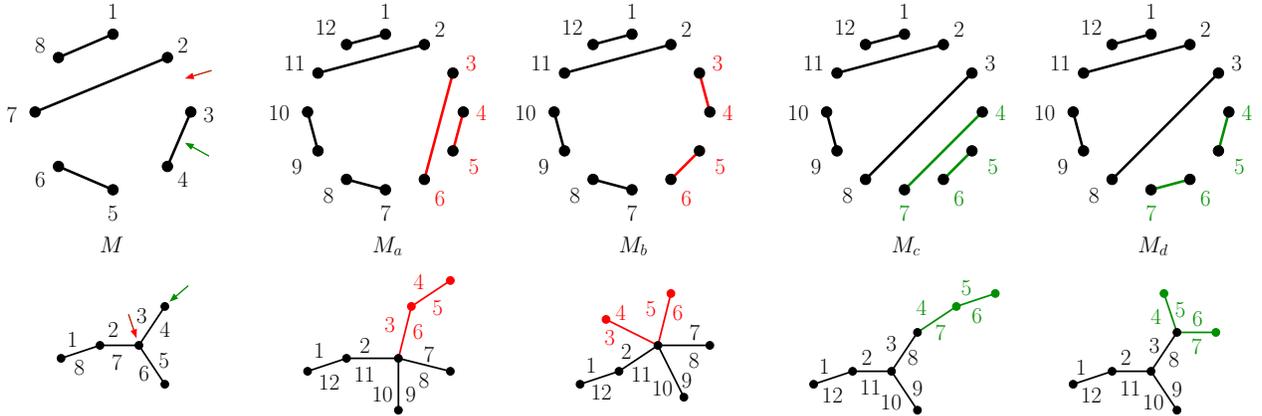}$$
\caption{Illustration to Observation~\ref{thm:block_antiblock_dual_insertion}.}
\label{fig:inserting_dual_1}
\end{figure}

\begin{proposition}\label{thm:exist_separated}
Let $M$ be a matching of size $k \geq 4$.
Then $M$ has at least two disjoint separated pairs.
\end{proposition}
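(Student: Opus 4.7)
The plan is to translate the statement to the dual tree $T = D(M)$. By Observation~\ref{thm:block_antiblock_dual_elements}, two disjoint separated pairs in $M$ correspond to two edge-disjoint substructures of $T$, each of which is either a $2$-branch or a V-shape. Since $M$ has size $k \geq 4$, the tree $T$ has $k+1 \geq 5$ vertices, and I want to exhibit two such substructures inside $T$.

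The key tool will be a diametral path $u_0 u_1 \dots u_d$ in $T$, together with the following local observation: at each end of this path there is a separated substructure that uses the edge $u_0u_1$ together with exactly one further edge incident to $u_1$. Indeed, by maximality of the diametral path every neighbor of $u_1$ distinct from $u_2$ must itself be a leaf. If $\deg(u_1) = 2$ then $u_2u_1u_0$ is a $2$-branch. Otherwise $\deg(u_1) \geq 3$ and the two cyclic neighbors of $u_1u_0$ in $\phi(u_1)$ cannot both equal $u_1u_2$, so at least one of them is an edge $u_1\ell$ with $\ell$ a leaf; this produces a V-shape with terminal leaves $u_0$ and $\ell$.

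Applying the observation at both endpoints of the diametral path yields two candidate substructures, and the remaining task is to guarantee that their edge sets are disjoint. For $d \geq 4$ the vertices $u_1$ and $u_{d-1}$ are distinct and non-adjacent, so disjointness is automatic. For $d = 3$ the only edge that could possibly be common is $u_1u_2$, and this occurs exactly when both substructures are $2$-branches; that would force $\deg(u_1) = \deg(u_2) = 2$ and make $T$ a path on four vertices, contradicting $|V(T)| \geq 5$. Hence in that case at least one endpoint yields a V-shape through a leaf different from $u_2$, and the two substructures are again edge-disjoint. Finally, if $d \leq 2$ then $T$ has diameter at most two, so $T$ is a star whose center has $k \geq 4$ incident leaves; picking two disjoint pairs of cyclically adjacent leaves at the center gives two edge-disjoint V-shapes.

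The step I expect to be the most delicate is the $d = 3$ case, since there the diametral path alone is too short to force the two candidate substructures apart. One has to exploit the extra vertex guaranteed by $|V(T)| \geq 5$ (which forces $\deg(u_1) \geq 3$ or $\deg(u_2) \geq 3$) in order to route at least one of the substructures through an auxiliary leaf and thereby avoid sharing the central edge $u_1u_2$.
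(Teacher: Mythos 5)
Your proof is correct and follows essentially the same route as the paper's: pass to the dual tree, dispose of the star (ring) case, take a diametral path, and extract a $2$-branch or V-shape at each end according to the degree of the penultimate vertex. The only difference is that you spell out the diameter-$3$ case in detail, which the paper dispatches in one sentence by noting that a shared edge would force $D(M)$ to be the path $v_1u_1u_2v_2$, impossible for $k \geq 4$.
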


\begin{proof}
If $M$ is a ring, the statement is clear.
Otherwise, $D(M)$ is not a star, and, thus, its diameter is at least $3$.
Let $v_1$ and $v_2$ be the leaves with the maximum distance in $D(M)$,
and let $u_1$ and $u_2$ be the vertices adjacent to them (respectively).
If $d(u_1)=2$, we have a $2$-branch in $D(M)$, and, therefore, a block in $M$.
If $d(u_1)>2$, we have a V-shape in $D(M)$,
and, therefore, an antiblock in $M$.
The same holds for $u_2$.
Since $u_1 \neq u_2$, these separated pairs are disjoint,
unless the whole $D(M)$ is the path $v_1u_1u_2v_2$.
But this situation is impossible since $k \geq 4$.
\end{proof}

\begin{proposition}\label{thm:block_keeps_degree}
Let $M$ be a matching of size $k$, and let $N=M+K$ where $K$ is a block.\footnote{
Since the place where $K$ was inserted is not specified, this means:
$N$ is some matching that can be obtained from $M$ by adding a block.
}
Then the degree of $N$ in $\dcm_{k+2}$ is equal to the degree of $M$ in $\dcm_{k}$.
\end{proposition}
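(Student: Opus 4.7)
The plan is to construct a degree-preserving bijection between the neighbors of $N$ in $\dcm_{k+2}$ and the neighbors of $M$ in $\dcm_{k}$. The key structural input is Observation~\ref{thm:obs_block_antiblock}, which forces any neighbor of $N$ to contain a prescribed antiblock on the four new points; once this is removed, what remains is precisely a neighbor of $M$.

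Concretely, write $K = \{P_iP_{i+3}, P_{i+1}P_{i+2}\}$ for the inserted block, and $K' = \{P_iP_{i+1}, P_{i+2}P_{i+3}\}$ for the corresponding antiblock. Let $N'$ be any matching disjoint compatible to $N$ in $\dcm_{k+2}$. By Observation~\ref{thm:obs_block_antiblock}, $K' \subseteq N'$. Hence $M' := N' \setminus K'$ is a matching of the remaining $2k$ points, which, under the obvious relabeling, is a matching on the same point set as $M$. I would then verify that $M'$ is disjoint compatible to $M$: the edges of $M'$ lie in $N'$ and the edges of $M$ lie in $N$, so any common edge of $M$ and $M'$ would be a common edge of $N$ and $N'$ (forbidden), and any crossing between an edge of $M$ and an edge of $M'$ would be a crossing between an edge of $N$ and an edge of $N'$ (also forbidden). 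This defines a map $\Phi \colon N' \mapsto M'$ from neighbors of $N$ to neighbors of $M$.

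For the inverse, given any $M'$ disjoint compatible to $M$, set $\Psi(M') = M' + K'$. Since $K'$ is disjoint compatible to $K$ (on four points in convex position, the block and the antiblock are the two non-crossing matchings, and they are clearly disjoint compatible), Observation~\ref{thm:split} applied to the decomposition $N = M + K$ yields that $M' + K'$ is disjoint compatible to $N$. It is immediate from the constructions that $\Phi \circ \Psi = \mathrm{id}$ and $\Psi \circ \Phi = \mathrm{id}$ (the latter using once more that Observation~\ref{thm:obs_block_antiblock} forces $K' \subseteq N'$, so nothing else in $N'$ touches the four block points).

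I do not anticipate any substantial obstacle; the whole argument is essentially a bookkeeping consequence of the two cited observations plus Observation~\ref{thm:split}. The only point worth stating carefully is that the four points of $K$ are ``sealed off'' in every neighbor $N'$ of $N$, so that $N'$ decomposes cleanly as (a neighbor of $M$) $+\, K'$; this is exactly what Observation~\ref{thm:obs_block_antiblock} provides.
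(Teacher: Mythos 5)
Your proposal is correct and follows essentially the same route as the paper: the paper's proof simply asserts that $M' \mapsto M' + K'$ (with $K'$ the antiblock on the points of $K$) is a bijection between the neighbors of $M$ and those of $N$, and your argument supplies exactly the verification of this claim, with Observation~\ref{thm:obs_block_antiblock} giving surjectivity/well-definedness of the inverse and Observation~\ref{thm:split} giving the forward direction.
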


\begin{proof}
The mapping $M' \mapsto M'+K'$, where $M'$ is a matching disjoint compatible to $M$,
and $K'$ is the antiblock that uses the same points as $K$,
is a bijection between
matchings disjoint compatible to $M$
and
matchings disjoint compatible to $N$.
\end{proof}

\begin{proposition}\label{thm:block_anti_connected}
Let $M$ be a matching of size $k$, and let $N=M+K$ where $K$ is a block or an antiblock.
If $M$ is connected (by a path) in $\dcm_{k}$ to $p$ matchings,
then $N$ is connected (by a path) in $\dcm_{k+2}$ to at least $p$ matchings.
\end{proposition}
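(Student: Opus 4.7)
The plan is to lift paths in $\dcm_k$ starting at $M$ to paths in $\dcm_{k+2}$ starting at $N$. Write $K^{\mathrm{flip}}$ for the flip of $K$: the antiblock on the same four consecutive points when $K$ is a block, and the block on those same four points when $K$ is an antiblock. The preliminary fact we need is that $K$ and $K^{\mathrm{flip}}$ are themselves disjoint compatible, which holds because their union is a single alternating $4$-cycle on those four points, and thus satisfies the criterion of Observation~\ref{thm:def_cycles}.

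Given any path $M = M_0, M_1, \dots, M_d$ in $\dcm_k$, I would set $N_i := M_i + K$ for even $i$ and $N_i := M_i + K^{\mathrm{flip}}$ for odd $i$, where in each case the insertion uses the same four consecutive points of $X_{2k+4}$ that realise $K$ inside $N$. By construction $N_0 = N$. For each consecutive pair, the decomposition of $N_i$ and $N_{i+1}$ into the $M_i$-part and the block/antiblock part satisfies the hypotheses of Observation~\ref{thm:split}: consecutive $M_i$'s are disjoint compatible by the given path, and $K$, $K^{\mathrm{flip}}$ are disjoint compatible by the preceding step. Hence $N_0, N_1, \dots, N_d$ is a valid walk in $\dcm_{k+2}$ starting at $N$.

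This construction yields a map $\varphi$ from the connected component of $M$ to that of $N$, sending each $M'$ to the endpoint of a chosen lifted path; the image $\varphi(M')$ is always one of $M'+K$ or $M'+K^{\mathrm{flip}}$, the precise option depending on the parity of the chosen path length. Two distinct matchings $M' \neq M''$ already differ on the $2k$ points outside the inserted quadruple, so their images differ regardless of which of the two completions is appended, and $\varphi$ is therefore injective. Consequently the component of $N$ has at least $p$ matchings. I do not anticipate a genuine obstacle: the only nontrivial input is the disjoint-compatibility of $K$ and $K^{\mathrm{flip}}$, and the rest reduces to Observation~\ref{thm:split}. The single subtle point worth flagging is that $\varphi$ is not canonically defined --- it depends on the chosen path --- but injectivity holds regardless of how one makes these choices.
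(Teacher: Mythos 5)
Your proposal is correct and follows essentially the same route as the paper: both lift a path from $M$ by appending $K$ or its flip in alternation (the paper keys the choice to the parity of $d(M,M')$, you to the parity of a chosen path, which you correctly note does not affect injectivity since the image determines $M'$ on the points outside the inserted quadruple), and both rest on Observation~\ref{thm:split} plus the disjoint compatibility of a block with its flip. No gaps.
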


\begin{proof}
Consider the mapping $M' \mapsto M'+K'$, where $M'$ is a matching connected by a path to $M$,
$K'=K$ if $d(M, M')$ is even,
and $K'$ is the flip of $K$ if $d(M, M')$ is odd.
It follows by induction on the distance
and by Observation~\ref{thm:split}
that for each $M'$,
the matching $M'+K'$ is connected by a path to $N$.
It is also clear that
this mapping is an injection.
\end{proof}

\section{Small components and vertices of small degree}\label{sec:small}

\subsection{General discussion}\label{sec:small_general}

A matching $M$ is \textit{isolated} if it is not disjoint compatible to any other matching of the same point set
(in other words, it corresponds to an isolated vertex of $\dcm_k$).
First we show that no isolated matchings of even size exists.\footnote{
As mentioned in the introduction, this claim also holds for matchings of points in general
(not necessarily convex) position~\cite[Theorem 1]{ishaque}.
However, since for the convex case the proof is very simple, we
present it here for completeness.}

\begin{proposition}\label{thm:even_never_isolated}
If $M$ is a matching of even size $k$, then there is at least one matching disjoint compatible to $M$.
\end{proposition}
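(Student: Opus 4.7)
The plan is to prove the proposition by induction on the even value of $k \geq 2$, peeling off separated pairs to reduce from size $k$ to size $k-2$.

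For the base case $k = 2$, the only two non-crossing matchings of $X_4$ are $\{P_1P_2, P_3P_4\}$ and $\{P_1P_4, P_2P_3\}$. They share no common edge and their edges do not cross, so each is disjoint compatible to the other, providing the required witness.

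For the inductive step with $k \geq 4$ even, I would first invoke Proposition~\ref{thm:exist_separated} to extract a separated pair $K$ from $M$. Next I would exhibit $M = M_0 + K$ as a decomposition in the sense of Section~\ref{sec:merge}, with $M_0$ a non-crossing matching of size $k - 2$. This requires showing that the two edges of $K$ can be separated from the remaining edges of $M$ by a straight line: if $K$ is a block, a line parallel to and slightly displaced from the outer edge of $K$ toward its inner edge does the job, since the rest of $M$ lies on the opposite side of the outer chord (as $M$ is non-crossing); if $K$ is an antiblock, the four consecutive points used by $K$ lie on a short convex arc and can be cut off from the remaining $2k - 4$ points by a line, which then separates the two boundary edges of $K$ from $M_0$. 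Since $M_0$ has even size $k - 2$, the induction hypothesis supplies a matching $M_0'$ disjoint compatible to $M_0$. Letting $K'$ denote the flip of $K$ (an antiblock if $K$ is a block, a block if $K$ is an antiblock), one checks immediately that $K$ and $K'$ are disjoint compatible. Observation~\ref{thm:split} then yields that $M_0' + K'$ is disjoint compatible to $M = M_0 + K$, completing the inductive step.

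The only step that requires genuine verification is the geometric claim that a separated pair can always be split off as a bona fide decomposition $M = M_0 + K$; once that is established, everything else is essentially bookkeeping via Observation~\ref{thm:split}, and I do not foresee further obstacles.
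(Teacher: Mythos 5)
Your proof is correct and follows essentially the same route as the paper's: extract a separated pair via Proposition~\ref{thm:exist_separated}, apply induction to the remainder, flip the pair, and recombine via Observation~\ref{thm:split}. The only quibble is in your extra geometric justification for the block case: the separating line should be displaced \emph{away} from the inner edge, not toward it, so that the endpoints of the outer edge stay on the same side as $P_{i+1}P_{i+2}$ --- a detail the paper does not even spell out.
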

\begin{proof} For $k=2$, the statement is obvious.
For $k \geq 4$: by Proposition~\ref{thm:exist_separated}, $M$ has a separated pair $K$.
Let $L = M - K$.
By induction, there exists a matching $L'$ disjoint compatible to $L$.
Now, $L'+K'$, where $K'$ is the flip of $K$,
is disjoint compatible to $M$ by Observation~\ref{thm:split}.
\end{proof}

In Section~\ref{sec:small_odd} we shall prove that for any odd $k$ there are isolated matchings of size $k$,
and in Section~\ref{sec:small_even} we shall prove that for any even $k$, $\dcm_k$ has connected components of size $2$.

First we derive certain situations in which a matching necessarily has at least one, or two,
disjoint compatible matchings.

\begin{proposition}\label{thm:few_blocks}
Let $M$ be a matching of size $k\geq 2$.
\begin{enumerate}
\item If $M$ has no blocks, then there are at least two matchings disjoint compatible with $M$.
\item If $M$ has exactly one block, then there is at least one matching disjoint compatible with $M$.
\end{enumerate}
\end{proposition}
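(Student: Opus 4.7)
The plan is to prove both parts jointly by strong induction on $k$, using decomposition of $M$ along a separated pair together with the degree-preservation under block insertion (Proposition~\ref{thm:block_keeps_degree}) and the existence of separated pairs (Proposition~\ref{thm:exist_separated}).

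For part~2, let $K$ be the unique block of $M$ and write $M = K + M'$, where $M'$ is the submatching on the remaining $2k-4$ points (separable from $K$ by a line since $K$'s four points are consecutive). By Proposition~\ref{thm:block_keeps_degree}, the degree of $M$ in $\dcm_k$ equals the degree of $M'$ in $\dcm_{k-2}$, so it suffices to show $M'$ has at least one DC matching. The technical step is to bound the block count of $M'$ in its reduced cyclic labeling. Blocks of $M'$ come in two types: those inherited from $M$ on points disjoint from $K$ (none here, since $M$'s only block is $K$), and ``wrap-around'' blocks at the seam where $K$ was removed. I would enumerate the three candidate wrap-around configurations and observe that each requires a specific form for the unique $M$-edge incident to the vertex $P_{i-1}$ (the point immediately preceding $K$), making them mutually exclusive. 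Hence $M'$ has at most one block, and the inductive hypothesis (part~1 if zero blocks, part~2 if one) produces a DC matching of $M'$.

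For part~1, since $M$ has no blocks, Proposition~\ref{thm:exist_separated} supplies two disjoint antiblocks $K_1, K_2$. Decomposing $M = K_i + M_i$ for $i = 1, 2$ and applying the same wrap-around analysis, each $M_i$ has at most one block, and the inductive hypothesis yields a DC matching $N_i$ of $M_i$. By Observation~\ref{thm:split}, combining with the unique DC of $K_i$ (the corresponding block $K_i^\#$) gives a DC matching of $M$. To ensure two such matchings are distinct, one compares the configurations at the $K_1$- and $K_2$-sites: the matching obtained via $K_1$ carries the block $K_1^\#$ at $K_1$'s four points, while the one obtained via $K_2$ carries $K_2^\#$ at $K_2$'s four points. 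When both matchings simultaneously bear $K_1^\#$ and $K_2^\#$, the distinguishing variation lies in the remaining $2k-8$ points, handled by a further decomposition and the inductive hypothesis applied to the smaller submatching $R = M \setminus (K_1 \cup K_2)$ (which has at most two wrap-around blocks and can in turn be reduced, preserving degree at each step by Proposition~\ref{thm:block_keeps_degree}).

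The hard part will be the careful bookkeeping of wrap-around blocks through the nested reductions: each block-peeling step may introduce new wrap-around blocks, and one must verify that the process terminates at a sufficiently large block-free submatching for the inductive hypothesis to apply. Base cases for small $k$ also need direct verification; in particular, ring matchings for very small odd $k$ (notably $k=3$, where the ring has only one DC matching) exhibit degenerate behavior falling outside the inductive scheme and must be handled individually.
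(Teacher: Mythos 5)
Your part~2 is essentially the paper's argument: peel off the unique block $K$, check that the remaining submatching has at most one (necessarily wrap-around) block at the seam, and apply the joint inductive hypothesis. That is sound. The gap is in part~1, where you diverge from the paper. The paper does not peel off antiblocks there: it picks a diagonal edge $e=P_iP_j$, splits $M$ into the submatching $M_1$ on the points inside $e$ and $M_2$ on the points outside, checks that each of $M_1$, $\{e\}\cup M_2$, $M_1\cup\{e\}$, $M_2$ has at most one block, and forms the two disjoint compatible matchings $M_1'+M_2''$ and $M_1''+M_2'$. These are distinct for free, because in the first the endpoints of $e$ are matched into the outside point set and in the second into the inside point set. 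Your construction has no such built-in invariant, and your distinctness step does not work.

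Concretely: let $M$ be the ring of size $4$, $\{P_1P_2,P_3P_4,P_5P_6,P_7P_8\}$, with disjoint antiblocks $K_1=\{P_1P_2,P_3P_4\}$ and $K_2=\{P_5P_6,P_7P_8\}$. Then $M-K_1=K_2$, whose unique disjoint compatible matching is the block $\{P_5P_8,P_6P_7\}$, so your first output is $\{P_1P_4,P_2P_3,P_5P_8,P_6P_7\}$; by symmetry your second output is exactly the same matching. The fallback you propose---recursing on $R=M\setminus(K_1\cup K_2)$---is vacuous here since $2k-8=0$, and for larger $k$ the same coincidence occurs whenever the matching built from $K_1$ happens to flip $K_2$ to its block and vice versa while the two agree elsewhere; nothing in your setup rules this out, and the recursion on $R$ (which may itself carry two wrap-around blocks, so that neither clause of the inductive hypothesis applies to it) does not manufacture a second, different matching. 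You need either the paper's diagonal-edge split or some other concrete feature forcing the two outputs apart. One further remark: your aside about the size-$3$ ring is sharper than you treat it. That ring has no blocks and exactly \emph{one} disjoint compatible matching (the other ring), so it is not a ``degenerate base case to handle individually'' but an actual failure of part~1 at $k=3$ as stated; the paper claims to verify $k=3$ directly, but it only ever invokes the two-neighbor conclusion for $k\ge 4$, and any correct write-up should restrict part~1 accordingly.
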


\begin{proof}
For $k=2, 3$, we verify this directly (for $k=2$ the statement holds in a trivial way).
For $k \geq 4$, we prove the statement by induction
(notice that the induction applies not to $1.$ and $2.$ separately,
but rather to the whole statement).

\begin{enumerate}
\item Suppose that $M$ has no blocks.
If $M$ is a ring, then the claim is clear.
So, we assume that there is a diagonal edge $e = P_iP_j$.
Let $M_1$ and $M_2$ be the submatchings of $M$
on point sets $Y_1=\{P_{i+1}, P_{i+2}, \dots, P_{j-1}\}$
and $Y_2=\{P_{j+1}, P_{j+2}, \dots, P_{i-1}\}$ (respectively).
Since $M$ has no blocks, both these submatchings are of size at least $2$.


Consider the submatching $M_1$. If it has a block $K$, then its first point can be only one of the points
$P_{j-3}, P_{j-2},$ and $P_{j-1}$, because otherwise $K$ would be also a block of $M$.
It follows that $M_1$ has at most one block.
Therefore, it is not isolated by induction.
Similarly, $\{e\}\cup M_2$ has at most one block (its first point can be only $P_{i-1}$),
and therefore, it is also not isolated.
Denote by $M'_1$ a matching disjoint compatible to $M_1$, and by $M''_2$ a matching disjoint compatible to $\{e\}\cup M_2$.
Then $M'_1 +  M''_2$ is disjoint compatible to $M$.

Similarly, the submatchings $M_1 \cup \{e\}$ and $ M_2$ are non-isolated,
and $M''_1 +  M'_2$,
the merge of their respective disjoint compatible matchings, is disjoint compatible to $M$.

Thus we obtained two matchings, disjoint compatible to $M$.
They are indeed distinct because in $M'_1 +  M''_2$
the endpoints of $e$ are connected to points from $Y_2$,
and in $M''_1 +  M'_2$ to points of $Y_1$.

\item Suppose that $M$ has exactly one block $K$.
Let $L=M-K$.
Similarly to the reasoning from the previous paragraph, $L$ has at most one block, and, thus,
it is not isolated by induction.
Therefore, $M$ is also not isolated by Observation~\ref{thm:split}.
\end{enumerate}
\end{proof}

\textit{Remark.} The statements of Proposition~\ref{thm:few_blocks} cannot be strengthened
as the examples in Figure~\ref{fig:3ex} (for both even and odd $k$) show.
The matching $M_a$ has no blocks, and it has exactly two disjoint compatible matchings.
The matching $M_b$ has exactly one block, and it has exactly one disjoint compatible matching.
In order to see that, notice that a disjoint compatible matching for $M_a$ or for $M_b$
is completely determined by deciding whether its antiblock(s)
form a flippable set alone, or together with an adjacent (vertical) edge.
\begin{figure}[h]
$$\includegraphics[width=100mm]{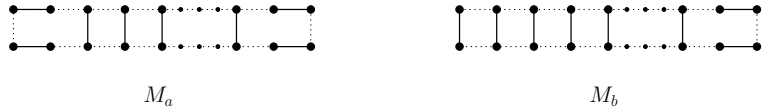}$$
\caption{$M_a$ has no block and exactly two disjoint compatible
  matchings. $M_b$ has one block and exactly one disjoint compatible matching.}
\label{fig:3ex}
\end{figure}

In the drawings in Figure~\ref{fig:3ex}, $\mathbf{\Gamma}$
is a rectangle, and all the edges of the matchings are
either horizontal segments that lie on the lower or on the upper side,
or vertical segments that connect these sides.
Such a representation will be called a \textit{strip drawing}.
Strip drawings are very convenient for representation of certain kinds of matchings,
and they will be used intensively in subsequent sections.
Notice that the fact that horizontal segments lie \textit{on} $\mathbf{\Gamma}$ is inconsistent with our definitions
(in particular, that of the dual graph),
but they can be easily adjusted.
For example, we can treat this drawing as schematic and imagine that the horizontal segments
are in fact slightly curved towards $\mathbf{O(\Gamma)}$.

\subsection{Small components for odd $k$ (Isolated Matchings)}\label{sec:small_odd}

In contrast to the even case,
for each odd $k$
there exist isolated matchings of size $k$.
It is mentioned in~\cite{aichholzer}
that the matchings rotationally equivalent to
$M=\{P_{1}P_{2k}, P_{2}P_{2k-1}, \dots, P_{k}P_{k+1}\}$
are isolated for odd $k$.
In this section we describe all isolated matchings (for the convex case).
Figure~\ref{fig:examples} shows a few examples of isolated matchings -- in fact,
{up to rotation}, these are all isolated matchings of sizes $1$ (a), $3$ (b), $5$ (c, d).

\begin{figure}[h]
$$\includegraphics[width=110mm]{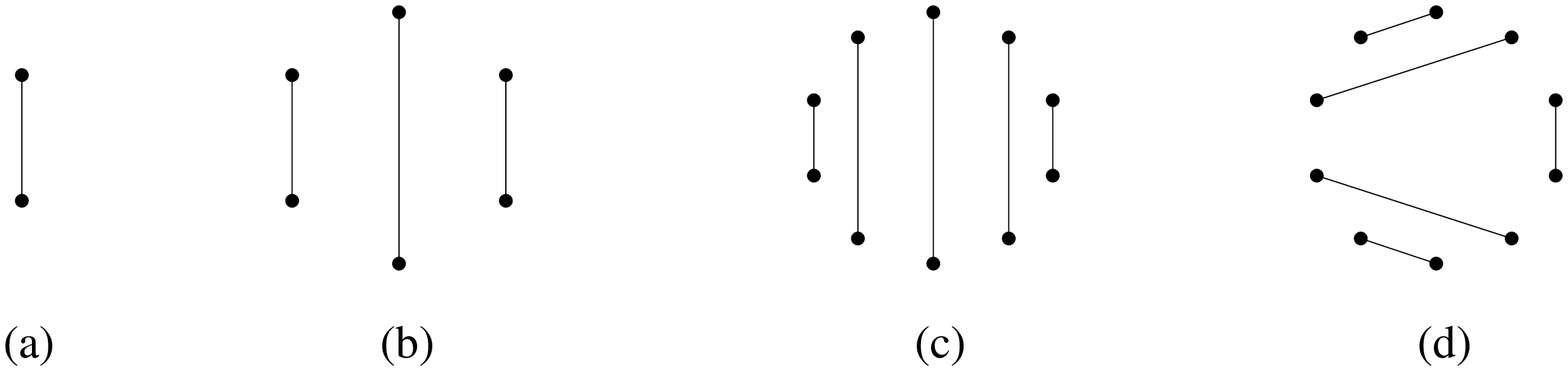}$$
\caption{Examples of isolated matchings.}
\label{fig:examples}
\end{figure}


\noindent\textbf{Definition.}
An \textit{I-matching} is
either a (unique) matching of size $1$,
or a matching of odd size $k \geq 3$
obtained from an I-matching of size $k-2$
by inserting a block in any place.

\begin{theorem}\label{thm:small_odd_struct}
A matching of odd size $k$ is isolated in $\dcm_k$ if and only if it is an I-matching.
\end{theorem}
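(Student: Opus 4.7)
The plan is to prove both directions by simultaneous induction on the odd size $k$, using Proposition~\ref{thm:few_blocks} and Proposition~\ref{thm:block_keeps_degree} as the main tools. The base case $k=1$ is immediate: there is a unique matching of size $1$, which is vacuously an I-matching and also (trivially) isolated.

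For the forward direction ($\Rightarrow$), suppose $M$ is isolated of odd size $k \geq 3$. The contrapositive of Proposition~\ref{thm:few_blocks} gives that an isolated matching must have at least two blocks (if it had none, there would be at least two disjoint compatible neighbors, and if it had exactly one, there would be at least one such neighbor). Pick any block $K$ of $M$; removing the two edges and four consecutive points of $K$ yields a matching $M'$ of size $k-2$ with $M = M' + K$ (in the sense of Section~\ref{sec:merge}). By Proposition~\ref{thm:block_keeps_degree}, the degree of $M'$ in $\dcm_{k-2}$ equals the degree of $M$ in $\dcm_k$, which is $0$. Since $k-2$ is odd, the induction hypothesis applies and $M'$ is an I-matching, so $M = M' + K$ is an I-matching by definition.

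For the backward direction ($\Leftarrow$), suppose $M$ is an I-matching of size $k \geq 3$. By definition $M = M' + K$ for some I-matching $M'$ of size $k-2$ and some block $K$. By induction $M'$ is isolated in $\dcm_{k-2}$, i.e., has degree $0$. Applying Proposition~\ref{thm:block_keeps_degree} once more, the degree of $M$ in $\dcm_k$ equals $0$, so $M$ is isolated.

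There is essentially no obstacle here once the preliminary propositions are in place: the two key lemmas together turn the problem into a clean two-step recursion. The only point that needs a moment of care is checking that the reduction preserves parity (removing a block decreases the size by $2$, keeping $k$ odd) so the induction really closes, and that Proposition~\ref{thm:few_blocks} is being used in its correct contrapositive form to extract a block from any isolated matching of odd size at least~$3$.
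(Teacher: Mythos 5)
Your proof is correct and follows essentially the same route as the paper: both arguments induct on odd $k$, use Proposition~\ref{thm:few_blocks} to extract a block from an isolated matching, and use Proposition~\ref{thm:block_keeps_degree} to transfer degree~$0$ between $M$ and $M-K$. The paper merely packages the two directions into a single dichotomy (no blocks versus at least one block), so there is no substantive difference.
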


\begin{proof} Let $M$ be a matching of odd size $k$. For $k=1$ the statement is clear. Assume $k \geq 3$.

If $M$ has no blocks, then it is not isolated by Proposition~\ref{thm:few_blocks}~(1), and it is not an I-matching by definition.

If $M$ has at least one block, the theorem follows 
from Proposition~\ref{thm:block_keeps_degree}
which says that inserting a block does not change the degree.
\end{proof}

We prove several facts about I-matchings to be used later.

\begin{observation}\label{thm:small_odd_two_blocks}
An $I$-matching of size $k\geq 3$ has at least two blocks (which are disjoint for $k \geq 5$).
\end{observation}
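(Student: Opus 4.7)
The plan is to move to the dual tree picture and induct on $k$. By Observation~\ref{thm:block_antiblock_dual_elements} the blocks of $M$ are in bijection with the $2$-branches of $D(M)$, and since the edges of $D(M)$ are in bijection with the edges of $M$, two blocks share an edge of $M$ exactly when the corresponding $2$-branches share an edge of $D(M)$. By Observation~\ref{thm:block_antiblock_dual_insertion}, the recursive definition of an I-matching translates, on the dual-tree side, to building a tree by starting from the single-edge tree on two vertices and iteratively attaching $2$-branches. So it is enough to prove that a tree built in this way and having $k+1$ vertices ($k\ge 3$ odd) has at least two $2$-branches, and at least two edge-disjoint $2$-branches when $k\ge 5$.

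For the base case $k=3$ there is only one I-tree, namely the path on four vertices obtained from the single-edge tree by attaching one $2$-branch. Both of its leaves have a degree-$2$ neighbor, which yields two $2$-branches that share the middle edge of the path; this corresponds to the two blocks in the nested-arcs matching $\{P_1P_6,P_2P_5,P_3P_4\}$ sharing the common outer edge $P_2P_5$. For the inductive step, take $k\ge 5$ and write $M=M'+K$ with $M'$ an I-matching of size $k-2\ge 3$. Then $D(M)$ is obtained from $D(M')$ by attaching a new $2$-branch $\beta=v_1u_2u_3$, where $u_2,u_3$ are vertices not in $D(M')$, at some vertex $v_1$ of $D(M')$.

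The heart of the argument is that this attachment can destroy at most one pre-existing $2$-branch of $D(M')$. A $2$-branch $abc$ of $D(M')$ (with $c$ the leaf, $b$ the degree-$2$ inner vertex, $a$ arbitrary) ceases to be a $2$-branch in $D(M)$ only if $v_1=c$ (then $c$ is no longer a leaf) or $v_1=b$ (then $b$ acquires degree~$3$). A leaf $v_1$ is the leaf of a unique $2$-branch and cannot play the role of any $b$; a degree-$2$ vertex $v_1$ is the $b$ of one $2$-branch per leaf neighbor, and having two leaf neighbors would force $D(M')$ to be a path on three vertices, contradicting $|V(D(M'))|=k-1\ge 4$. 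Hence at least one $2$-branch of $D(M')$ survives in $D(M)$, and because the two edges of $\beta$ are new they are edge-disjoint from every surviving $2$-branch, producing two edge-disjoint $2$-branches in $D(M)$. The main obstacle is precisely this disruption count, and it is the place where the hypothesis $k\ge 5$ (i.e., $|V(D(M'))|\ge 4$) enters essentially; for $k=3$ the corresponding bound fails, which is why the two blocks are forced to share an edge there.
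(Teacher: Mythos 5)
Your proof is correct, but it takes a genuinely different route from the paper's. The paper derives the observation in two lines from results already in place: by Theorem~\ref{thm:small_odd_struct} an I-matching is isolated, and by Proposition~\ref{thm:few_blocks} any matching of size $k>1$ with at most one block is \emph{not} isolated, so an I-matching must have at least two blocks; disjointness for $k\ge 5$ then comes for free from the general remark that for $k>3$ any two blocks of a matching are necessarily disjoint. You instead run a self-contained structural induction on the recursive definition of I-matchings, translated to dual trees via Observations~\ref{thm:block_antiblock_dual_elements} and~\ref{thm:block_antiblock_dual_insertion}: the key point, which the paper never needs, is that attaching a $2$-branch at a vertex $v_1$ destroys at most one existing $2$-branch (since $v_1$ can be the leaf of at most one $2$-branch, and can be the degree-$2$ inner vertex of at most one once $D(M')$ has at least four vertices), while contributing a new $2$-branch on fresh edges. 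Your argument is longer but buys independence from the isolation characterization (Theorem~\ref{thm:small_odd_struct}) and from the induction inside Proposition~\ref{thm:few_blocks}, whereas the paper's buys brevity by leaning on machinery it has already built. Both are sound, and your analysis of the base case $k=3$, where the two $2$-branches are forced to share the middle edge of the path, correctly isolates why the disjointness clause requires $k\ge 5$.
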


\begin{proof}
By Proposition~\ref{thm:few_blocks}, for $k>1$, any matching with at most one block is not isolated.
For $k\geq 4$, two blocks are always disjoint.
\end{proof}

\begin{proposition}\label{thm:isolated_no_antiblock}
If $M$ is an I-matching, then it has no antiblocks.
\end{proposition}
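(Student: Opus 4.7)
The plan is induction on the size $k$ of the $I$-matching, directly from its recursive definition. The base case $k=1$ is vacuous, since an antiblock requires four points.

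For the inductive step I would take an $I$-matching $M$ of size $k \geq 3$ and write $M = M_0 + K$, where $M_0$ is an $I$-matching of size $k-2$ and $K$ is a block inserted between some consecutive pair of points of $M_0$. Let $Q_1, Q_2, Q_3, Q_4$ denote the four inserted points in cyclic order, so that the edges contributed by $K$ are exactly $Q_1Q_4$ and $Q_2Q_3$. Assuming for contradiction that $M$ contains an antiblock $A$, I would look at the four cyclically consecutive points of $A$ in $M$ and split into two cases according to whether these points meet $\{Q_1,Q_2,Q_3,Q_4\}$ or not.

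If $A$ avoids the block, its four points lie on one side of $K$ and hence correspond to four cyclically consecutive points of $M_0$; under the identification of edges given by the insertion operation, the two boundary edges of $A$ pull back to an antiblock of $M_0$, which contradicts the inductive hypothesis. If $A$ overlaps the block, the key structural fact is that the only edges of $M$ incident to any $Q_i$ are the two block edges $Q_1Q_4$ and $Q_2Q_3$; in particular no edge of $M$ straddles the boundary between $K$ and $M_0$, and neither $Q_1Q_2$ nor $Q_3Q_4$ is an edge of $M$. A finite case analysis over the possible windows of four consecutive points that meet $K$ (parameterized by how many of the $Q_i$ they contain and, when they contain one or three of them, from which side) then shows that each possibility would force into $M$ an edge that provably is not there.

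The only mildly laborious step is this finite case check; everything else is routine unpacking of the recursive definition and of the edge correspondence induced by insertion (as recorded in Section~\ref{sec:merge} and Observation~\ref{thm:block_antiblock_dual_insertion}). So the main obstacle is simply being careful to enumerate all overlap configurations between the antiblock window and the inserted block, which is small and bounded.
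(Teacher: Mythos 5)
Your proposal is correct and is essentially the paper's own argument: the paper proves this by the same induction on the recursive construction, with the inductive step being precisely your claim that inserting a block into an antiblock-free matching cannot create an antiblock (the paper states this step without proof, while you spell out the finite case check on how a window of four consecutive points can meet the inserted quadruple). Your elaboration is sound, since the only edges of $M$ incident to the inserted points are the two block edges, so every overlapping configuration forces a missing edge.
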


\begin{proof}
The matching of size $1$ clearly has no blocks.
An insertion of a block into a matching without antiblocks
never produces a matching with an antiblock.
\end{proof}

We color the edges of I-matchings in the following way.
Let $M$ be an I-matching of size $k$, and let $e\in M$.
Then $e$ separates $M$ into two (possibly empty) submatchings whose total size is $k-1$.
If both these submatchings are of even size, $e$ will be colored red;
if they are of odd size, $e$ will be colored black.
The edges of $D(M)$ will be colored correspondingly.
See Figure~\ref{fig:small_odd_tree_2}.
The following facts are obvious, or easily seen by induction.

\begin{observation}\label{thm:blue_black}
Let $M$ be an I-matching of size $k$.
\begin{enumerate}
\item The only edge of the matching of size $1$ is red.
\item When a block $K$ is inserted in $M$ so that an I-matching $M+K$ is obtained,
then
the edges of $M+K$ corresponding to those of $M$, preserve their color;
and the edges corresponding to those of $K$ are colored as follows:
the outer edge is black,
and the inner edge is red.
\item The number of red edges is $\ell \left(= \left\lceil \frac{k}{2} \right\rceil\right)$,
and the number of black edges is $\ell-1$.
\item Each face of the dual map of $M$ has exactly one red edge.
Correspondingly, each vertex of $D(M)$ is incident to exactly one red edge.
\end{enumerate}
\end{observation}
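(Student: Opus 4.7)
The plan is to prove all four parts together by induction on $k$, using the recursive definition of I-matchings.

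\textbf{Base case $k=1$.} Part~1 is immediate from the definition of red, since removing the single edge leaves two empty submatchings of size $0$. Part~3 holds with $\ell = 1$: one red edge and zero black edges. Part~4 follows because $D(M)$ has two vertices, both incident to the unique (red) edge.

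\textbf{Inductive step.} Suppose $M' = M + K$ is an I-matching obtained from an I-matching $M$ of odd size $k$ by inserting a block $K = \{P_a P_{a+3}, P_{a+1} P_{a+2}\}$ (in the labeling after insertion). For part~2, first consider an original edge $e \in M$. Removing $e$ from $M'$ splits the point set into two arcs, and the four new points of $K$ together with the two edges of $K$ all lie on one of these arcs (since the block is inserted between two consecutive points, which are either both on one side of $e$ or are the endpoints of $e$ itself). Hence one submatching gains exactly $2$ edges and the other is unchanged, so parities on both sides are preserved and the color of $e$ is unchanged. For the outer edge $P_a P_{a+3}$ of $K$: removing it leaves the inner edge on one side (size $1$, odd) and all of $M$ on the other (size $k$, odd), so it is black. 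For the inner edge $P_{a+1} P_{a+2}$: removing it leaves an empty submatching on one side (size $0$, even) and the remaining $k+1$ edges on the other (even), so it is red.

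Part~3 follows at once: by induction $M$ has $\lceil k/2 \rceil$ red and $\lceil k/2 \rceil - 1$ black edges, and passing from $M$ to $M + K$ adds exactly one red and one black edge by part~2, matching the fact that $\ell$ grows by $1$ as $k$ grows by $2$. For part~4, I translate the insertion to the dual tree via Observation~\ref{thm:block_antiblock_dual_insertion}: attaching a $2$-branch $v_1 v_2 v_3$ to $D(M)$ modifies the face corresponding to $v_1$ and creates two new faces corresponding to $v_2$ and $v_3$. The face at $v_1$ keeps its unique red edge from $M$ (acquiring the black outer edge of $K$ as an additional boundary edge). The new face at $v_2$ is bounded by the outer edge (black), the inner edge (red), and two short pieces of $\mathbf{\Gamma}$, so has exactly one red edge. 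The leaf face at $v_3$ is bounded only by the inner edge (red) and a piece of $\mathbf{\Gamma}$, so also has exactly one red edge. The reformulation in terms of $D(M)$ — each vertex is incident to exactly one red edge — is then just the face statement read off under the edge/face duality.

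The only delicate point is the parity bookkeeping in part~2; once it is verified that the four inserted points all lie on the same side of any pre-existing edge, everything else is a routine induction.
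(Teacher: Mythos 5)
Your proof is correct and follows exactly the route the paper intends: the paper dismisses these facts as ``obvious, or easily seen by induction,'' and your argument is precisely that induction on the recursive construction, with the parity bookkeeping for part~2 and the dual-tree translation for part~4 carried out carefully. Nothing to object to.
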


\begin{figure}[h]
$$\includegraphics[width=110mm]{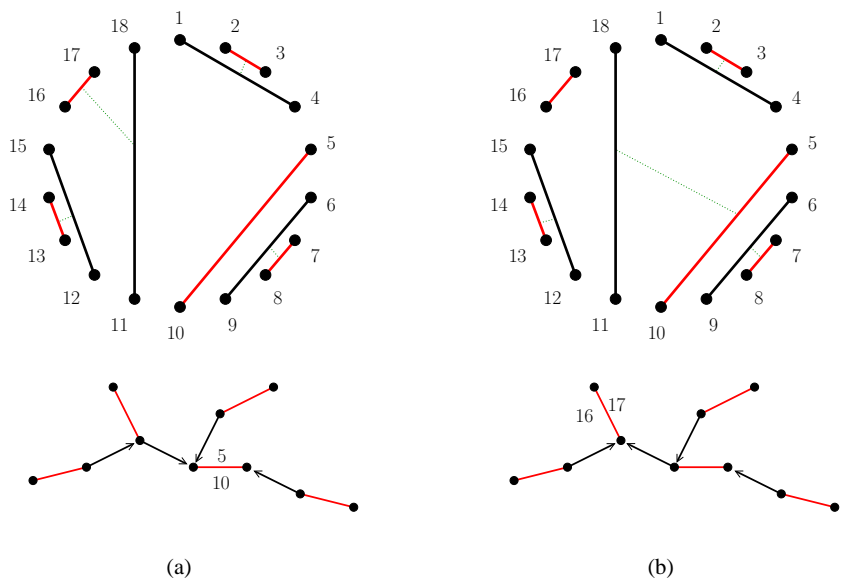}$$
\caption{An I-matching and its dual graph. (a) The root is $P_{5}P_{10}$. (b) The root is $P_{16}P_{17}$. }
\label{fig:small_odd_tree_2}
\end{figure}

According to the definition, in order to construct an I-matching $M$
we start with a matching of size $1$, and insert blocks recursively.
The edge of $M$ corresponding to the initial edge
will be called the \textit{root}.
Pairs of edges corresponding to the members of a block
inserted in some stage of the recursive construction,
will be called \textit{twins}.
However, the same I-matching can be constructed in several ways,
and therefore the root and the twins are not uniquely defined for $M$
but rather depend on the specific construction (a sequence of insertions of blocks).
Referring to a specific construction, we connect twins by green dotted lines
(thus, the root is the only edge not connected in this way to any other edge).
In the dual graph, we draw an arrow on the black edge which points to the point to which it is attached.
See Figure~\ref{fig:small_odd_tree_2}(b) for an example:
in the first drawing the root is $P_5P_{10}$,
in the second drawing it is $P_{16}P_{17}$.
See Figure~\ref{fig:small_odd_tree_2}(b) for an example:
in the first drawing the root is $P_5P_{10}$,
in the second drawing it is $P_{16}P_{17}$
(notice that the order of inserting the blocks can be also chosen in several ways).

\begin{proposition}\label{thm:small_odd_many_roots}
Let $M$ be an I-matching.
\begin{enumerate}
\item For any red edge $e$ of $M$,
there exists a recursive construction of $M$ such that $e$ is the root.
\item For each choice of the root, the pairs of twins are determined uniquely.
\end{enumerate}
\end{proposition}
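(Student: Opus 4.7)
The plan is to prove both parts by induction on $k$, using as a central tool the claim that if $K$ is any block of an I-matching $M$, then $M-K$ is itself an I-matching. This follows from Observation~\ref{thm:split} combined with Theorem~\ref{thm:small_odd_struct}: the four consecutive points of $K$ are separable from the rest by a line, so if $M-K$ admitted a disjoint compatible matching $N$, then $N$ merged with the antiblock flip $K'$ of $K$ would be disjoint compatible to $M$, contradicting that $M$ is isolated.

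For Part~1, the base case $k=1$ is immediate. For $k\geq 3$, I look for a block $K$ of $M$ with $e\notin K$ and then apply induction to $M-K$. Such a $K$ exists by Observation~\ref{thm:small_odd_two_blocks}, since $M$ has at least two blocks while the red edge $e$ can be the inner edge of at most one of them. The four points of $K$ avoid both endpoints of $e$ (which are already paired by $e$), so they all lie on one side of $e$; removal of $K$ therefore decreases that side's size by $2$ and preserves the parity of both sides, so $e$ remains red in $M-K$. The inductive hypothesis supplies a construction of $M-K$ with root $e$, and prepending the insertion of $K$ yields the desired construction of $M$.

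For Part~2, I give a structural description of the twinning directly on $D(M)$. Deleting the dual edge corresponding to $e$ splits $D(M)$ into two subtrees, each rooted at one endpoint of $e$; this induces a parent--child orientation on every other edge. By Observation~\ref{thm:blue_black}(4), each vertex has a unique incident red edge, so I define a map $\tau$ by sending each black edge $b$ to the red edge incident to the child endpoint of $b$. Since the counts of black edges and of non-root red edges are both $\ell-1$, a short argument shows $\tau$ is a bijection. I then prove by induction that every recursive construction of $M$ with root $e$ realises exactly the pairing $\tau$. The first block $K_1$ of any such construction corresponds by Observation~\ref{thm:block_antiblock_dual_elements} to a $2$-branch $v_1v_2v_3$ of $D(M)$ whose leaf $v_3$ is not on $e$; in the rooted orientation, $v_2$ is the child endpoint of the black edge $v_1v_2$, so $\tau(v_1v_2)=v_2v_3$, matching the twin pair of $K_1$. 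Since the parent--child orientation restricts consistently from $D(M)$ to $D(M-K_1)$, induction applied to $M-K_1$ finishes the argument.

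The main subtlety in Part~2 is that blocks removed at intermediate stages of a construction need not be blocks of $M$: \emph{new} $2$-branches can emerge once earlier ones are peeled away, so a naive exchange argument on removal orders is awkward. The structural definition of $\tau$ on the full dual tree $D(M)$ bypasses this cleanly, since I only need to identify the \emph{first} removed block (which is necessarily a $2$-branch of $D(M)$) as a $\tau$-pair before reducing to the smaller I-matching $M-K_1$.
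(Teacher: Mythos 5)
Your proof is correct, and Part~1 follows the paper's own argument almost verbatim: choose a block $K$ with $e\notin K$ (existence from Observation~\ref{thm:small_odd_two_blocks}, since a red edge can only occur in a block as its inner edge and hence lies in at most one block), recurse on the I-matching $M-K$, and re-insert $K$. Part~2 is where you genuinely diverge. The paper also peels off a fixed block $K$ of $M$ avoiding the root, but its key step is the claim that in \emph{any} construction the inner edge of this fixed $K$ can be twinned only with its outer edge; uniqueness then follows by recursing on $M-K$. You instead exhibit a canonical candidate pairing $\tau$ (each black edge is sent to the unique red edge at its child endpoint in the dual tree rooted at $e$, using Observation~\ref{thm:blue_black}(4)) and show that every construction realizes $\tau$ by peeling that construction's own last-inserted block $K_1$, which is a block of $M$ and whose two edges are mutual twins by definition, so no exchange argument is needed. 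This costs you the bookkeeping of checking that $\tau$ and the parent--child orientation restrict consistently to $D(M-K_1)$, but it buys an explicit closed-form description of the twin pairing in terms of $(D(M),e)$ alone, which the paper never states, and it sidesteps the (implicit and slightly delicate) justification the paper needs for why the inner edge of a block of $M$ cannot be twinned with an edge outside that block. Both routes are sound; yours is longer but more informative.
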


\begin{proof} For $k=1$ the statements hold trivially. Assume $k \geq 3$.
Let $K$ be a block that does not contain $e$
(existence of such a block is clear for $k=3$,
and follows from Observation~\ref{thm:small_odd_two_blocks} for $k\geq 5$).
\begin{enumerate}
\item
By induction,
there exists a recursive construction of $M-K$ such that the edge corresponding to $e$
is the root.
Upon inserting $K$, $e$ is a root of $M$.
\item
The inner edge of $K$ can be a twin only of the outer edge of $K$.
Then we continue inductively for $M-K$.
\end{enumerate}
\end{proof}



\begin{restatable}{theorem}{smalloddenum}\label{thm:small_odd_enum}
The number of I-matchings of size $k$ is $\frac{1}{\ell}\binom{4\ell-2}{\ell-1}$ (where $ \ell = \left\lceil \frac{k}{2} \right\rceil$).
\end{restatable}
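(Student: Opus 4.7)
Plan:

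My plan is to reduce the enumeration to counting \emph{rooted} I-matchings and then establish a bijection with a set of known size. Call a pair $(M,r)$ a \emph{rooted I-matching} if $M$ is an I-matching of size $2\ell-1$ and $r$ is a distinguished red edge. By Observation~\ref{thm:blue_black}(3), each I-matching has exactly $\ell$ red edges, and by Proposition~\ref{thm:small_odd_many_roots}(1) every red edge may serve as the root of a recursive construction of $M$. Hence if $R_\ell$ denotes the number of rooted I-matchings, we have $R_\ell=\ell\cdot i_\ell$, and the formula $i_\ell=\tfrac{1}{\ell}\binom{4\ell-2}{\ell-1}$ follows from the identity $R_\ell=\binom{4\ell-2}{\ell-1}$.

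To prove this identity I would exhibit a bijection between rooted I-matchings of size $2\ell-1$ and $(\ell-1)$-element subsets of $\{P_1,\dots,P_{4\ell-2}\}$. The forward map uses Proposition~\ref{thm:small_odd_many_roots}(2): once the root is fixed, the set of $\ell-1$ twin pairs is uniquely determined, so one may associate to $(M,r)$ the set of $\ell-1$ \emph{distinguished} points, one per twin pair, defined as the final label of the point playing the role of $S_1$ when that twin pair is inserted during a canonical construction sequence (for instance, inserting outermost twin pairs before their nested descendants, and siblings in order of increasing label). The inverse map takes an $(\ell-1)$-subset and reconstructs $(M,r)$ by greedily reversing this procedure: each marked point identifies where a twin pair was attached, and processing the markers in the canonical order recovers both the twin structure and the root.

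The main obstacle will be verifying that this correspondence is really a bijection. One must (i) pin down a canonical insertion order for the twin pairs so that the distinguished point per pair is a function of $(M,r)$ alone, (ii) show that distinct rooted I-matchings yield distinct subsets, and (iii) show that every $(\ell-1)$-subset of $\{P_1,\dots,P_{4\ell-2}\}$ arises uniquely, i.e.\ that the greedy inverse always succeeds and produces an actual I-matching. This amounts to a careful analysis of how twin pairs can nest (the twin tree structure of Proposition~\ref{thm:small_odd_many_roots}) and how nesting interacts with the cyclic labeling of $X_{2k}$. The base case $\ell=2$ is easy to verify by hand: the single twin pair is mapped to the final label of its inserted $S_1$, and direct inspection (across the $6$ rooted I-matchings on $6$ points) shows that all six singleton subsets of $\{P_1,\dots,P_6\}$ are achieved exactly once, matching $R_2=\binom{6}{1}=6$. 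This provides evidence that the bijection extends cleanly, with the planar nesting encoded combinatorially by the chosen markers, and yields $R_\ell=\binom{4\ell-2}{\ell-1}$.
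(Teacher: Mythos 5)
Your reduction to rooted I-matchings is sound: by Observation~\ref{thm:blue_black}(3) every I-matching has exactly $\ell$ red edges, and by Proposition~\ref{thm:small_odd_many_roots} each choice of red root determines the twin pairs uniquely, so the number $R_\ell$ of rooted I-matchings is indeed $\ell\cdot i_\ell$, and the theorem is equivalent to $R_\ell=\binom{4\ell-2}{\ell-1}$. This is a genuinely different route from the paper, which does not average over all $\ell$ roots but instead singles out one canonical root (the unique ``special'' red edge $P_iP_j$ whose removal leaves all other edges before, inside, or after it), shows that an I-matching is equivalent to a triple of non-crossing partitions into quadruples, and extracts the coefficient of $x^\ell$ in $xg^3(x)$, where $g=1+xg^4$ is Fuss's generating function, via Lagrange inversion.

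The problem is that your version of the identity $R_\ell=\binom{4\ell-2}{\ell-1}$ is exactly where all the enumerative content of the theorem lives, and you do not prove it. Once the root is fixed, a rooted I-matching amounts to a non-crossing partition of the $4\ell-2$ points into $\ell-1$ quadruples (the twin pairs) and one pair (the root), each quadruple connected in the nested way; so your claimed bijection is a statement that such partitions are equinumerous with $(\ell-1)$-subsets of the point set. That is a Fuss--Catalan-type identity, and a bijective proof of it (marking one canonical point per quadruple and inverting greedily) is plausible --- it is in the spirit of cycle-lemma arguments --- but it is not routine: you yourself list well-definedness of the marked point, injectivity, and surjectivity of the inverse as open obstacles, and you verify only $\ell=2$. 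In particular, the inverse map must reconstruct, from $\ell-1$ marks and no mark for the root, both the location of the root pair and the sizes and nesting pattern of all quadruples; nothing in the proposal establishes that this reconstruction always succeeds or is unique. As written, the argument replaces the theorem by an equivalent unproved identity. To close the gap you would either have to carry out the bijection in full (including the analysis of how the gaps between consecutive points of a quadruple are filled by whole quadruples, which is what makes the greedy inversion work), or fall back on a generating-function computation: the partitions into $\ell-1$ quadruples and one pair on either side of the root are counted by $[x^{\ell-1}]\,g^2(x)\cdot(\text{placement of the pair})$, and one lands back at essentially the paper's Lagrange-inversion calculation.
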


The proof of Theorem~\ref{thm:small_odd_enum} is closely related to that of enumeration of L-matchings that well be introduced in Section~\ref{sec:leaves}.
Therefore, these proofs will be given together (in Section~\ref{sec:leaves_enum}).

\subsection{Leaves}\label{sec:leaves}
In this section we study the matchings that correspond to leaves -- 
that is, vertices of degree $1$ -- in $\dcm_k$ (for both odd and even values of $k$).



\medskip

\noindent\textbf{Definition.}
An \textit{L-matching} is
either a ring of size $2$,
a ring of size $3$,
or a matching of size $k \geq 4$
that can be obtained from an L-matching of size $k-2$
by inserting a block in any place.

\begin{theorem}\label{thm:leaves_struct}
Let $k$ be any natural number.
A matching of size $k$ is a leaf in $\dcm_k$ if and only if it is an L-matching.
\end{theorem}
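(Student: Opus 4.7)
The plan is to prove both directions of the equivalence by induction on $k$, driven by the recursive definition of L-matching. The engine of the induction is Proposition~\ref{thm:block_keeps_degree}, which asserts that inserting a block into a matching preserves its degree in $\dcm$; since the proposition is proved via an explicit bijection, the equality of degrees holds in both directions.

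For the $[\Leftarrow]$ direction, I would first dispatch the base cases by direct inspection: the two rings of size $2$ are disjoint compatible only to each other, and the same holds for the two rings of size $3$ (the only flippable partition of such a ring consists of the whole ring as one flippable set, whose flip is the other ring). Thus rings of size $2$ or $3$ are leaves. For the inductive step, if $M$ is an L-matching of size $k \geq 4$, then by definition $M = L + K$ for some L-matching $L$ of size $k-2$ and some block $K$. Applying Proposition~\ref{thm:block_keeps_degree} and the inductive hypothesis, $\deg(M) = \deg(L) = 1$, so $M$ is a leaf.

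For the $[\Rightarrow]$ direction, I would again start with the small cases. For $k = 1$ there are no leaves (the unique matching has degree $0$), and no L-matchings, so both sides are vacuously equivalent; for $k \in \{2,3\}$ one checks that the rings are leaves while all other matchings have degree $0$ (for $k=3$ the non-ring matchings are exactly the I-matchings, hence isolated by Theorem~\ref{thm:small_odd_struct}, or alternatively by direct enumeration of the five matchings). For $k \geq 4$, suppose $M$ is a leaf. Since $\deg(M) = 1 < 2$, Proposition~\ref{thm:few_blocks}(1) forces $M$ to contain at least one block, say $K$. Let $L = M - K$, a matching of size $k-2$. Applying Proposition~\ref{thm:block_keeps_degree} to the pair $(L, L+K) = (L, M)$ gives $\deg(L) = \deg(M) = 1$, so $L$ is itself a leaf. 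By the inductive hypothesis $L$ is an L-matching, and hence $M = L + K$ is an L-matching by definition.

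The main obstacle in this argument is conceptually light but must be checked carefully: one needs the existence of a block in every leaf of size $\geq 4$, which is exactly what Proposition~\ref{thm:few_blocks}(1) supplies (no blocks would yield at least two disjoint compatible neighbors). Once that is in hand, the induction reduces to purely routine bookkeeping via the degree-preserving bijection of Proposition~\ref{thm:block_keeps_degree}, so no new combinatorial idea is required beyond what has already been developed.
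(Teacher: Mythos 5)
Your proof is correct and follows essentially the same route as the paper's: base cases for $k\leq 3$, then for $k\geq 4$ an induction whose two pillars are Proposition~\ref{thm:few_blocks}(1) (a leaf must contain a block) and the degree-preserving bijection of Proposition~\ref{thm:block_keeps_degree}. The paper merely compresses your two directions into a single two-case argument, so there is no substantive difference.
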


\begin{proof} For
$k \leq 3$ the statement holds trivially or can be verified directly.
Assume $k \geq 4$.

If $M$ has no blocks, then by Proposition~\ref{thm:few_blocks}~(1) it has at least two neighbors
and thus is not a leaf, and it is not an L-matching by definition.

If $M$ has at least one block, the theorem follows from Proposition~\ref{thm:block_keeps_degree}
which says that inserting a block doesn't change the degree.
\end{proof}

Thus, the recursive construction of L-matchings is very similar to that of I-matchings --
only the basis is different.
We define roots and twins for L-matchings similarly to the case of I-matchings,
with the following difference.
For even $k$,
we do not define root,
and the edges corresponding to the initial pair of edges will be also called twins.
For odd $k$,
the edges corresponding to the initial triple of edges will be called \textit{the root triple}.

\begin{proposition}\label{thm:leaves_roots}
Let $M$ be an L-matching.
\begin{enumerate}
\item For even $k$, the pairs of twins are determined uniquely.
\item For odd $k$, the root triple and the pairs of twins are determined uniquely.
\end{enumerate}
\end{proposition}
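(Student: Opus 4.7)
The plan is strong induction on $k$. The base cases $k = 2$ (for (1)) and $k = 3$ (for (2)) are trivial: the definition forces $M$ (a ring of size $2$ or $3$) to have its two edges form exactly the initial pair of twins (respectively, its three edges form the root triple), with no ambiguity.

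For $k \ge 4$ the key preliminary is a \emph{peeling lemma}: for every block $K$ of $M$, the smaller matching $M - K$ is again an L-matching. This follows by combining Theorem~\ref{thm:leaves_struct} with Proposition~\ref{thm:block_keeps_degree}: since $M$ is a leaf of $\dcm_k$, we have $\deg_{\dcm_{k-2}}(M - K) = \deg_{\dcm_k}(M) = 1$, whence $M - K$ is a leaf and therefore an L-matching by Theorem~\ref{thm:leaves_struct} applied at size $k-2$. In particular, every block of $M$ can appear as the block inserted in the last step of some valid recursive construction, and by the inductive hypothesis $M - K$ has a canonical set of twin pairs and, in the odd case, a canonical root triple.

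To complete the induction, I would run a \emph{swap argument}. Let $K_1, K_2$ be two distinct blocks of $M$, corresponding to two different choices of ``last inserted block'' in two recursive constructions. For $k \ge 4$ the blocks $K_1, K_2$ are necessarily disjoint and occupy two disjoint runs of four consecutive points of $X_{2k}$, so $K_2$ remains a block of the L-matching $M - K_1$ and symmetrically $K_1$ remains a block of $M - K_2$. Applying the peeling lemma twice, $M - K_1 - K_2$ is an L-matching of size $k - 4$. Invoking the inductive hypothesis inside $M - K_1$ and inside $M - K_2$ now shows that both constructions of $M$ yield the same collection of twin pairs, namely $\{K_1, K_2\}$ together with the canonical twin pairs of $M - K_1 - K_2$, and (in the odd case) the same root triple as $M - K_1 - K_2$. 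Since any two recursive constructions of $M$ can be transformed into each other by a finite sequence of such pairwise swaps of peeling order, the twin pairs and the root triple are invariants of $M$.

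The main obstacle I expect is the cyclic-bookkeeping step in the swap: one must verify that a block $K_2$ of $M$ remains a block after the four consecutive points of $K_1$ are deleted and the remaining points of $X_{2k}$ are relabeled cyclically to form $X_{2k-4}$. This reduces immediately to the observation that deleting one contiguous run of four points from a cyclic sequence preserves the cyclic consecutivity of any disjoint run of four points, so the block pattern of $K_2$ is inherited by $M - K_1$. Once this is in place, the induction is routine.
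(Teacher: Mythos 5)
Your proof is essentially correct, but it takes a genuinely different route from the paper. The paper's argument is a two-line reduction: the pairs of twins together with the root triple form a flippable partition of $M$, and since an L-matching is a leaf of $\dcm_k$ (Theorem~\ref{thm:leaves_struct}), Proposition~\ref{thm:flip} gives that $M$ has exactly \emph{one} flippable partition; hence the twins and root triple are forced. Your argument instead runs a confluence (``diamond'') induction on the recursive construction: the peeling lemma (every block $K$ of $M$ yields an L-matching $M-K$, via Proposition~\ref{thm:block_keeps_degree} and Theorem~\ref{thm:leaves_struct}) is correct, and comparing two constructions through their last inserted blocks $K_1\neq K_2$ and descending to $M-K_1-K_2$ is sound, since distinct blocks are edge-disjoint and occupy disjoint runs of four consecutive points. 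Two remarks. First, your closing appeal to ``a finite sequence of pairwise swaps'' is both unproved and unnecessary: the induction as you set it up already compares \emph{any} two constructions directly via their last steps, so you should drop that sentence rather than try to justify it. Second, the claim that $M-K_1-K_2$ is an L-matching of size $k-4$ fails at the boundary: for $k=4$ it is empty (there the argument still goes through because $M-K_1=K_2$ is itself a ring of size $2$, so its twin pair is immediate), and for $k=5$ one should note that two disjoint blocks cannot coexist, since $M-K_1$ would be a ring of size $3$ and hence block-free. What your approach buys is that it avoids the small geometric verification, left implicit in the paper, that the twin pairs of an arbitrary construction really do have pairwise disjoint convex hulls avoiding all other edges (i.e., form a flippable partition even after later insertions nest inside them); what the paper's approach buys is brevity and a cleaner conceptual reason --- uniqueness of the flippable partition of any leaf --- that is reused elsewhere.
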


\begin{proof} The pairs of twins and (in the odd case) the root triple form a flippable partition.
Thus, the uniqueness follows in both cases from the fact that
any L-matching is disjoint compatible to exactly one matching and,
therefore, it has exactly one flippable partition.
\end{proof}

\subsection{Enumeration of I- and L-matchings}\label{sec:leaves_enum}

Enumeration of I-matchings and L-matchings will be based on the following well-known result
about non-crossing partitions.
A \textit{non-crossing partition} of a set of points in convex position
is a partition of this set into non-empty subsets
whose convex hulls do not intersect
(thus, a non-crossing matching is essentially a non-crossing partition in which all the subsets are of size $2$).

\begin{theorem}[Essentially, a special case of a result by N.\ Fuss from 1791~\cite{fuss}]
\label{thm:fuss}
For $\ell\geq 0$, let $a_\ell$ be the number of non-crossing partitions of a set of $4\ell$ labeled points in convex position into
$\ell$ quadruples ($a_0=1$ by convention).
Let $g(x)=a_0+a_1x+a_2x^2+\dots$ be the corresponding generating function.
Then:
\begin{enumerate}
\item The generating function $g(x)$ satisfies the equation
\begin{equation}\label{eq:gf_quadruples}
g(x) = 1+xg^4(x).
\end{equation}
\item The numbers $a_\ell$ are given by
\begin{equation}\label{eq:number_quadruples}
a_\ell = \frac{1}{3 \ell+1} \binom{4 \ell}{\ell}.
\end{equation}
\end{enumerate}
\end{theorem}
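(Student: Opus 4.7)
The plan is to prove part (1) by a standard recursive decomposition based on the quadruple containing a fixed point, and then to deduce part (2) from the functional equation by Lagrange inversion.

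For part (1), I would fix the first point $P_1$ and condition on the quadruple $Q = \{P_1, P_i, P_j, P_k\}$ (with $1 < i < j < k \leq 4\ell$) that contains it. Because the partition is non-crossing, the quadruple $Q$ splits the remaining $4\ell - 4$ points into four cyclic arcs — namely the points strictly between consecutive members of $Q$ along the convex hull — and each arc must be partitioned independently into quadruples by the induced non-crossing sub-partition. In particular, each arc must have a number of points that is a multiple of four. Writing the arc sizes as $4j_1, 4j_2, 4j_3, 4j_4$ with $j_1+j_2+j_3+j_4 = \ell-1$, each arc contributes an independent factor $a_{j_t}$, and summing over all positions of $i,j,k$ (equivalently, over all quadruples $(j_1,j_2,j_3,j_4)$ of nonnegative integers with sum $\ell-1$) gives
\begin{equation*}
a_\ell \;=\; \sum_{j_1+j_2+j_3+j_4=\ell-1} a_{j_1}a_{j_2}a_{j_3}a_{j_4} \qquad (\ell \geq 1),
\end{equation*}
which is precisely the coefficient-by-coefficient form of $g(x) = 1 + x\,g(x)^4$ (with the constant term $1$ coming from the empty partition $a_0 = 1$).

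For part (2), I would set $h(x) = g(x) - 1$, so that the functional equation becomes $h = x(1+h)^4$, which is in exactly the form needed for Lagrange inversion. Lagrange inversion then yields, for $\ell \geq 1$,
\begin{equation*}
a_\ell \;=\; [x^\ell]\,h(x) \;=\; \frac{1}{\ell}\,[t^{\ell-1}](1+t)^{4\ell} \;=\; \frac{1}{\ell}\binom{4\ell}{\ell-1},
\end{equation*}
and the identity $\binom{4\ell}{\ell-1} = \frac{\ell}{3\ell+1}\binom{4\ell}{\ell}$ converts this into the stated form $\frac{1}{3\ell+1}\binom{4\ell}{\ell}$; the case $\ell = 0$ is handled directly.

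There is no serious obstacle: the result is classical (it is the Fuss–Catalan number for $m = 4$). The only point that needs a line of justification is why the quadruple containing $P_1$ really does separate the remaining points into four \emph{independent} non-crossing sub-problems on arcs of size divisible by four — this is exactly what non-crossing-ness of the partition buys us, since any quadruple using points from two different arcs would cross $Q$.
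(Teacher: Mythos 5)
Your proof is correct and follows essentially the same route as the paper: the paper likewise deduces part (2) from part (1) by Lagrange inversion applied to $\tilde g = g-1$ with $G(x)=(x+1)^4$, arriving at $\frac{1}{\ell}\binom{4\ell}{\ell-1}$ and then at the stated form. The only difference is that for part (1) the paper simply invokes Fuss's classical result (via the standard equivalence with diagonal dissections of a polygon into pentagons), whereas you write out the arc-decomposition around the quadruple containing $P_1$ explicitly; that decomposition is the standard proof of the classical fact and is sound.
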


\noindent\textit{Remarks.}
\begin{enumerate}
\item N.\ Fuss proved that for fixed $d \geq 2$,
the number of dissections of a convex $((d-1)\ell + 2)$-gon by its diagonals
into $\ell$ \ \  $(d+1)$-gons is $\frac{1}{(d-1)\ell+1} \binom{d\ell}{\ell}$,
and (essentially) that the corresponding generating function satisfies the equation $g(x) = 1+xg^{d}(x)$.
These numbers are known as Pfaff-Fuss (or Fuss-Catalan) numbers.
For $d=2$, Catalan numbers are obtained.
See~\cite[A062993]{oeis} for this two-parameter array and~\cite{brown} for a historical note on the topic.
It is easy to see that the two structures --
diagonal dissections of a convex $((d-1)\ell + 2)$-gon into $\ell$ \ \  $(d+1)$-gons
\textit{and}
non-crossing partitions of $d \ell $ points in convex position into $\ell$ sets of size $d$, --
have the same recursive structure
(see~\cite[Exercise $6.19$ (a) and (n)]{stanley} for the case of $d=2$).
Thus, $a_\ell$ are Pfaff-Fuss numbers with $d=4$.
\item Eq.~\eqref{eq:number_quadruples}
-- rather in the form $\frac{1}{\ell} \binom{4 \ell}{\ell-1}$ for $\ell\geq 1$ --
follows from Eq.~\eqref{eq:gf_quadruples} by the Lagrange inversion formula~\cite[Theorem 5.4.2]{stanley}.
Indeed, Eq.~\eqref{eq:gf_quadruples} is equivalent to
$x = \frac{\tilde{g}(x)}{(\tilde{g}(x)+1)^4}$ where $\tilde{g}(x) = g(x)-1$.
Therefore, if, following the notation as in the reference above,
we take $F(x) = \frac{x}{(x+1)^4}$,
or, equivalently, $G(x)=(x+1)^4$, and $k=1$,\footnote{
This $k$ from the statement of the Lagrange inversion formula in~\cite{stanley} is of course different from $k$ as we use it in this paper.\label{fn:k}}
we obtain $
a_\ell=[x^\ell] \tilde{g}(x) =
\frac{1}{\ell} [x^{\ell-1}] G^{\ell}(x) =
\frac{1}{\ell} [x^{\ell-1}] (x+1)^{4\ell} = \frac{1}{\ell} \binom{4 \ell}{\ell-1}$.
\end{enumerate}

\smalloddenum*

\begin{theorem}\label{thm:leaves_enum}\mbox{}
\begin{enumerate}
\item
For odd $k $, the number of L-matchings of size $k$ is $\frac{2}{3} \frac{\ell-1}{\ell} \binom{4 \ell - 2}{\ell - 1} $
(where $ \ell = \left\lceil \frac{k}{2} \right\rceil$).
\item
For even $k $, the number of L-matchings of size $k$ is $\frac{\ell+1}{3 \ell+1} \binom{4 \ell}{\ell}$
(where $ \ell = \left\lceil \frac{k}{2} \right\rceil$).
\end{enumerate}
\end{theorem}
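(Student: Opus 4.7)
My plan is to prove Theorems~\ref{thm:small_odd_enum} and~\ref{thm:leaves_enum} together, via a single structural decomposition whose generating functions all reduce to powers of the Fuss--Catalan series $g(x) = 1 + x g(x)^4$ of Theorem~\ref{thm:fuss}. The approach has three ingredients: (i) define the \emph{H-structure} that naturally appears on one side of a red edge; (ii) show that its generating function equals $g(x)$; (iii) for each of the three enumerations, decompose the matching around a canonical red element (the red root for an I-matching, the super-central vertex $v^*$ of the dual tree for an L-matching) into a fixed number of H-substructures, and apply Lagrange inversion.

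Concretely, an H-structure with $a$ red edges is a rooted combinatorial embedding on $2a+1$ vertices whose root carries no red edge and in which every other vertex lies on exactly one red edge, so that the $a$ red edges form a perfect matching on the $2a$ non-root vertices. The key fact is that the generating function $h(x) = \sum_{a\ge 0} h_a x^a$ satisfies $h(x) = g(x)$. The proof is a recursive decomposition: an H-structure is a (possibly empty) sequence of \emph{HG-substructures} hanging below its root via black edges, and each HG-substructure, which consists of a red edge $wv$ together with the subtrees below, splits around $wv$ into exactly three independent H-substructures --- two at $w$ (on either side of the up-edge in the cyclic order at $w$) and one at $v$. This gives $h = 1/(1 - xh^3)$, equivalent to $h = 1 + xh^4$, hence $h = g$.

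For Theorem~\ref{thm:small_odd_enum}, Proposition~\ref{thm:small_odd_many_roots} ensures each of the $\ell$ red edges of an I-matching is a valid root, so the count of rooted I-matchings is $\ell f_\ell$. A marked root partitions the $4\ell - 2$ labelled points into arcs of sizes $4a$ and $4b$ with $a+b=\ell-1$, each supporting an H-structure. Summing over the $4\ell-2$ labelled positions of an oriented root edge and unorienting gives
\[
\ell f_\ell \;=\; (2\ell - 1)\,[x^{\ell-1}]
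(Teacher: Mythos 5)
Your proposal never reaches the statement it is meant to prove: the text breaks off in the middle of the displayed equation for the \emph{I-matching} count (Theorem~\ref{thm:small_odd_enum}), and the two parts of Theorem~\ref{thm:leaves_enum}, which concern \emph{L-matchings}, are not argued at all. The only trace of them is the phrase about ``the super-central vertex $v^*$ of the dual tree for an L-matching'' in your opening plan; $v^*$ is never defined for either parity, no decomposition around it is carried out, and no Lagrange computation is performed. Moreover, the plan as announced --- decompose around one canonical element into a \emph{fixed} number of H-substructures and apply Lagrange inversion --- cannot by itself yield the stated formulas. Any count of the form $c\cdot[x^{\ell}]g^m(x)$ with $m$ fixed equals $\frac{cm}{\ell}\binom{4\ell+m-1}{\ell-1}$, whereas the even answer is $(\ell+1)\cdot\frac{1}{3\ell+1}\binom{4\ell}{\ell}$ and the odd answer is $\frac{2(\ell-1)}{3}\cdot\frac{1}{\ell}\binom{4\ell-2}{\ell-1}$: both carry an extra $\ell$-dependent multiplicative factor that must come from a marking or multiplicity argument. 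The paper supplies exactly this --- each non-crossing partition into $\ell$ quadruples is fitted by precisely $\ell+1$ L-matchings in the even case, and in the odd case each of the $\ell-1$ RBR-triples of an I-matching yields $2$ L-matchings while each L-matching is reached $3$ times (via the uniqueness of the root triple, Proposition~\ref{thm:leaves_roots}) --- and your sketch contains no mechanism to produce these factors.

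For what it is worth, the portions you did write are sound and close in spirit to the paper: your H-structures are the non-crossing partitions into quadruples of Theorem~\ref{thm:fuss} viewed through the dual tree, the recursion $h=1+xh^4$ is the paper's $g=1+xg^4$, and the rooted I-matching identity you were heading towards, $\ell f_\ell=(2\ell-1)[x^{\ell-1}]g^2(x)=(2\ell-1)\cdot\frac{2}{\ell-1}\binom{4\ell-3}{\ell-2}=\binom{4\ell-2}{\ell-1}$, is consistent with Theorem~\ref{thm:small_odd_enum}. But as submitted, the proof of Theorem~\ref{thm:leaves_enum} itself is missing, and the stated strategy needs an additional multiplicity argument before it could succeed.
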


\begin{proof} It will be convenient to prove
first Theorem~\ref{thm:leaves_enum}~(2),
then Theorem~\ref{thm:small_odd_enum},
and finally Theorem~\ref{thm:leaves_enum}~(1).

A matching $M$ and a non-crossing partition $T$ of $X_{2k}$ \textit{fit} each other
if every edge of $M$ connects two points that belong to the same set of the partition $T$.

\smallskip

\noindent\textit{Proof of Theorem~\ref{thm:leaves_enum}~(2).}
Let $M$ be an L-matching of even size $k$.
We saw in Proposition~\ref{thm:leaves_roots} that the edges of $M$
can be partitioned into pairs of twins in a unique way.
Replace each pair of twins by a quadruple of points.
In this way we obtain a (unique) non-crossing partition of $X_{2k}$ into $\ell$ quadruples that fits $M$.

Let $T$ be any non-crossing partition of $X_{2k}$ into $\ell$ quadruples.
We show that there are exactly $\ell+1$ L-matchings that fit $T$.
For $k=2$ ($\ell=1$) there are $2$ L-matchings, both fitting the (unique) non-crossing partition into quadruples.
For $k \geq 4$ ($\ell \geq 2$) we proceed by induction as follows.

Let $s$ be any quadruple of $T$ that consists of four consecutive points $P_i, P_{i+1}, P_{i+2}, P_{i+3}$.
(Such a quadruple will be called an \textit{ear}. Each non-crossing partition
with at least two parts has at least two ears.)
For each L-matching of size $k-2$ that fits $T \setminus \{s\}$,
we can connect $P_i$ with $P_{i+3}$ and $P_{i+1}$ with $P_{i+2}$.
This is inserting a block, and, thus, an L-matching of size $k$ is obtained.
By induction, the number of matchings that we obtain in this way is $\ell$.

In order to obtain one more matching, we connect first
$P_i$ with $P_{i+1}$ and $P_{i+2}$ with $P_{i+3}$.
We show now that this can be completed to an L-matching in exactly one way.
Namely, let $s'$ be any quadruple of $T$ ($s'\neq s$).
Suppose that the points of $s'$ are $P_{\alpha}, P_{\beta}, P_{\gamma}, P_{\delta}$
so that the cyclic order of the labels of the points of $S \cup S'$ satisfies
$i+4 \prec \alpha \prec \beta \prec \gamma \prec \delta \prec i $.
Then we must connect $P_{\alpha}$ with $P_{\delta}$ and $P_{\beta}$ with $P_{\gamma}$.
Indeed, if we do that for each quadruple, an L-matching is obtained.
In order to see that, erase an ear different from $s$.
In this way a block is deleted from a matching, and then the induction applies.
On the other hand, if in some $s'$ we connect $P_{\alpha}$ with $P_{\beta}$ and $P_{\gamma}$ with $P_{\delta}$,
then we have two quadruples of $T$ that contain a flippable pair
and in both (with respect to the order of their union) the first point is connected to the second,
and the third to the fourth.
It is easy to see from the definition that this never happens in L-matchings.

To summarize:
by Theorem~\ref{thm:fuss}, there are
$\frac{1}{3 \ell+1} \binom{4 \ell}{\ell}$
non-crossing partitions of $X_{2k}$ into $\ell$ quadruples,
each such partition fits $\ell+1$ L-matchings,
and each L-matching is obtained in this way exactly once.
Therefore, the number of L-matchings of size $k$ is $\frac{\ell+1}{3 \ell+1} \binom{4 \ell}{\ell}$.

\smallskip

\noindent\textit{Proof of Theorem~\ref{thm:small_odd_enum}.}
First, each I-matching $M$ has exactly one red edge
$e=P_iP_j$ ($i<j$) such that
all other edges of $M$
either connect two points from the set $\{1, 2, \dots, i-1\}$ (\textit{appear before $e$}),
or two points from the set $\{i+1, i+2, \dots, j-1\}$ (\textit{appear inside $e$}),
or two points from the set $\{j+1, j+2, \dots, 2k\}$ (\textit{appear after $e$});
such an edge will be called \textit{the special red edge}.
Indeed, this holds trivially for the matching of size $1$,
and this remains true when a block is inserted:
if a block is inserted between $P_\alpha$ and $P_{\alpha+1}$ where $1 \leq \alpha \leq 2k-1$,
then (only) the edge corresponding to the old special red edge is special;
and if a block is inserted between $P_{2k}$ and $P_{1}$,
then the red edge of this block becomes the special one.

Let $M$ be an I-matching and let $e=P_iP_j$ be its special red edge.
By Proposition~\ref{thm:small_odd_many_roots}, there exists a recursive construction of $M$ such that $e$ is the root.
Replace all the pairs of edges that were inserted as blocks at some step of this construction by quadruples.
Then we have three non-crossing partitions of the corresponding sets of points into quadruples:
one before $e$, one inside $e$, one after $e$.
On the other hand, for each such partition, there is only one way to connect points of each quadruples by two edges
in order to obtain an I-matching.
Namely, for a quadruple $P_{\alpha}, P_{\beta}, P_{\gamma}, P_{\delta}$ with $\alpha < \beta < \gamma < \delta$
we must connect $P_{\alpha}$ with $P_{\delta}$ and $P_{\beta}$ with $P_{\gamma}$.
The proof is similar to that above: the points of an ear must be connected in this way
(otherwise the conclusion of Proposition~\ref{thm:blue_black} (3) is not satisfied),
and then induction applies.

Thus, three non-crossing partitions of points before, inside, and after $e$ into quadruples
determine uniquely an I-matching.
It follows that the generating function for the number of such matchings is $xg^3(x)$,
where $g(x)$ is the function from Theorem~\ref{thm:fuss}.
In order to calculate its coefficients,
we use the general form of the Lagrange inversion formula~\cite[Corollary 5.4.3]{stanley}
with $G(x)=(x+1)^4$, $H(x)=(x+1)^3$ (so that $g^3(x) 
= H(\tilde g(x) )$),
and $k=3$.\footnote{The same remark as in footnote~\ref{fn:k} applies.}
We obtain
\[ [x^\ell] x g^3(x) =
[x^{\ell-1}]  g^3(x) =
[x^{\ell-2}] \frac{1}{\ell-1} H'(x) G^{\ell-1}(x) =
\frac{3}{\ell-1} [x^{\ell-2}] (x+1)^{4\ell-2} =
\frac{3}{\ell-1} \binom{4\ell-2}{\ell-2},\]
which is equal to $\frac{1}{\ell} \binom{4 \ell - 2}{\ell - 1} $ for $\ell>1$.

\smallskip

\noindent \textit{Remark.}
This sequence of numbers is~\cite[A006632]{oeis}, where it appears with a reference to a paper by H.~N.~Finucan~\cite{finucan}.
In that paper, it counts the number of nested systems (``stackings'') of $\ell$ folders with $3$ compartments
such that exactly one folder is outer (``visible'').
There is a very simple bijection between two structures, see Figure~\ref{fig:folders2_new_2} for an example:
pairs of twins are converted into $3$-compartment folders;
the special red edge forms a pair with the outer part of $\mathbf{\Gamma}$,
and it is converted to the outer folder.
\begin{figure}[h]
$$\includegraphics[width=110mm]{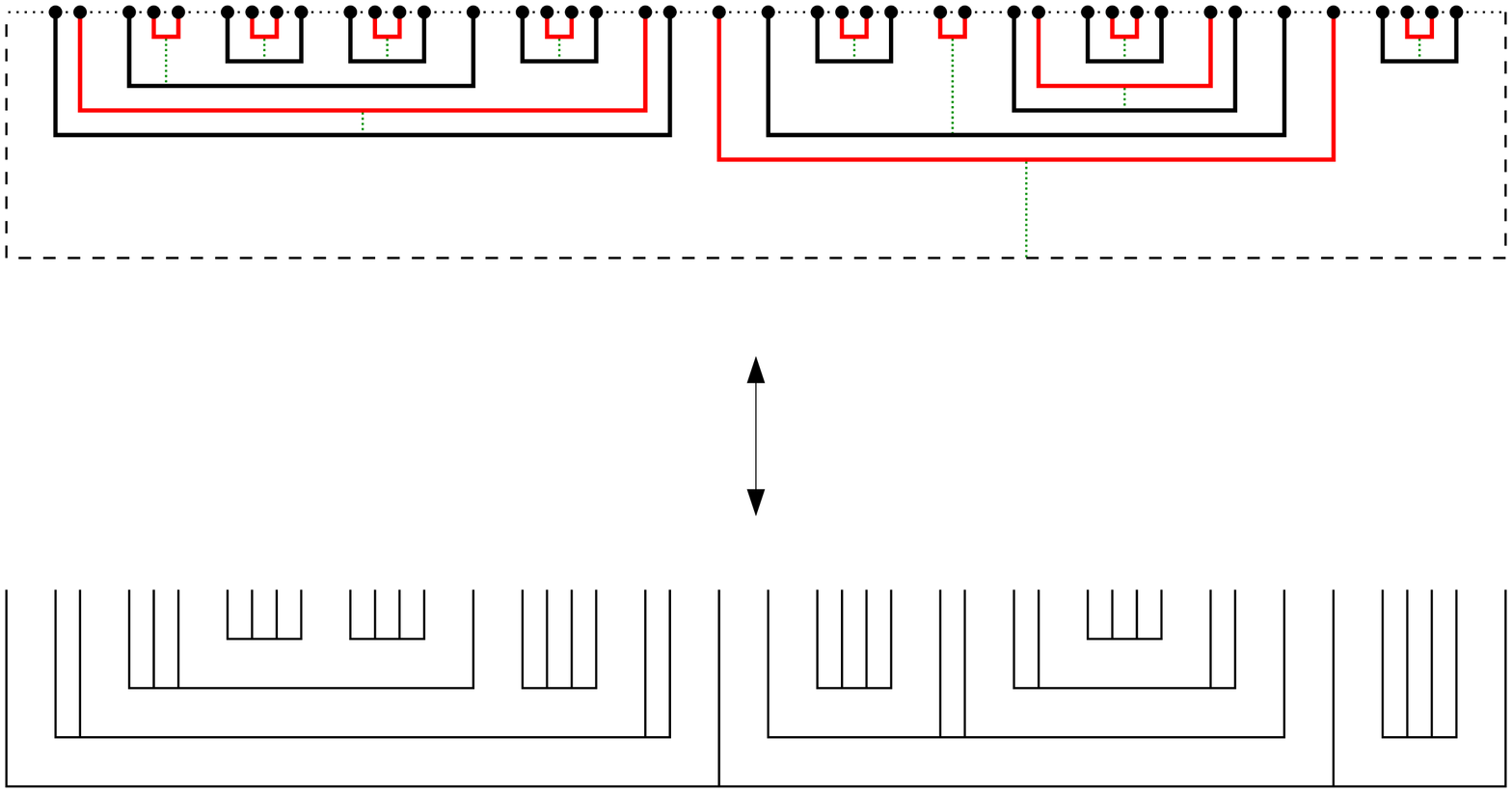}$$
\caption{An example illustrating the bijection between I-matchings of size $k=2\ell-1$ and
stackings of $\ell$ $3$-folders with only one outer folder.}
\label{fig:folders2_new_2}
\end{figure}

\smallskip

\noindent\textit{Proof of Theorem~\ref{thm:leaves_enum}~(1).}
The proof will be based on the previous one (notice the similarity of the expressions in these two theorems).
Essentially, we describe a way to convert I-matchings into L-matchings of odd size,
and take care of multiplicities.

Let $M$ be an I-matching of size $k \geq 3$.
Each black edge belongs to two faces, and, by Observation~\ref{thm:blue_black} (4),
each of these faces has exactly one red edge.
Such a triple of edges -- a black edge $e$ and the red edges incident to the faces incident to $e$ --
will be called a RBR-triple.\footnote{\textit{RBR} stands for red-black-red.}
By Observation~\ref{thm:blue_black} (3), there are $\ell-1$ black edges in $M$;
therefore, there are also $\ell-1$ RBR-triples.
Therefore, there are $\frac{\ell-1}{\ell}\binom{4\ell-2}{\ell-1}$
I-matchings of size $k$ with a marked RBR-triple.

Suppose that the endpoints of the edges that belong to an RBR-triple
are
(according to the cyclic order)
$Q_{1}, Q_{2}, Q_{3}, Q_{4}, Q_{5}, Q_{6}$.
Then the RBR-triple can be one of the following:
$\{Q_{1}Q_{2}, Q_{3}Q_{6}, Q_{4}Q_{5}\}$,
$\{Q_{1}Q_{4}, Q_{2}Q_{3}, Q_{5}Q_{6}\}$, or
$\{Q_{1}Q_{6}, Q_{2}Q_{5}, Q_{3}Q_{4}\}$.
It is easy to see that if we replace these edges by
either
$\{Q_{1}Q_{2}, Q_{3}Q_{4}, Q_{5}Q_{6}\}$
or
$\{Q_{2}Q_{3}, Q_{4}Q_{5}, Q_{6}Q_{1}\}$,
an L-matching is obtained.
Thus, we have obtained $2 \frac{\ell-1}{\ell}\binom{4\ell-2}{\ell-1}$ L-matchings.

However, each L-matching is obtained in this way exactly three times.
Indeed, by Proposition~\ref{thm:leaves_roots} (2), the root triple of an L-matching is determined uniquely.
It can be replaced by a RBR-triple in three ways, each of them producing an I-matching.
Therefore, the number of L-matchings of size $k$
(for odd $k$)
is $\frac{2}{3} \frac{\ell-1}{\ell} \binom{4 \ell - 2}{\ell - 1} $.
\end{proof}


\subsection{Strip Drawings and DB-components}\label{sec:strip}

In the following sections, we shall frequently use a special way to draw matchings -- \emph{strip drawings},
that were already used in the end of Section~\ref{sec:small_general}.
In such a drawing $\mathbf{\Gamma}$ is an axis-aligned rectangle $\mathbf{R}$, and all the points of $X_{2k}$ lie
on its horizontal sides (the lower side will be denoted by $\mathbf{L}$, the upper by $\mathbf{U}$).
The edges that connect a point from $\mathbf{L}$ with a point of $\mathbf{U}$ will be represented by vertical segments;
such edges will be called \textit{D-edges}.
In some cases, in order to achieve a drawing in which all the D-edges are vertical,
we'll move some points of $X_{2k}$ along $\mathbf{L}$ or $\mathbf{U}$.
If a D-edge connects the leftmost (respectively, the rightmost) points of $X_{2k}$
on $\mathbf{L}$ and on $\mathbf{U}$, we will assume that it lies on the left (respectively, the right)
side of $\mathbf{R}$.
The edges that connect neighboring points of $\mathbf{L}$ or of $\mathbf{U}$ will
be represented by horizontal segments that lie on $\mathbf{\Gamma}$;
such edges will be called \textit{B-edges}.\footnote{
\textit{D} and \textit{B} stand for ``diagonal'' and ''boundary'', since a B-edge is always a boundary edge,
and a D-edge is \textit{usually} a diagonal edge
(the exceptional situation is when it connects the leftmost or the rightmost points of $\mathbf{L}$ and $\mathbf{U}$).}
Edges that connect non-neighboring points of $\mathbf{L}$ or of $\mathbf{U}$ will be represented, as usually, by Jordan curves inside $\mathbf{O(\Gamma)}$.
The index of the leftmost point of $\mathbf{U}$ will be denoted by $z$,
and, as agreed earlier, the points are labeled cyclically clockwise.

Obviously, each matching can be represented by a strip drawing,
but we shall use them only for certain classes of matchings,
when such drawings can be made especially simple and clear.
As mentioned earlier, the fact that all the boundary edges
lie on $\mathbf{\Gamma}$ is inconsistent with our original definitions.
In particular, as a planar map, such a drawing ``looses'' all the boundary faces
(therefore it will be called a \textit{reduced map}).
However, strip drawings are very useful due to the following fact.
As mentioned above, a flippable set is a subset of the set of edges that belong to the same face.
On the other hand, a flippable set is always of size at least $2$.
Thus, reduced maps have no faces that cannot contribute to a flippable partition,
and, thus, the candidates for flippable sets will be clearly seen.


An \textit{element} in a strip drawing is a subset of edges 
that can be separated from other edges by straight lines.
We distinguish the following kinds of elements; they will be used later for describing of certain kinds of matchings.
Refer to Figure~\ref{fig:db_etc_flip}.
A \emph{DB-element} in an element of size $2$
that consists of a D-edge $d$ and a B-edge $b$.
There are four kinds of DB-elements, distinguished by their \textit{direction} and \textit{position} as follows.
The direction is $\mathrm{R}$ if $b$ is to the right of $d$, $\mathrm{L}$ if $b$ is to the left of $d$.
The position is $-$ if $b$ lies on $\mathbf{L}$, and $+$ if $b$ lies on $\mathbf{U}$.
A \textit{DBD-element} is an element of size $3$
that consists of two D-edges $d_1, d_2$, and one B-edge $b$ between them.
The position of a DBD-element is $-$ (respectively, $+$) if $b$ lies on $\mathbf{L}$ (respectively, on $\mathbf{U}$).
A \textit{B$^{2+1}$-element} is an element of size $3$
that consists of three B-edges: two on $\mathbf{L}$ and one on $\mathbf{U}$ (then its position is $-$), or vice versa (then its position is $+$).
An \textit{EDB-element} is an element of size $4$
that consists of three B-edges forming a B$^{2+1}$-element and a D-edge to the left or to the right of them.
The direction of an EDB-element is $\mathrm{R}$ (respectively, $\mathrm{L}$)
if the B-edges are to the right (respectively, to the left) of the D-edge;
its position agrees with that of the B$^{2+1}$ element.
Notice that DB-, EDB-, DBD- and B$^{2+1}$-elements are always flippable sets.
The next observation summarizes the effect of flipping these elements.

\begin{observation}\label{thm:db_flip}\mbox{}
\begin{enumerate}
\item The set obtained from a DB-element by flipping is a DB-element with the same position and different direction.
\item The set obtained from an EDB-element by flipping is an EDB-element with the same position and different direction.
\item The set obtained from a DBD-element by flipping is a B$^{2+1}$-element with the same position, and vice versa.
\end{enumerate}
\end{observation}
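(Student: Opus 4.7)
The plan is to reduce each of the three items to the combinatorial flip of Proposition~\ref{thm:flip}. For a given element I will label its $2m$ endpoints in clockwise cyclic order around $\mathbf{\Gamma}$ as $Q_1, \dots, Q_{2m}$ (with $m = 2$ for a DB-element, $m = 3$ for a DBD- or B$^{2+1}$-element, and $m = 4$ for an EDB-element), recognize the original set of edges as either the pattern $N = \{Q_1Q_2, Q_3Q_4, \dots, Q_{2m-1}Q_{2m}\}$ or the pattern $N' = \{Q_2Q_3, Q_4Q_5, \dots, Q_{2m}Q_1\}$ from Proposition~\ref{thm:flip}, and then read off the flip, which simply interchanges $N$ and $N'$. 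Each newly obtained pair is then translated back into D-edge/B-edge language by checking whether its two endpoints lie on the same side of $\mathbf{\Gamma}$ and whether their cyclic labels differ by~$1$.

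Before doing this, I would use the ``separable by straight lines'' defining property of an element to pin down the tight geometric shape each type forces on its endpoints: a DB-element occupies three consecutive points on one side of $\mathbf{\Gamma}$ and one point on the other; a DBD- or B$^{2+1}$-element occupies four consecutive points on one side and two consecutive points on the other; an EDB-element occupies five consecutive points on one side and three consecutive points on the other. Combined with the convention that labels increase rightward on $\mathbf{L}$ and leftward on $\mathbf{U}$, this fully determines the cyclic order of the $Q_i$'s from the direction/position data of the element. For example, a DB-element of direction $\mathrm{R}$ and position $-$ consists of $\{P_iP_j, P_{i+1}P_{i+2}\}$ with $P_i, P_{i+1}, P_{i+2}$ on $\mathbf{L}$ and $P_j$ on $\mathbf{U}$; setting $Q_1Q_2Q_3Q_4 = P_iP_{i+1}P_{i+2}P_j$ identifies it as $N'$, and the flip $N = \{P_iP_{i+1}, P_{i+2}P_j\}$ is a DB-element whose B-edge is still on $\mathbf{L}$ but now lies to the left of the D-edge, giving direction $\mathrm{L}$ and position $-$. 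Items (2) and (3) are the identical $N \leftrightarrow N'$ computation at $m = 3$ and $m = 4$ respectively, and the other choices of direction and position in each item follow by reflecting the strip vertically or horizontally.

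The main point needing care is the minimality step, namely why ``separable by straight lines'' really forces the consecutive-label structure above. This reduces to the observation that any extra point of $X_{2k}$ lying on the same side of $\mathbf{\Gamma}$ between two edges of an element would have to be matched inside the same cyclic arc (any edge leaving the arc would cross the convex hull of the element's endpoints), and a simple parity argument then rules out such intervening points in the minimal configurations under discussion. Once this is in hand, the rest of the proof is pure index-pushing against the $N \leftrightarrow N'$ pattern of Proposition~\ref{thm:flip}.
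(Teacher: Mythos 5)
Your proposal is correct and matches the paper's treatment: the paper offers no written proof of this Observation beyond the illustration in Figure~\ref{fig:db_etc_flip}, and your reduction to the $N\leftrightarrow N'$ exchange of Proposition~\ref{thm:flip}, after pinning down the consecutive-point structure forced on each element type by separability, is precisely the computation that figure depicts. The only nit is that under the paper's clockwise labeling (with $z$ the leftmost point of $\mathbf{U}$) the labels increase rightward on $\mathbf{U}$ and leftward on $\mathbf{L}$, the mirror of your stated convention; this merely swaps which of $N$, $N'$ each element instantiates and affects none of the conclusions.
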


See Figure~\ref{fig:db_etc_flip} for illustration.
Notice that in some cases we modify the point set in order to draw a D-edge as a vertical segment.
On the first strip, given elements are shown;
on the second, the elements obtained from them by flipping;
on the third, they are shown after modifying the point set.
\begin{figure}[h]
$$\includegraphics[width=130mm]{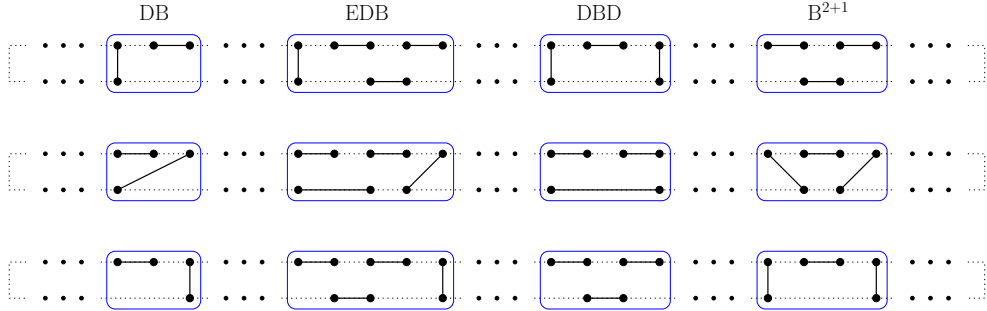}$$
\caption{DB-, EDB-, DBD-, and B$^{2+1}$-elements, and flipping them.}
\label{fig:db_etc_flip}
\end{figure}

The structure of some simple matchings can be partially described by their
\textit{pattern} --
a sequence of elements of these types
(to be read from left to right).
For example, we say that a strip drawing has {pattern} DBDB$^{2+1}$D if it consists of three D-edges $d_1, d_2, d_3$,
a B-edge between $d_1$ and $d_2$, and a B$^{2+1}$-element between $d_2$ and $d_3$.
Notice that the pattern does not determine a drawing uniquely since the labeling of points
and the position of B-edges is not indicated.



\subsection{Small components for even $k$ (Pairs)}\label{sec:small_even}


By Proposition~\ref{thm:even_never_isolated}, a matching of even size is never isolated.
As we shall show now, for any even $k$ there are matchings of size $k$ that belong to \emph{pairs} --
connected components of size $2$. Thus, we next define a family of
matchings and prove that they indeed form the small components of $\dcm_k$ for even values of $k$.

\medskip

\noindent\textbf{Definition.}
Let $k$ be an even number.
A \textit{DB-matching} of size $k$ is a matching
that can be represented by a strip drawing
with pattern $\mathrm{DBDB}\dots\mathrm{DB}$ --
that is, consists of $\ell \left( = \left\lceil \frac{k}{2} \right\rceil \right) $
R-directed DB-elements.
\medskip

A drawing as in this definition will be the \textit{standard drawing} for a DB-matching.
If instead of R-directed DB-elements we have L-directed DB-elements,
this is an \textit{upside-down drawing} of a DB-matching;
the standard one can be obtained from it by $180^\circ$ rotation.
The edges of the $i$th (from left to right) DB-element in the standard drawing of a DB-matching will be denoted by $d_i, b_i$.
The map of $M$ has $\ell$ inner faces and $\ell+1$ boundary faces.
The inner faces will be denoted by $D_1, D_2, \dots, D_{\ell}$:
for $1 \leq i \leq \ell-1$, $D_i$ is the face whose edges are $d_i, b_i, d_{i+1}$;
$D_\ell$ is the face whose edges are $d_\ell, b_\ell$.
The boundary faces will be denoted by $B_0, B_1, \dots, B_{\ell}$:
$B_0$ is the face whose only edge is $d_1$;
for $1 \leq i \leq \ell$, $B_i$ is the face whose only edge is $b_i$.

In a DB-matching of size $k \geq 4$, $\{d_1, b_1\}$ is an antiblock,
and $\{d_\ell, b_\ell\}$ is a block,
and there are no other separated pairs.
Therefore, the position
($-$ or $+$) of these extremal DB-elements can be chosen arbitrarily:
changing the position of $\{d_\ell, b_\ell\}$ does not change the matching,
and changing the position of $\{d_1, b_1\}$ results in a rotationally isomorphic matching.
For $k \geq 4$,
we shall always draw the antiblock as a DB-element of type $\zr +$,
and the block as a DB-element of type $\zr -$.
Different choices of position in all other DB-elements produce rotationally non-equivalent matchings.
Their positions will be encoded by a $\{-, + \}$-sequence $\chi = (x_1, x_2, \dots, x_{\ell-2})$,
where $x_i$ is the position of the $(i+1)$st DB-element.
The DB-matching of size $k$ with specified $\chi$ and $z$
(the label of the leftmost point on $\mathbf{U}$)
will be denoted by $\db(k, \chi, z)$.\footnote{Note that $k$ is determined by the length of $\chi$ and, therefore,
can be omitted. However, we find it convenient to include it in our notation.}

The dual trees of DB-matchings have the following structure
(we denote the vertices of $D(M)$ identically to the corresponding faces of the map of $M$):
There is a path $B_0D_1D_2\dots D_{\ell}$
(imagined as consisting of horizontal edges so that $B_0$ is on the left and $D_{\ell}$ is on the right);
and for each $i$, $1 \leq i \leq \ell$, a leaf $B_i$ is attached to $D_i$.
As explained above, by convention $B_1$ is attached to $D_1$ above the path,
and $B_\ell$ is attached to $D_\ell$ below the path;
and for $2 \leq i \leq \ell-1$,
$B_i$ can be attached to $D_i$ in two ways: either below or above the path.
See Figure~\ref{fig:small_even_tree_new_2}:
(a) shows the matching $\db(14, -++-+, 1)$ represented by its standard strip drawing;
(b) shows its dual tree;
(c) shows the general structure of the dual tree of DB-matchings
(dashed edges $D_iB_i$, $2 \leq i \leq \ell-1$, indicate that each of them can be either below or above the path $B_0 D_1\dots D_\ell$).

\begin{figure}[h]
$$\includegraphics[width=120mm]{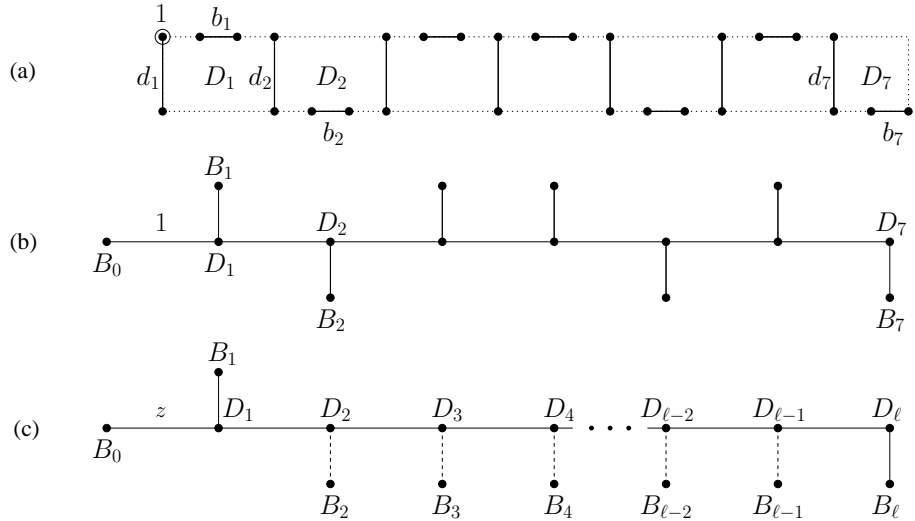}$$
\caption{
(a) The matching $\db(14, -++-+, 1)$.
(b) The dual tree of $\db(14, -++-+, 1)$.
(c) The general structure of the dual tree of DB-matchings.
}
\label{fig:small_even_tree_new_2}
\end{figure}

For a $\{-, +\}$-sequence $\chi$,
we denote by $\chi'$ the sequence obtained from $\chi$
by reversing and changing all the components,
and we denote $\delta(\chi) = \#_\chi (+) - \#_\chi (-)$.
For example, for $\chi=(++-++--+)$
we have $\chi'=(-++--+--)$
and $\delta(\chi)=2$.

\begin{theorem}\label{thm:small_even_struct}
Let $k$ be an even number.
A matching of size $k$ belongs to a pair in $\dcm_{k}$
if and only if
it is a DB-matching.
\end{theorem}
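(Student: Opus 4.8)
The plan is to reduce the statement to a fact about leaves and then invoke Theorem~\ref{thm:leaves_struct}. A \emph{pair} is a connected component of order $2$; it consists of two matchings, each of degree $1$, that are disjoint compatible to each other. Hence a matching $M$ belongs to a pair if and only if $M$ is a leaf and its unique neighbour $M'$ is a leaf as well (note $M'$ is necessarily distinct from $M$, and the component is then exactly $\{M,M'\}$). Since $M$ is a leaf it has a single flippable partition (Proposition~\ref{thm:flip}), and $M'$ is the matching obtained by flipping it; I will call $M'$ the \emph{flip} of $M$. By Theorem~\ref{thm:leaves_struct} a matching is a leaf precisely when it is an L-matching, so the whole statement is equivalent to: \emph{an even L-matching $M$ has an L-matching flip if and only if $M$ is a DB-matching.}

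For the direction ``$\Leftarrow$'' I would work in the standard strip drawing. First I would show that a DB-matching has a unique flippable partition, namely $\{\{d_1,b_1\},\dots,\{d_\ell,b_\ell\}\}$: the pair $\{d_\ell,b_\ell\}$ is a block whose inner edge is $b_\ell$, so by the argument in the proof of Observation~\ref{thm:obs_block_antiblock} the only flippable set containing $b_\ell$ is this block; since every flippable set lies inside a single inner face, the faces $D_\ell, D_{\ell-1},\dots,D_1$ then force $\{d_i,b_i\}$ one at a time. Flipping this partition turns every $\mathrm{R}$-directed DB-element into an $\mathrm{L}$-directed one by Observation~\ref{thm:db_flip}(1), so the flip $M'$ is an upside-down DB-matching, hence a DB-matching after a $180^\circ$ rotation, hence (by the same uniqueness argument) a leaf. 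Thus $M$ and $M'$ form a pair.

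The direction ``$\Rightarrow$'' is the heart of the matter, and I expect it to be the main obstacle. Here I must show that if $M$ is an even L-matching that is \emph{not} a DB-matching, then its flip is not a leaf. My plan is to isolate the structural feature that singles out DB-matchings among even L-matchings: a DB-matching is exactly an L-matching whose reduced map is a plain chain of DB-elements (equivalently, whose dual tree is the caterpillar $B_0D_1\cdots D_\ell$ carrying one pendant leaf per spine vertex). An even L-matching that is not a DB-matching must therefore contain, somewhere in its reduced map, a branching or a nesting -- concretely a DBD-, $\mathrm{B}^{2+1}$-, or EDB-element rather than a DB-element. I would then examine the flip at such a place: by Observation~\ref{thm:db_flip}(2,3) a DBD-element flips to a $\mathrm{B}^{2+1}$-element (three B-edges) and an EDB-element flips to an EDB-element, so the flip contains an antiblock that is flanked by a D-edge; exactly as in the Remark following Proposition~\ref{thm:few_blocks} and Figure~\ref{fig:3ex}, such an antiblock can be completed to a flippable set in two ways -- alone, or merged with the neighbouring D-edge. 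These two completions give two distinct flippable partitions, hence two neighbours, of the flip. Equivalently, one checks that the flip then contains no block at all and applies Proposition~\ref{thm:few_blocks}(1). Either way the flip has degree at least $2$ and is not a leaf, so $M$ is not in a pair.

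The delicate points, where I expect to spend the real work, are (i) proving the dichotomy ``DB-matching versus non-DB even L-matching'' in a usable form -- the caterpillar / ``chain of DB-elements'' characterization, which must be extracted from the recursive definition of L-matchings together with the block/antiblock-to-dual-tree correspondence of Observation~\ref{thm:block_antiblock_dual_elements}, and in particular the verification that an L-matching whose reduced map is not a pure DB-chain genuinely exhibits one of the named non-DB elements -- and (ii) checking that at such a configuration the two claimed regroupings of the flip are both legitimate flippable partitions (disjoint convex hulls, covering all edges). Everything else -- the reduction, the easy direction, and the final degree count -- follows routinely from Theorem~\ref{thm:leaves_struct}, Observation~\ref{thm:db_flip}, and Proposition~\ref{thm:few_blocks}.
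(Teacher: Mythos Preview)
Your reduction and the $[\Leftarrow]$ direction are fine and match the paper. The $[\Rightarrow]$ direction, however, has a real gap, and the paper takes a quite different route.

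First, both of your concrete mechanisms for showing that the flip $M'$ has degree $\ge 2$ break down. The unique flippable partition of an even L-matching $M$ consists of \emph{pairs} of twins (Proposition~\ref{thm:leaves_roots}); it never contains a DBD-, $\mathrm{B}^{2+1}$- or EDB-element as a single block. So you cannot invoke Observation~\ref{thm:db_flip}(2,3) to describe pieces of $M'$: when you flip $M$, you are flipping twin pairs, not the larger elements you name. The claim that $M'$ has no block is also false. Take the L-matching of size $6$ obtained from the standard DB-matching of size $4$ by attaching a $2$-branch at $B_2$ (case~(e) in the paper's analysis); concretely $M=\{P_1P_{12},P_2P_3,P_4P_{11},P_5P_{10},P_6P_9,P_7P_8\}$. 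Its flip is $M'=\{P_1P_2,P_3P_{12},P_4P_5,P_6P_7,P_8P_9,P_{10}P_{11}\}$, which contains the block $\{P_{12}P_3,P_1P_2\}$; so Proposition~\ref{thm:few_blocks}(1) does not apply. Relatedly, your structural dichotomy (``a non-DB even L-matching contains a DBD-, $\mathrm{B}^{2+1}$- or EDB-element'') is not a theorem about L-matchings: the twin pairs of the example above are an antiblock, a nested pair (two parallel D-edges), and a block --- none of your named non-DB elements appears.

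The paper's proof of $[\Rightarrow]$ avoids analysing $M'$ altogether. It runs an induction on $k$: any $M$ in a pair has a block $K$ (Proposition~\ref{thm:few_blocks}(1)), and if $N=M-K$ were not a DB-matching then, by induction, the component of $N$ has order $\ge 3$, whence the component of $M$ has order $\ge 3$ by Proposition~\ref{thm:block_anti_connected} --- contradiction. The remaining case, $N$ a DB-matching of size $k-2$, is dispatched by a short case analysis on where the $2$-branch $D(K)$ is attached in the caterpillar $D(N)$: either the attachment is at $D_{\ell-1}$ and $M$ is itself a DB-matching, or one can peel off a \emph{different} separated pair $P$ from $M$ so that $M-P$ is visibly not a DB-matching (it acquires a $3$- or $4$-branch, or a vertex of degree $4$), reducing to the previous contradiction. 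The key lemma you are missing is precisely Proposition~\ref{thm:block_anti_connected}, which transfers information about component size across the insertion of a separated pair and makes the induction go through without ever computing $M'$.
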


\begin{proof} For $k=2$ the statement is obvious. Thus, we assume $k \geq 4$.

\smallskip

$[\Leftarrow]$
Assume that $M$ is a DB-matching of size $k$.
First we show that it is an L-matching.
The rightmost DB-element of $M$, $K=\{ d_\ell, b_\ell \}$, is a block.
The matching $M-K$ is also a DB-matching, and, therefore it is an L-matching by induction.
Therefore, $M$ is also an L-matching, that is, it has degree $1$ in $\dcm_k$.
Its only flippable partition consists of the DB-elements $\{ d_i, b_i \}$.

Denote the only neighbor of $M$ by $M'$.
By Observation~\ref{thm:db_flip}, $M'$ is obtained from $M$
by replacing each of its DB-elements by the L-directed DB-element of the same position.
This means that $M'$, drawn on the same strip drawing,
is also a DB-matching, but drawn upside down.
In order to obtain its standard representation, we rotate the drawing.
$\chi$ is replaced then by $\chi'$,
and $z$ by the label of the rightmost point on $\mathbf{L}$
in the standard drawing of $M$, which is
$z' = z + k + \delta(\chi)$.\footnote{Indeed, 
let $u=\#_\chi (+)$, $d=\#_\chi (-)$.
Then
the number of points on $\mathbf{U}$
is $3u+d =
 2(u+d) + (u-d)= k + \delta(\chi)$.}
Thus, we obtain $M'=\db(k, \chi', z')$.
See Figure~\ref{fig:small_even_ex_new} for an illustration
(the flippable sets are marked by blue color; the asterisk indicates an upside down drawing).
\begin{figure}[h]
$$\includegraphics[width=130mm]{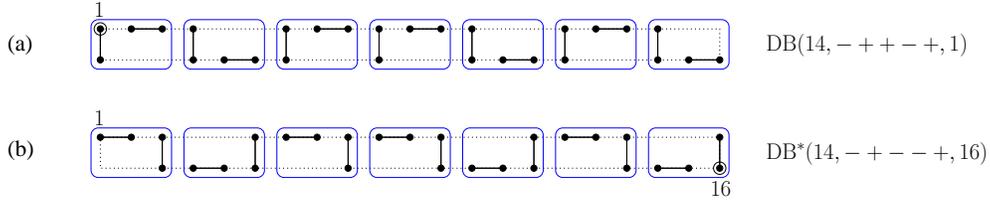}$$
\caption{Two DB-matchings forming a pair: (a) $\db(14, -++-+, 1)$, (b) $\db(14, -+--+, 16)$ (drawn upside down).}
\label{fig:small_even_ex_new}
\end{figure}

Since $M'$ is also a DB-matching, it is adjacent to only one matching, namely, to $M$.
Thus, $M$ and $M'$ form a pair in $\dcm_k$.

\smallskip

$[\Rightarrow]$
Assume that $M$ belongs to a pair.
$M$ has at least one block, as otherwise it is adjacent to at least two distinct matchings by Proposition~\ref{thm:few_blocks}~(1).
Fix a block $K$ in $M$, and denote $N=M-K$.
If $N$ is not a DB-matching, then, by induction and by Proposition~\ref{thm:even_never_isolated},
it is connected (by a path) to at least two matchings.
Then $M$ is connected (by a path) to at least two matchings by Proposition~\ref{thm:block_anti_connected}, and this is a contradiction.

Now assume that $N$ is a DB-matching (of size $k-2$).
We shall see that either $M$ is a DB-matching,
or $M$ can be decomposed in a different way, $M=L+P$,
where $P$ is a separated pair, and $L$ is \textbf{not} a DB-matching
(which will be shown by indicating an element which never occurs in DB-matchings).
In the former case this completes the proof,
in the latter case 
we obtain a contradiction as above (with $L$ in role of $N$ and $P$ in the role of $K$).

Consider the dual tree of $N$.
Then $D(K)$, the part that corresponds to $K$,
is a $2$-branch attached to $D(N)$ in some point (see Figure~\ref{fig:even_cases_new_1}).
Label the points of $D(N)$ in accordance to our usual notation,
as in Figure~\ref{fig:small_even_tree_new_2}
(notice that it consists of $\ell-1$ rather than of $\ell$ DB-elements).
Now we have the following subcases.
\begin{enumerate}
\renewcommand{\labelenumi}{(\alph{enumi})}
\item \textbf{$D(K)$ is attached to $D(N)$ at $B_i$, $0 \leq i \leq \ell-2$.}
Let $P$ be the block $D_{\ell-2}D_{\ell-1}B_{\ell-1}$,\footnote{For the sake of brevity,
we write ``the block/the antiblock $ABC$''
instead of
``the block/the antiblock
corresponding to
the $2$-branch/the V-shape $ABC$''.
} and let $L=M-P$.
Then $D(L)$ has a $3$-branch,
and, therefore, $L$ is not a DB-matching.
\item \textbf{$D(K)$ is attached to $D(N)$ at $D_i$, $1 \leq i \leq \ell-3$.}
Let $P$ be the block $D_{\ell-2}D_{\ell-1}B_{\ell-1}$, and let $L=M-P$.
Then $D(L)$ has a vertex of degree $4$, and, therefore, $L$ is not a DB-matching.
\item \textbf{$D(K)$ is attached to $D(N)$ at $D_{\ell-2}$.}
Let $P$ be the antiblock $B_0D_1B_1$, and let $L=M-P$.
Then $D(L)$ has a vertex of degree $4$, and, therefore, $L$ is not a DB-matching.
\item \textbf{$D(K)$ is attached to $D(N)$ at $D_{\ell-1}$.}
Then $M$ is a DB-matching.
\item \textbf{$D(K)$ is attached to $D(N)$ at $B_{\ell-1}$.}
Let $P$ be the antiblock $B_0D_1B_1$, and let $L=M-P$.
Then $D(L)$ has a $4$-chain, and, therefore, $L$ is not a DB-matching.
\end{enumerate}

These cases are shown in Figure~\ref{fig:even_cases_new_1}.
$D(K)$ is shown by green when $M$ is a DB-matching,
and by blue when a contradiction is obtained.
In this latter case, the element corresponding to $P$ is marked by red.
The point where $D(K)$ is attached to $D(N)$ is marked by a circle.
\begin{figure}[h]
$$\includegraphics[width=160mm]{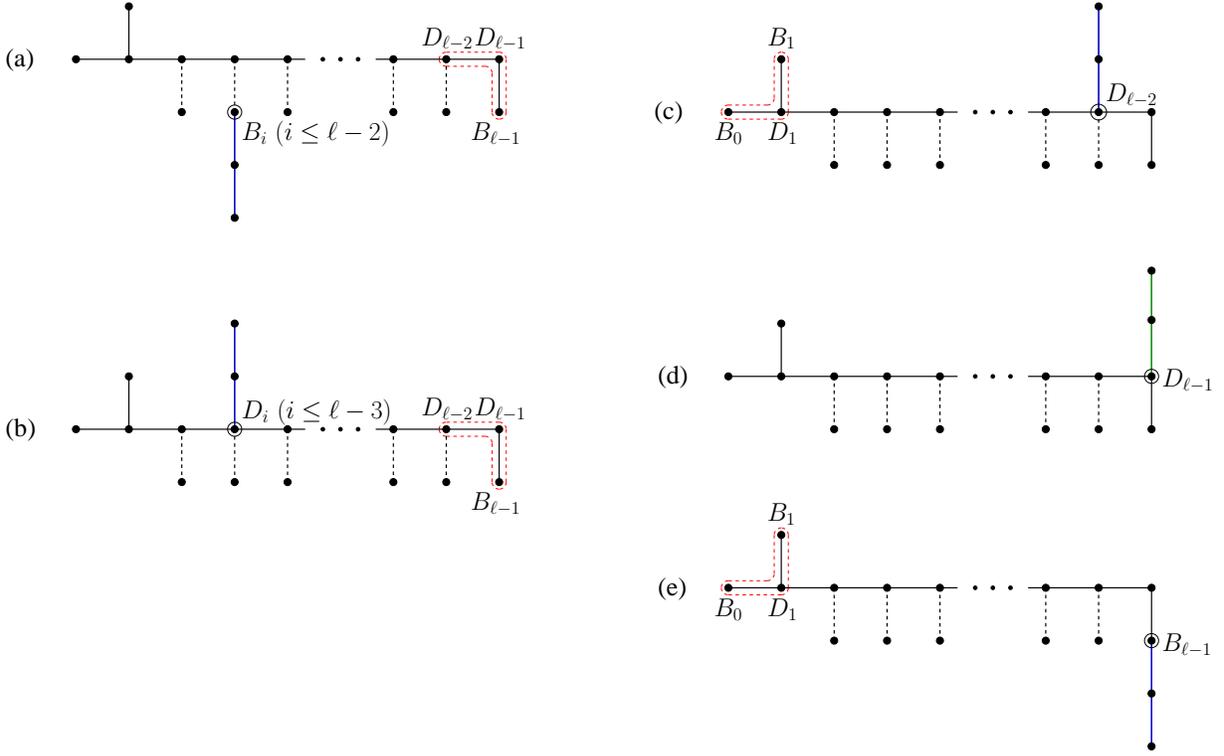}$$
\caption{Illustration to the proof of Theorem~\ref{thm:small_even_struct}.}
\label{fig:even_cases_new_1}
\end{figure}
\end{proof}



\begin{theorem}\label{thm:bd_enum}
The number of DB-matchings of size $k$ is $\ell \cdot  2^\ell$.
\end{theorem}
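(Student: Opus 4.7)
The plan is to show that the map $(\chi, z) \mapsto \db(k, \chi, z)$ is a bijection between the parameter set $\{-, +\}^{\ell-2} \times \{1, 2, \dots, 2k\}$ and the set of DB-matchings of size $k$, whence the count $2^{\ell-2} \cdot 2k = \ell \cdot 2^\ell$ follows.

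Surjectivity is immediate from the definition of $\db(k, \chi, z)$ given just after the definition of a DB-matching: every DB-matching of size $k$ admits a standard strip drawing consisting of $\ell$ R-directed DB-elements, with the leftmost in position $\zr +$ and the rightmost in position $\zr -$; the positions of the $\ell - 2$ intermediate DB-elements record $\chi$, and the label of the leftmost point of $\mathbf{U}$ is $z$. So every DB-matching equals $\db(k, \chi, z)$ for some parameter pair.

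The main step is injectivity, and I would base it on the observation (stated in the paragraph immediately after the definition of DB-matchings) that a DB-matching of size $k \geq 4$ contains exactly one antiblock $\{d_1, b_1\}$ and exactly one block $\{d_\ell, b_\ell\}$, and no other separated pairs. Given $M$, this uniqueness forces the extremal DB-elements of any standard drawing of $M$ to be precisely the unique antiblock and block of $M$. In particular, if the unique antiblock is $\{P_a P_{a+1}, P_{a+2} P_{a+3}\}$, then the $\zr +$ convention on the leftmost DB-element pins $z = a + 1$; and with $z$ and both extremal DB-elements fixed, the positions (sides of the strip) of the $\ell - 2$ middle DB-elements read off $\chi$ component by component. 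Thus distinct $(\chi, z)$ produce distinct labeled matchings.

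With the bijection in hand the count is immediate: $2^{\ell-2} \cdot 2k = 2^{\ell-2} \cdot 4\ell = \ell \cdot 2^\ell$. The only step that needs separate care is the degenerate case $k = 2$ ($\ell = 1$), where the intermediate part of the standard drawing is empty and the extremal block/antiblock structure collapses; there one verifies directly that both matchings of size $2$ are DB-matchings (consistent with the treatment of $k = 2$ in the proof of Theorem~\ref{thm:small_even_struct}), matching the predicted value $\ell \cdot 2^\ell = 2$. The main -- and fairly mild -- obstacle of the whole argument is this injectivity step, which rests entirely on the uniqueness of the block and the antiblock in a DB-matching of size $k \geq 4$.
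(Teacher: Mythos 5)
Your proposal is correct and follows essentially the same route as the paper: the paper also counts the parameter pairs $(\chi,z)$ ($2^{\ell-2}\cdot 2k$ choices) and argues that each DB-matching is obtained exactly once, citing the absence of non-trivial symmetries where you instead justify injectivity explicitly via the uniqueness of the block and the antiblock. Your version supplies slightly more detail (including the degenerate case $k=2$) but is the same argument.
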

\begin{proof}
For a DB-matching of size $k$,
$\chi$ can be chosen in $2^{\ell-2}$ ways,
and $z$ in $2k=4\ell$ ways.
Since the structure of a DB-matching has no non-trivial symmetries,
each DB-matching is counted in this way exactly once.
Therefore, there are $2^{\ell-2} \cdot 4\ell = \ell \cdot 2^\ell$ DB-matchings.
\end{proof}

The number of small components in $\dcm_k$ is obtained now immediately.

\begin{corollary}\label{thm:pairs_enum}
The number of small components in $\dcm_k$ is $\ell \cdot  2^{\ell-1}$.
\end{corollary}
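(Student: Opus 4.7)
The plan is very short because this is a direct counting corollary that chains two earlier results. From Theorem~\ref{thm:small_even_struct} we know that the small components of $\dcm_k$ (with $k$ even) are precisely the connected components of order $2$, each consisting of a pair of mutually disjoint compatible DB-matchings. In particular, every DB-matching lies in exactly one such pair, and every such pair contains exactly two DB-matchings. Hence the map that sends a small component to its unordered pair of DB-matchings is a $2$-to-$1$ correspondence between the set of DB-matchings and the set of small components.

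First I would make this $2$-to-$1$ correspondence explicit: given a DB-matching $M=\db(k,\chi,z)$, the proof of the $[\Leftarrow]$ direction of Theorem~\ref{thm:small_even_struct} identifies its unique neighbor $M'=\db(k,\chi',z')$, which is also a DB-matching (and is distinct from $M$ because $\chi\neq\chi'$ in general, but more fundamentally because they are different vertices of the same pair). Thus each small component contributes exactly $2$ DB-matchings to the count, and every DB-matching arises this way.

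Next I would invoke Theorem~\ref{thm:bd_enum}, which states that the number of DB-matchings of size $k$ equals $\ell\cdot 2^\ell$. Dividing by $2$ to account for the $2$-to-$1$ correspondence gives the claimed count $\ell\cdot 2^{\ell-1}$ for the number of small components.

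There is really no obstacle here: the only thing to double-check is that the map really is $2$-to-$1$ and not, say, collapsing to $1$-to-$1$ for some exceptional self-paired matching. But since the pair consists of two distinct vertices of $\dcm_k$ (by definition of a component of order $2$), the factor of $2$ is correct, and the arithmetic is immediate.
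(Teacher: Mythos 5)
Your argument is exactly the paper's (the corollary is stated there as following "immediately" from Theorem~\ref{thm:small_even_struct} and Theorem~\ref{thm:bd_enum}): small components are pairs whose two vertices are both DB-matchings, so dividing the count $\ell\cdot 2^{\ell}$ by $2$ gives $\ell\cdot 2^{\ell-1}$. Your extra check that no component is ``self-paired'' is a sensible sanity check and the proposal is correct.
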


\section{Medium components}\label{sec:middle}
\subsection{Medium components for odd $k$}\label{sec:middle_odd}

\noindent\textbf{Definition.}
Let $k \geq 3$ be an odd number.
A \textit{DBD-matching} of size $k$ is a matching
that can be represented by a strip drawing
with pattern $\mathrm{DBDB}\dots\mathrm{DBD}$.
In other words, its strip drawing can be obtained from the standard strip drawing of a DB-matching of size $k-1$
by adding one more D-element that connects the rightmost points of $\zbl$ and $\zbu$.

\medskip

For DBD-matchings, we adopt the notations and the conventions developed for DB-matchings
and their standard drawings.
One difference is that this time the edges of (the rightmost) face $D_{\ell-1}$ are $d_{\ell-1}, b_{\ell-1}, d_{\ell}$.
Similarly to DB-matchings, 
it will be assumed without loss of generality that $b_1$ lies on $\zbu$, and $b_{\ell-1}$ lies on $\zbl$,
and the position of other $b_i$s will be specified by
a $\{-,+\}$-sequence $\chi$ (which is now of length $\ell-3$).
A DBD-matching with specified $\chi$ and $z$ will be denoted by $\zsc(k, \chi, z)$.
Notice, however, that due to a symmetry of the structure each DBD-matching is represented twice in this form:
$\zsc(k, \chi, z) = \zsc(k, \chi', z')$
(or, more precisely,
the standard drawing of $\zsc(k, \chi, z)$
is the upside down drawing of $ \zsc(k, \chi', z')$),
where $\chi'$ and $z'$ are defined as for DB-matchings.
See Figure~\ref{fig:middle_odd_tree_new}:
(a) shows the matching $\zsc(15, ++--+, 1)$ represented by a standard strip drawing
(this matching is also $\zsc(15, -++--, 17)$ drawn upside down),
(b) shows the dual tree of $\zsc(15, ++--+, 1)$,
(c) shows the general structure of the dual tree of DBD-matchings.

\begin{figure}[h]
$$\includegraphics[width=140mm]{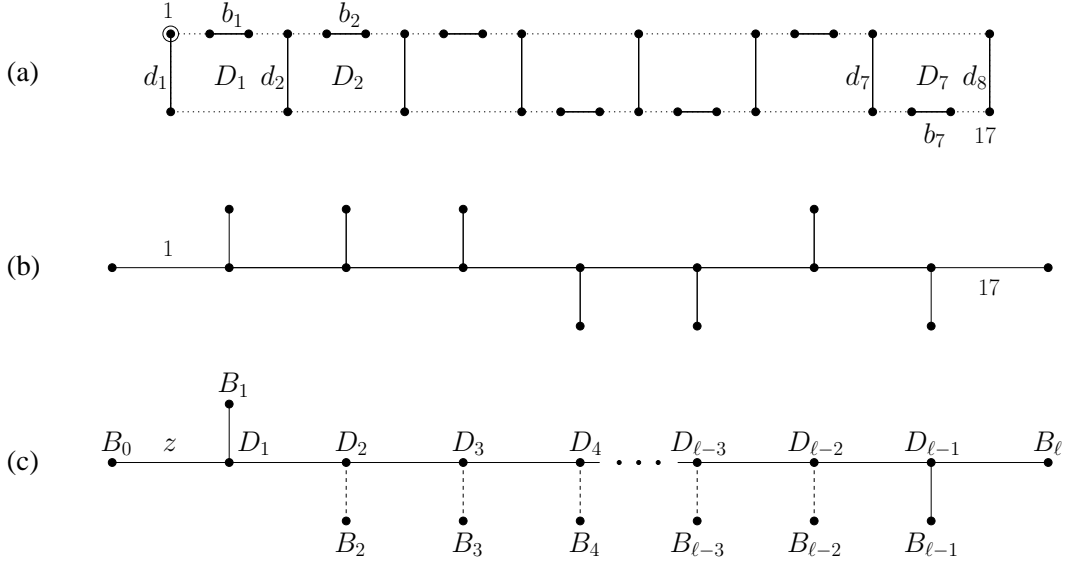}$$
\caption{DBD-matchings:
(a) $\zsc(15, ++--+, 1)$;
(b) The dual tree of $\zsc(15, ++--+, 1)$.
(c) The general structure of the dual tree.}
\label{fig:middle_odd_tree_new}
\end{figure}

\begin{proposition}\label{thm:middle_odd_struct}
Let $M$ be a DBD-matching of size $k$. Then:
\begin{enumerate}
  \item $M$ has exactly $\ell-1$ neighbors (where $\ell = \left\lceil \frac{k}{2} \right\rceil$);
  \item All the neighbors of $M$ are leaves.
\end{enumerate}
  Thus, the connected component that contains $M$ is a star of order $\ell$.
\end{proposition}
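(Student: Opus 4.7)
I would prove the two parts separately; the star structure of the component then follows immediately from~(1) and~(2). Part~(1) I would obtain from a direct enumeration of the flippable partitions of $M$, based on the face structure of its DBD strip drawing. Part~(2) I would prove by induction on $\ell$, reducing each neighbor $M'_s$ to the neighbor of a strictly smaller DBD-matching by peeling off a block appearing at each ``extreme'' of the flip.

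\textbf{Part (1).} Since each B-edge $b_i$ lies on the boundary of a unique inner face, namely $D_i$, the flippable set containing $b_i$ in any flippable partition must be a subset of $\{d_i,b_i,d_{i+1}\}$ that contains $b_i$; since flippable sets have size at least~$2$, it must additionally contain $d_i$, $d_{i+1}$, or both. The extreme D-edges $d_1,d_\ell$ are forced into the sets of $D_1,D_{\ell-1}$ respectively, while each interior $d_j$ ($2\le j\le \ell-1$) must be assigned to exactly one of its two incident faces $D_{j-1},D_j$. A short case analysis shows that valid assignments are parametrized by a single ``switch'' $s\in\{2,\ldots,\ell\}$: face $D_{s-1}$ is assigned the full DBD-element $\{d_{s-1},b_{s-1},d_s\}$, each face $D_i$ with $i<s-1$ is assigned $\{d_i,b_i\}$, and each face $D_i$ with $i>s-1$ is assigned $\{b_i,d_{i+1}\}$. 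This gives exactly $\ell-1$ flippable partitions; by Proposition~\ref{thm:flip} distinct partitions yield distinct neighbors, so $M$ has exactly $\ell-1$ neighbors.

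\textbf{Part (2).} For each $s$ I would show $M'_s$ is an L-matching, hence a leaf by Theorem~\ref{thm:leaves_struct}. I would use induction on $\ell$; for $\ell=2$ the unique neighbor is a ring of size~$3$. The key observation for the inductive step is that the extreme DB-sets $\{d_1,b_1\}$ and $\{b_{\ell-1},d_\ell\}$ are antiblocks on four cyclically consecutive points (both edges being boundary edges fitting the antiblock pattern), and their flips are blocks of $M'_s$ lying on those same four points. Such a block appears in $M'_s$ whenever the corresponding extreme is not absorbed into the DBD, i.e., when $s\ne 2$ for the left extreme and $s\ne \ell$ for the right extreme. Let $N$ be the DBD-matching obtained from $M$ by deleting these extreme pairs; then $N$ has size $k-2$ or $k-4$, and the remaining sets of the $s$-partition of $M$ restrict exactly to the flippable partition of $N$ with switch $s_N$ equal to $s$ (Case $s=2$), to $\ell-1$ (Case $s=\ell$), or to $s-1$ (Case $2<s<\ell$). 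One verifies that $M'_s$ is obtained from $N'_{s_N}$ by inserting the flipped extreme block(s) back at the cyclic positions where the extreme edges of $M$ were removed. By the induction hypothesis $N'_{s_N}$ is an L-matching, and inserting a block into an L-matching again yields an L-matching by the recursive definition; hence $M'_s$ is an L-matching.

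\textbf{Main obstacle.} The principal effort is the bookkeeping for the three sub-cases: checking that after deleting the extreme pairs the remainder $N$ is itself a DBD-matching of the claimed size, that the shifted switch $s_N$ lies in the valid range so the induction hypothesis applies, and that the flipped extreme blocks sit on four cyclically consecutive points so that the ``insert a block'' step in the L-matching recursion legitimately applies. Once these verifications are in place, the reduction $M'_s \mapsto N'_{s_N}$ drives the induction and Theorem~\ref{thm:leaves_struct} closes the argument.
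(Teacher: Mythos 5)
Your proof is correct. Part (1) is essentially the paper's argument: both enumerate the flippable partitions of $M$ and find they are parametrized by the position of the unique size-three set, which must be a full DBD-element $\{d_{s-1},b_{s-1},d_s\}$; the paper gets existence of that set from the parity of $k$ and uniqueness of the remainder from the fact that the flanking pieces are DB-matchings, while you derive the same ``one switch'' structure by a local assignment analysis of the D-edges --- a cosmetic difference. Part (2) is where you genuinely diverge. The paper disposes of it in one sentence, asserting inductively that the neighbor (a DBDL-matching, of pattern $\mathrm{BD}\dots\mathrm{BD}\,\mathrm{B}^{2+1}\,\mathrm{DB}\dots\mathrm{DB}$) has a unique flippable partition consisting of $\ell-2$ DB-elements and one $\mathrm{B}^{2+1}$-element, hence degree $1$. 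You instead verify that the neighbor satisfies the recursive definition of an L-matching --- the flips of the extreme antiblocks $\{d_1,b_1\}$ and $\{b_{\ell-1},d_\ell\}$ are blocks on four consecutive points, which you peel off to reduce to the neighbor of a smaller DBD-matching --- and then invoke Theorem~\ref{thm:leaves_struct}. Your route reuses the machinery of Section~\ref{sec:leaves} and makes the induction explicit where the paper leaves it to the reader; the cost is the three-way bookkeeping on the shifted switch $s_N$, which does check out (note that the case $2<s<\ell$ deletes two pairs at once and so needs strong induction on $\ell$, or can be simplified by peeling only one extreme, which is always possible when $\ell\geq 3$). Both arguments are sound, and the star structure follows from (1) and (2) exactly as you say.
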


\begin{proof}\mbox{}
\begin{enumerate}
\item
Let $M'$ be a (supposed) neighbor of $M$.
Consider the corresponding flippable partition of $M$.
Its members can be of size at most $3$
because inner faces of $M$ have at most three edges.
Since $k$ is odd, there is at least one set of size $3$ in the flippable partition,
which must be a DBD-element $\{d_j, b_j, d_{j+1}\}$ ($1 \leq j \leq \ell-1$).
The parts of $M$ to the left and to the right of this DBD-element
are DB-matchings (if non-empty), and, therefore,
upon the choice of a DBD-element that belongs to a flippable partition,
the construction of a
disjoint compatible matching can be completed in a unique way.
Since $M$, with this flippable partition (shown by square brackets) has the pattern
\[
\underbrace{\mathrm{[DB] \dots [DB]}}_{(j-1)\times\mathrm{DB}} \mathrm{[DBD]}  \underbrace{\mathrm{[BD] \dots [BD]}}_{(\ell-1-j)\times\mathrm{BD}},
\]
the matching $M'$ determined by flipping the $j$th DBD-element has by Observation~\ref{thm:db_flip} the pattern
\[
\underbrace{\mathrm{[BD] \dots [BD]}}_{(j-1)\times\mathrm{BD}}  \mathrm{[B^{2+1}]} \underbrace{\mathrm{[DB] \dots [DB]}}_{(\ell-1-j)\times\mathrm{DB}}.
\]
The position of B-edges of $M'$ matches that of $M$.
Denote this matching $M'$ by $\zsl(k, j, \chi, z)$.

The dual tree of $M'=\zsl(k, j, \chi, z)$
is obtained from that of $M=\zsc(k, \chi, z)$
by erasing the edges $B_0D_1$ and $D_{\ell-1}B_{\ell}$, 
and attaching two additional leaves, one below the path and one above it, to $D_{j}$.
The edge side $D_1B_1$ is labeled by $z$.

Since we have $\ell-1$ ways to choose the DBD-element that belongs to a flippable partition,
$M$ has $\ell-1$ neighbors.

\item
We see inductively that the only flippable partition of a DBDL-matching consists
of $\ell-2$ DB-elements and one $\mathrm{B^{2+1}}$-element.
Therefore, it has only one neighbor, and, thus, it is an L-matching.

\end{enumerate}
\end{proof}

Figure~\ref{fig:middle_odd_ex_new_1}
shows the matching $\zsc(11, ++-, 1)$,
its neighbors $\zsc(11, j, ++-, 1)$, $1\leq j\leq 5$,
and their dual trees.
For the DBDL-matchings, the flippable sets
are marked by a blue box.

\begin{figure}[h]
$$\includegraphics[width=150mm]{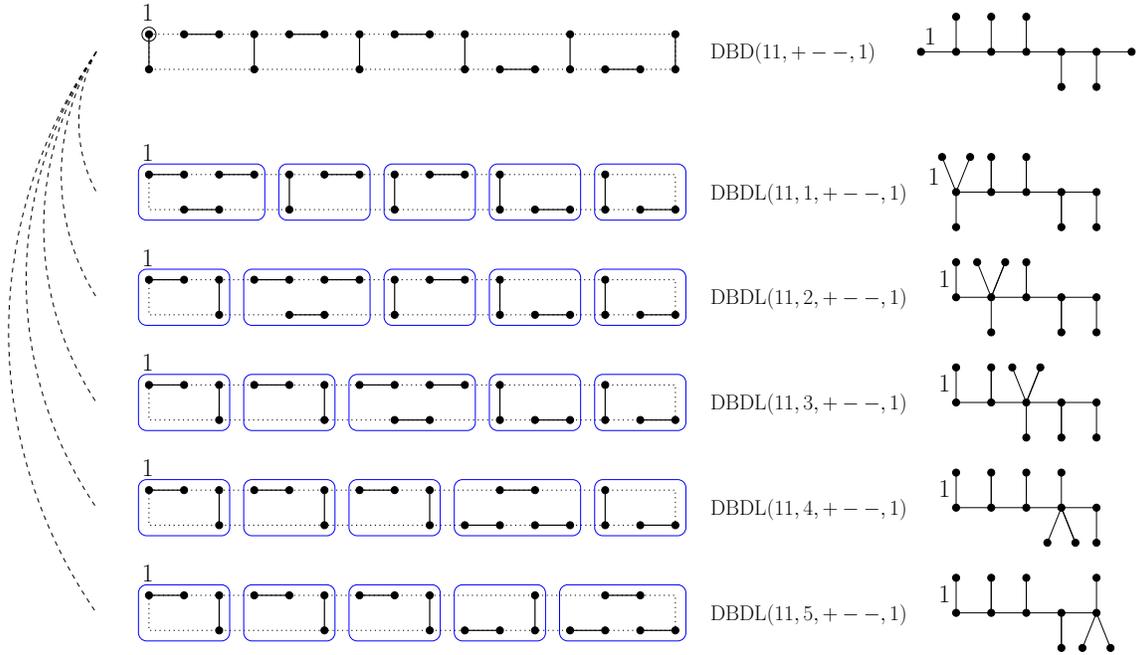}$$
\caption{The matching $\zsc(11, ++-, 1)$, its neighbors, and their dual trees.}
\label{fig:middle_odd_ex_new_1}
\end{figure}


\begin{proposition}\label{thm:middle_odd_enum}
The number of DBD-matchings of size $k$ is $(2\ell-1)\cdot 2^{\ell - 3}$.
\end{proposition}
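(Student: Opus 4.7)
The plan is to parametrize DBD-matchings by pairs $(\chi, z)$ with $\chi \in \{-, +\}^{\ell-3}$ and $z \in \{1, 2, \ldots, 2k\}$ as in the definition, count these pairs, and divide by the redundancy factor $2$ coming from the identification $\zsc(k, \chi, z) = \zsc(k, \chi', z')$ noted just before Figure~\ref{fig:middle_odd_tree_new}. The raw count of ordered pairs is immediate: $2^{\ell-3} \cdot 2k = (2\ell - 1) \cdot 2^{\ell-2}$.

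Next, I would show that the map $(\chi, z) \mapsto \zsc(k, \chi, z)$ is exactly two-to-one. Surjectivity is clear from the definition. The identity $\zsc(k, \chi, z) = \zsc(k, \chi', z')$ already exhibits two pairs in each fiber, provided $(\chi, z) \neq (\chi', z')$. To see that there are no further coincidences, I would invoke Observation~\ref{thm:dual}: a matching corresponds bijectively to a combinatorial embedding of its dual tree with one marked edge side. The dual tree of any DBD-matching is the specific caterpillar described after Proposition~\ref{thm:middle_odd_struct}, and I claim that the only non-trivial automorphism of this combinatorial embedding is the path reversal $D_i \mapsto D_{\ell - i}$ combined with the top/bottom side flip. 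That automorphism is precisely what the upside-down drawing realizes, so no additional pair can yield the same matching.

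To confirm the fiber has size exactly two (rather than one), I would verify $z \not\equiv z' \pmod{2k}$. Upon a $180^{\circ}$ rotation of the strip drawing, the new leftmost point of $\zbu$ is the old rightmost point of $\zbl$, so $z' - z \equiv n_U \pmod{2k}$, where $n_U$ is the number of points on $\zbu$ in the standard drawing. Each of the $\ell$ D-edges contributes one point to $\zbu$, while each $+$-positioned B-edge contributes two; since $b_1$ is fixed on $\zbu$, this yields $n_U = \ell + 2(\#_\chi(+) + 1)$, and hence $\ell + 2 \leq n_U \leq 3\ell - 4$. For $\ell \geq 3$ this lies strictly between $0$ and $2k = 4\ell - 2$, so $z' \not\equiv z$. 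Dividing the raw count by $2$ then gives $(2\ell-1) \cdot 2^{\ell-2}/2 = (2\ell-1) \cdot 2^{\ell-3}$, as required.

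The main obstacle is justifying that the DBD-caterpillar admits no automorphisms of its combinatorial embedding beyond the global path-reversal-plus-side-flip. Intuitively the conventions fixing $b_1$ on $\zbu$ and $b_{\ell-1}$ on $\zbl$ have already absorbed the two independent end-symmetries into the single global one, but making this precise requires inspecting the cyclic orders $\phi(D_1)$ and $\phi(D_{\ell-1})$ and checking that any embedding automorphism must send the spine to itself (which is forced since its endpoints are the unique vertices of degree $3$ carrying two leaves each).
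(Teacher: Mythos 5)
Your proposal is correct and follows essentially the same route as the paper: count the $2^{\ell-3}\cdot 2k$ parameter pairs $(\chi,z)$ and divide by $2$ because of the identification $\zsc(k,\chi,z)=\zsc(k,\chi',z')$. The only difference is that you carefully justify the two-to-one claim (via the automorphism of the caterpillar embedding and the check that $z'\not\equiv z \pmod{2k}$), which the paper simply asserts with the phrase that this is ``the only way to represent a DBD-matching by standard strip drawings in several ways''; your added verification is sound and consistent with the paper's example $\zsc(15,++--+,1)=\zsc(15,-++--,17)$.
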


\begin{proof}
For a DBD-matching of size $k$,
$\chi$ can be chosen in $2^{\ell - 3}$ ways,
and $z$ in $2k=2(2\ell-1)$ ways.
However, as explained above, $\zsc(k, \chi, z) = \zsc(k, \chi', z')$,
and this is the only way to represent a DBD-matching by a standard strip drawings in several ways.
Therefore, each DBD-matching is represented in this way exactly twice.
It follows that there are $(2\ell-1) \cdot 2^{\ell - 3}$ DBD-matchings.
\end{proof}

\begin{corollary}\label{thm:stars_enum}
The number of connected components of $\dcm_k$
that contain DBD- and DBDL-matchings is $(2\ell-1) \cdot 2^{\ell - 3}$.
\end{corollary}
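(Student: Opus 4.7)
The plan is to show that the claimed count follows immediately from combining Proposition~\ref{thm:middle_odd_struct} with Proposition~\ref{thm:middle_odd_enum}. First, I would observe that by Proposition~\ref{thm:middle_odd_struct}, every connected component of $\dcm_k$ that contains a DBD-matching $M$ is a star $K_{1,\ell-1}$, whose center is $M$ and whose $\ell-1$ leaves are DBDL-matchings. Consequently, each such component contains \emph{exactly one} DBD-matching (the center) together with $\ell-1$ DBDL-matchings.

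Next, I would argue that the components containing DBDL-matchings are the same components as those containing DBD-matchings. Since Proposition~\ref{thm:middle_odd_struct}(2) shows that every DBDL-matching arising as a neighbor of some DBD-matching is a leaf (degree $1$), and its unique neighbor is a DBD-matching by construction, no DBDL-matching lies in a component that does not already contain a DBD-matching. Hence, the map sending a component to its (unique) central DBD-matching is a bijection between the set of components described in the corollary and the set of DBD-matchings of size $k$.

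Applying Proposition~\ref{thm:middle_odd_enum}, which asserts that there are $(2\ell-1)\cdot 2^{\ell-3}$ DBD-matchings of size $k$, immediately yields the claimed count. I do not expect any real obstacle here, since the structural and enumerative ingredients are already in place; the only point requiring a moment of care is the observation that distinct DBD-matchings lie in distinct components (which follows from the fact that each star has a unique vertex of degree $\ell-1 \geq 2$ at its center, whenever $\ell \geq 3$, so the center is canonically recoverable from the component).
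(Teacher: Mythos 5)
Your proposal is correct and is exactly the argument the paper intends: the corollary is stated without proof precisely because it follows immediately from Proposition~\ref{thm:middle_odd_struct} (each such component is a star whose unique center, being the only vertex of degree $\ell-1\geq 2$, is a DBD-matching) together with Proposition~\ref{thm:middle_odd_enum}. Your added remark that the center is canonically recoverable, so distinct DBD-matchings lie in distinct components, is the right point of care and matches the paper's implicit reasoning.
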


To summarize: In this section we described certain connected components of $\dcm_k$ for odd values of $k$.
The enumerational results fit those from Table~\ref{tab:odd}.
In Section~\ref{sec:big} we will show that these are precisely the medium components of $\dcm_k$ for odd $k$.


\subsection{Medium components for even $k$}\label{sec:middle_even}


Recall the definition of DB-matching from Section~\ref{sec:small_even}.
Refer again to Figure~\ref{fig:small_even_tree_new_2} for
the standard representation of a DB-matching by a strip drawing,
and for the labeling of its edges and faces.
In particular,
the standard drawing of a DB-matching of size $k-2$ has
$\ell-1$ faces $D_1, \dots, D_{\ell-1}$ (from left to right).

\medskip

\noindent\textbf{Definition.}
An \textit{EDB-matching}\footnote{\textit{EDB} stands for ``extended DB-matching''.} of size $k$
is a matching whose (standard) stripe drawing can be
obtained from that of a DB-matching of size $k-2$
by adding two boundary edges to one of the faces $D_j$ ($1 \leq j \leq \ell-1$),
one on $\zbu$ and one on $\zbl$
(or, equivalently, by replacing one of its DB-elements
by an EDB-element of the same direction and position).

\medskip

Thus, a DB-matching of size $k-2$ produces $\ell-1$ EDB-matchings of size $k$.
Specifically, let $\db(k-2, \chi, z)$ be a DB-matching.
For each $j$, $1 \leq j \leq \ell-1$,
we denote by $\edb(k, j, \chi, z)$,
the matching obtained from $\db(k-2, \chi, z)$
by adding two boundary edges, as explained above, to $D_{j}$.
These two boundary edges will be denoted by $e$ and $e'$:
$e$ lies on the same side of $\zbr$ as $b_j$
(in order to distinguish between $b_j$ and $e$ we assume that $e$ is to the left of $b_j$),
and $e'$ on the opposite side.

Equivalently, the dual tree of an EDB-matching of size $k$
can be obtained from the dual tree of a DB-matching of size $k-2$
by attaching a pair of leaves, $E$ and $E'$,
one below and one above the path $B_0 \dots D_{\ell-1}$,
to one of the vertices $D_j$, $1 \leq j \leq \ell-1$
(the edges $D_jE$ and $D_jE'$ correspond, respectively, to $e$ and $e'$).
See Figure~\ref{fig:ebd2} for an example.
\begin{figure}[h]
$$\includegraphics[width=140mm]{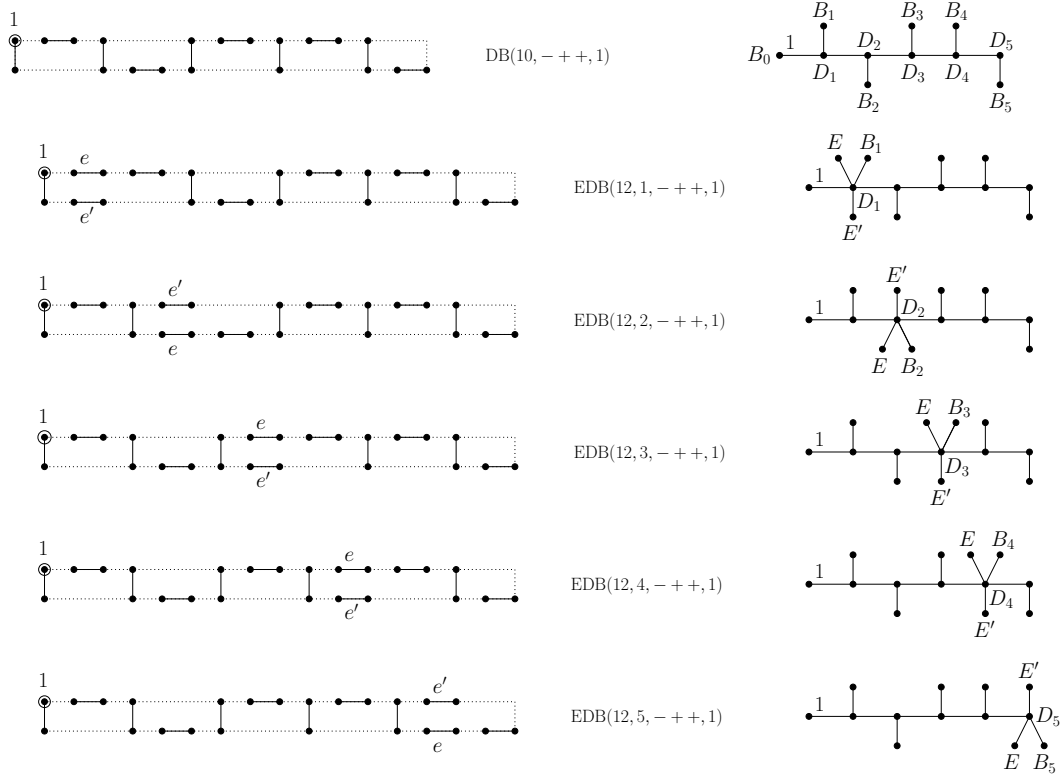}$$
\caption{The five EDB-matchings
$\edb(12, j, -++, 1)$, $j=1,2,3,4,5$,
 produced by $M=\db(10, -++, 1)$.}
\label{fig:ebd2}
\end{figure}

Recall from the proof of Theorem~\ref{thm:small_even_struct}
that the only neighbor of $\db(k-2, \chi, z)$
is $\db(k-2, \chi', z')$, where
$z'=z+(k-2)+\delta(\chi)$.

\begin{proposition}\label{thm:ebd_neighbors}
The EDB-matching $M=\edb(k, j, \chi, z)$ has $j+2$ neighbors, namely:
\begin{itemize}
\item $j$ EDB-matchings, namely, $\edb(k, i, \chi', z')$ for
$\ell-j \leq i \leq \ell-1$
(here $z'=z+k+\delta(\chi)$);
\item and two L-matchings. 
\end{itemize}
\end{proposition}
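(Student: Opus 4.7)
My plan is to enumerate the flippable partitions of $M = \edb(k, j, \chi, z)$ systematically by analyzing which edges belong to each inner face, and then to identify the resulting neighbor for each partition. First, each B-edge has a unique inner face and must lie in a flippable set there; thus $b_j$, $e$, and $e'$ are all in face $D_j$. Propagating from the rightmost face $D_{\ell-1}$ (whose only two edges $d_{\ell-1}, b_{\ell-1}$ are forced together), I would show inductively that for all $i > j$, the flippable set in $D_i$ is the standard $\{d_i, b_i\}$. Consequently $d_{j+1}$ lies in $D_{j+1}$ and not in $D_j$, so the flippable contribution of $D_j$ is either the size-3 set $\{e, b_j, e'\}$ (when $d_j \in D_{j-1}$) or the size-4 EDB-element $\{d_j, e, b_j, e'\}$ (when $d_j \in D_j$).

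Second, I would enumerate the partitions in each case. In the size-4 case, the four edges admit three flippable partitions---namely $\{4\}$, $\{d_j,e\} + \{b_j,e'\}$, and $\{e,b_j\} + \{e',d_j\}$---whose flippability can be checked from the cyclic order of endpoints, and the assignment $d_j \in D_j$ propagates leftward to force the standard pattern $\{d_i, b_i\}$ for all $i < j$. In the size-3 case, the choice $d_j \in D_{j-1}$ triggers a leftward cascade parametrized by a unique break point $i_0 \in \{1, \dots, j-1\}$: at $D_{i_0}$ one has the size-3 set $\{d_{i_0}, b_{i_0}, d_{i_0+1}\}$, at $D_i$ for $i < i_0$ the standard $\{d_i, b_i\}$, and at $D_i$ for $i_0 < i \leq j-1$ the shifted $\{b_i, d_{i+1}\}$. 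Together this gives $3 + (j-1) = j+2$ flippable partitions.

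Third, I would identify each neighbor. By Observation~\ref{thm:db_flip}, flipping the size-4 EDB-element (together with the standard DB-flips elsewhere) produces the ``upside-down'' EDB-matching, which in standard R-directed form is $\edb(k, \ell - j, \chi', z')$. Each cascade at $i_0$ exchanges the DBD-element at $D_{i_0}$ for a B$^{2+1}$-element and swaps the B$^{2+1}$ at $D_j$ for a DBD; after rotation to standard form, the result is $\edb(k, \ell - i_0, \chi', z')$. As $i_0$ ranges over $\{1, \dots, j-1\}$, together with the upside-down case this covers exactly the positions $i \in \{\ell - j, \dots, \ell - 1\}$, giving the $j$ asserted EDB neighbors. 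Finally, the two split partitions of the size-4 EDB contribution each yield a matching that I would verify to be an L-matching by exhibiting its unique block, whose removal reduces inductively to a ring (via Theorem~\ref{thm:leaves_struct}). The hardest part will be confirming that these two split-flip matchings are indeed leaves and tracking the precise parameters $\chi', z'$ through the cascade and upside-down flips.
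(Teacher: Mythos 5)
Your proposal is correct and follows essentially the same route as the paper's proof: both enumerate the flippable partitions of $\edb(k,j,\chi,z)$ according to how the edges $d_j, e, b_j, e'$ of the face $D_j$ are grouped, obtaining the whole EDB-element (one upside-down EDB neighbour), the triple $\{e, b_j, e'\}$ with a leftward cascade ($j-1$ further EDB neighbours), and the two pair-splittings (the two L-matchings). The only organizational difference is that the paper delegates your cascade analysis to the already-established argument for DBD-matchings (Proposition~\ref{thm:middle_odd_struct}), whereas you carry out the leftward propagation directly; this changes nothing of substance.
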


\begin{proof}
Consider the standard strip drawing of $M=\edb(k, j, \chi, z)$.
Let $M'$ be a (supposed) neighbor of $M$.
The set $P=\{d_j, b_j, e, e'\}$ is an R-directed EDB-element of $M$.
The part of $M$ to the right of $P$
is (if non-empty) a DB-matching consisting of R-directed DB-elements,
and, therefore, they are replaced in $M'$ by L-directed DB-elements with the same position.
The edges of $P$ can belong to the sets from a flippable partition in several ways.
There are several cases to consider.

\begin{itemize}
\item \textbf{Case 1: The quadruple $P=\{d_{j}, b_j, e, e'\}$ belongs to the flippable partition.}
$P$, the R-directed EDB-element of $M$, is replaced in $M'$ by an L-directed EDB-element with the same position.
If there are edges to the left of $P$, they form a DB-matching consisting of R-directed DB-elements.
Thus, in $M'$ they are replaced in $M'$ by L-directed elements with the same position.
Since $M$ with its flippable partition has the form
\[
\underbrace{\mathrm{[DB] \dots [DB]}}_{j\times\mathrm{DB}}  \mathrm{[DB^{2+1}]} \underbrace{\mathrm{[DB] \dots [DB]}}_{(\ell-1-j)\times\mathrm{DB}},
\]
we obtain that $M'$ has the form
\[
\underbrace{\mathrm{[BD] \dots [BD]}}_{j\times\mathrm{BD}} \mathrm{[B^{2+1}D]}  \underbrace{\mathrm{[BD] \dots [BD]}}_{(\ell-1-j)\times\mathrm{BD}},
\]
that is, $M'$ is also an EDB-matching (drawn upside down),
namely,
$M'=\edb(k, \ell-j, \chi', z')$.
See Figure~\ref{fig:ebd_neighbors_flip4_new} for an example.
\begin{figure}[h]
$$\includegraphics[width=150mm]{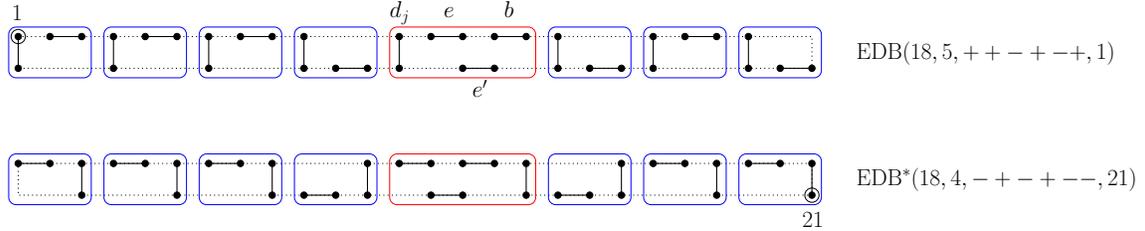}$$
\caption{$\edb(18, 5, ++-+-+, 1)$
and its neighbor $\edb(18, 4, -+-+--, 21)$
determined by flipping a quadruple
(Proposition~\ref{thm:ebd_neighbors}, case $1$).}
\label{fig:ebd_neighbors_flip4_new}
\end{figure}

\item \textbf{Case 2: The triple $\{b_j, e, e'\}$ belongs to the flippable partition.}
This triple is a $\mathrm{B^{2+1}}$-element.
Upon flipping it, we obtain in $M'$ a DBD-element with the same position.
The part of $M$ to the left of this triple, is
(if non-empty)
a DBD-matching of size $2j-1$.
Therefore, it follows from the proof of Proposition~\ref{thm:middle_odd_struct},
that $M'$ is determined by flipping another flippable DBD-element --
$\{d_i, b_i, d_{i+1}\}$ for some $ 1 \leq i \leq j-1$.
Since $M$ has the form
\[
\underbrace{\mathrm{[DB] \dots [DB]}}_{(i-1)\times\mathrm{DB}}
\mathrm{[DBD]}
\underbrace{\mathrm{[BD] \dots [BD]}}_{(j-i)\times\mathrm{BD}}
\mathrm{[B^{2+1}]}
\underbrace{\mathrm{[DB] \dots [DB]}}_{(\ell-1-j)\times\mathrm{DB}},
\]
we obtain that $M'$ has the form
\[
\underbrace{\mathrm{[BD] \dots [BD]}}_{(i-1)\times\mathrm{BD}}
\mathrm{[B^{2+1}]}
\underbrace{\mathrm{[DB] \dots [DB]}}_{(j-i)\times\mathrm{DB}}
\mathrm{[DBD]}
\underbrace{\mathrm{[BD] \dots [BD]}}_{(\ell-1-j)\times\mathrm{BD}},
\]
which can be rewritten as
\[
\underbrace{\mathrm{BD \dots BD}}_{(i-1)\times\mathrm{BD}}
\mathrm{B^{2+1}} \mathrm{D}
\underbrace{\mathrm{BD \dots BD}}_{(\ell-i)\times\mathrm{BD}},
\]
which means that $M'$ is also an EDB-matching (drawn upside down),
namely -- since the position of the flipped elements didn't change, --
$M'=\edb(k, \ell-i , \chi', z')$.

Since the flippable DBD-element can be chosen in $j-1$ ways, we obtain in this case $j-1$ neighbors of $M$.
See Figure~\ref{fig:ebd_neighbors_flip3_new} for an example
(the flipped triples are indicated by red boxes around the matchings adjacent to $M$).
\begin{figure}[h]
$$\includegraphics[width=150mm]{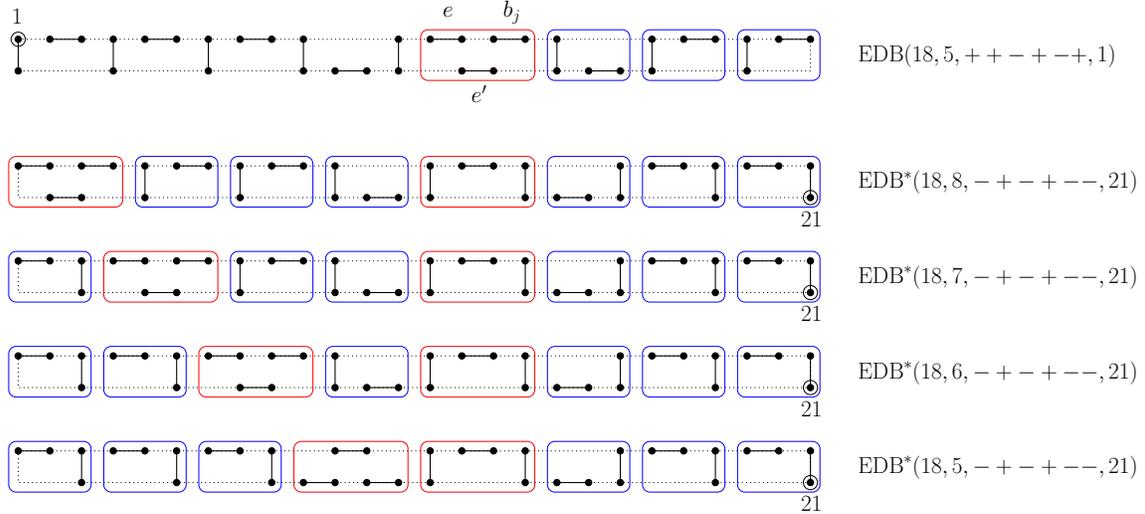}$$
\caption{$\edb(18, 5, ++-+-+, 1)$
and its neighbors $\edb(18, j, -+-+--, 21)$, $5 \leq j \leq 8$,
determined by flipping two triples
(Proposition~\ref{thm:ebd_neighbors}, case $2$).}
\label{fig:ebd_neighbors_flip3_new}
\end{figure}

\item \textbf{Case 3a: Two pairs, $\{b_j, e\}$ and $\{d_{j}, e'\}$, belong to the flippable partition.}

$M \setminus \{b_j, e\}$
is the DB-matching obtained from $\db(k-2, \chi, z)$ by changing the position of its $j$th DB-element.
Thus, the neighbor of $M \setminus \{b_j, e\}$ is
the DB-matching obtained from $\db(k-2, \chi', z')$
by changing the position of its $(\ell-j)$th DB-element.
The antiblock $\{b_j, e\}$ of $M$ is replaced in $M'$ by the block
inserted in the $(\ell-j-1)$st face of $\db(k-2, \chi', z')$
on the side corresponding to the position of its $(\ell-j)$st DB-element
(if the B-edge of the $(\ell-j-1)$st face is also on this side, then this block is
closer to $(\ell-j)$th face -- to the right in the standard drawing of $\db(k-2, \chi', z')$, but to the left in our upside down drawing).
%

We denote this $M'$ by $\edbl_1(k, j, \chi, z)$.
Since is is obtained from a DB-matching by inserting a block, it is an L-matching.
See Figure~\ref{fig:ebd_neighbors_flip2_2}(a) for an example.
It also shows the general form of corresponding dual trees.
The dotted line surrounding a leaf and a $2$-branch
indicates that these branches are on the different sides of the path.

\item \textbf{Case 3b: Two pairs, $\{b_j, e'\}$ and $\{d_{j}, e\}$, belong to the flippable partition.}
$M \setminus \{b_j, e'\}$
is the DB-matching $\db(k-2, \chi, z)$.
Its neighbor is $\db(k-2, \chi', z')$.
The flippable pair $\{b_j, e'\}$ is replaced in $M'$ by a two D-edges.
Thus, $M'$ can be obtained from $\db(k-2, \chi', z')$
by replacing its $(\ell-j)$th D-edge by three D-edges.

We denote this $M'$ by $\edbl_2(k, j, \chi, z)$.
It can be obtained by inserting a block (DD) into a DB-matching consisting of $\ell-j-1$ DB-elements (its right side),
and then inserting $j$ blocks (its left side).
Therefore it is an L-matching.
See Figure~\ref{fig:ebd_neighbors_flip2_2}(b) for an example.
\begin{figure}[h]
$$\includegraphics[width=150mm]{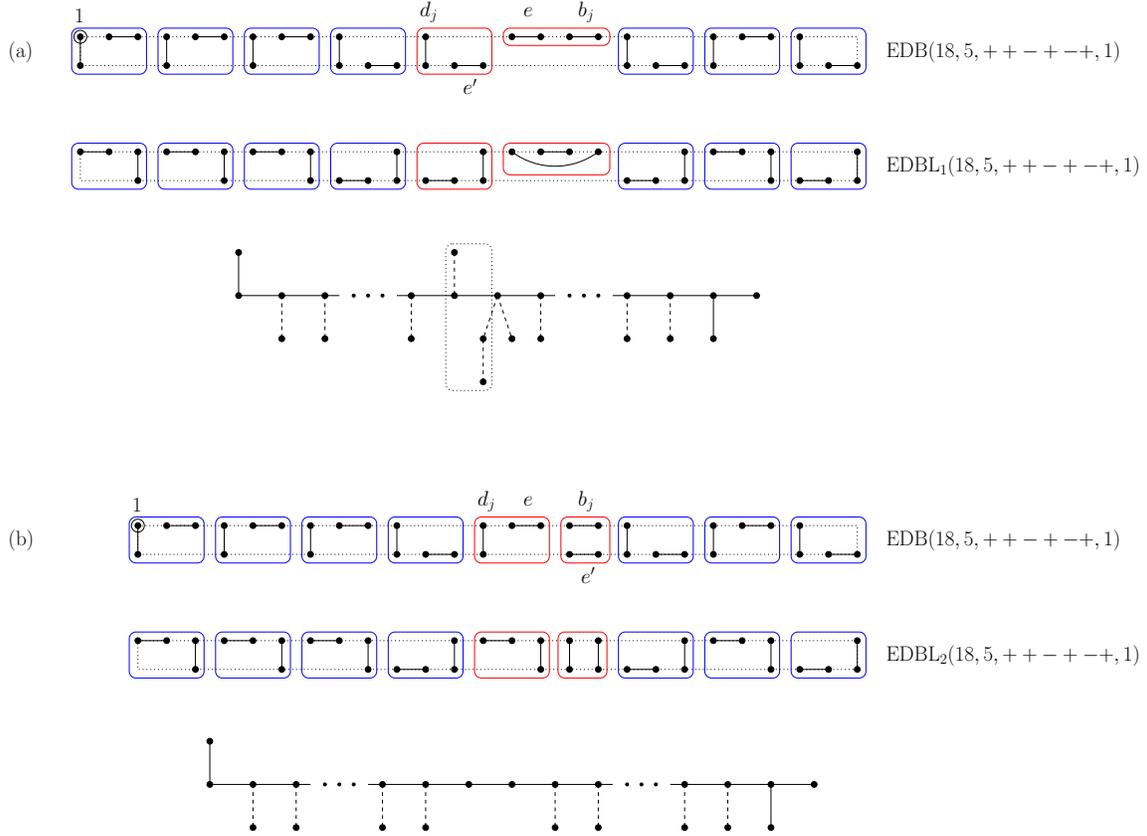}$$
\caption{$\edb(18, 5, ++-+-+, 1)$
and its neighbors
determined by flipping two pairs in $D_j$
(Proposition~\ref{thm:ebd_neighbors}, cases $3$a and $3$b).}
\label{fig:ebd_neighbors_flip2_2}
\end{figure}
\end{itemize}
\end{proof}

\noindent\textit{Remark.}
We showed that EDBL-matchings can be obtained from DB-matchings by inserting certain elements.
In some cases (listed below), when these elements are inserted close to the either of the ends,
the obtained EDBL-matchings, and, correspondingly, their dual trees, have some special elements
that do not present in the ``regular'' cases.
For $j=1$,
the dual graph of $\mathrm{EDBL}_1$ has a vertex of degree $4$ to which two $2$-branches are attached, and
the dual graph of $\mathrm{EDBL}_2$ a $4$-branch.
For $j=\ell-1$,
the dual graph of $\mathrm{EDBL}_1$ and that of $\mathrm{EDBL}_2$ have $3$-branches.
For $j=\ell-2$,
the dual graph of $\mathrm{EDBL}_1$ has a vertex of degree $4$ to which two leaves and one $4$-branch are attached.
See Figure~\ref{fig:ebd_neighbors_spec_new} for an example
and the general structure of dual trees in such cases.
\begin{figure}[h]
$$\includegraphics[width=165mm]{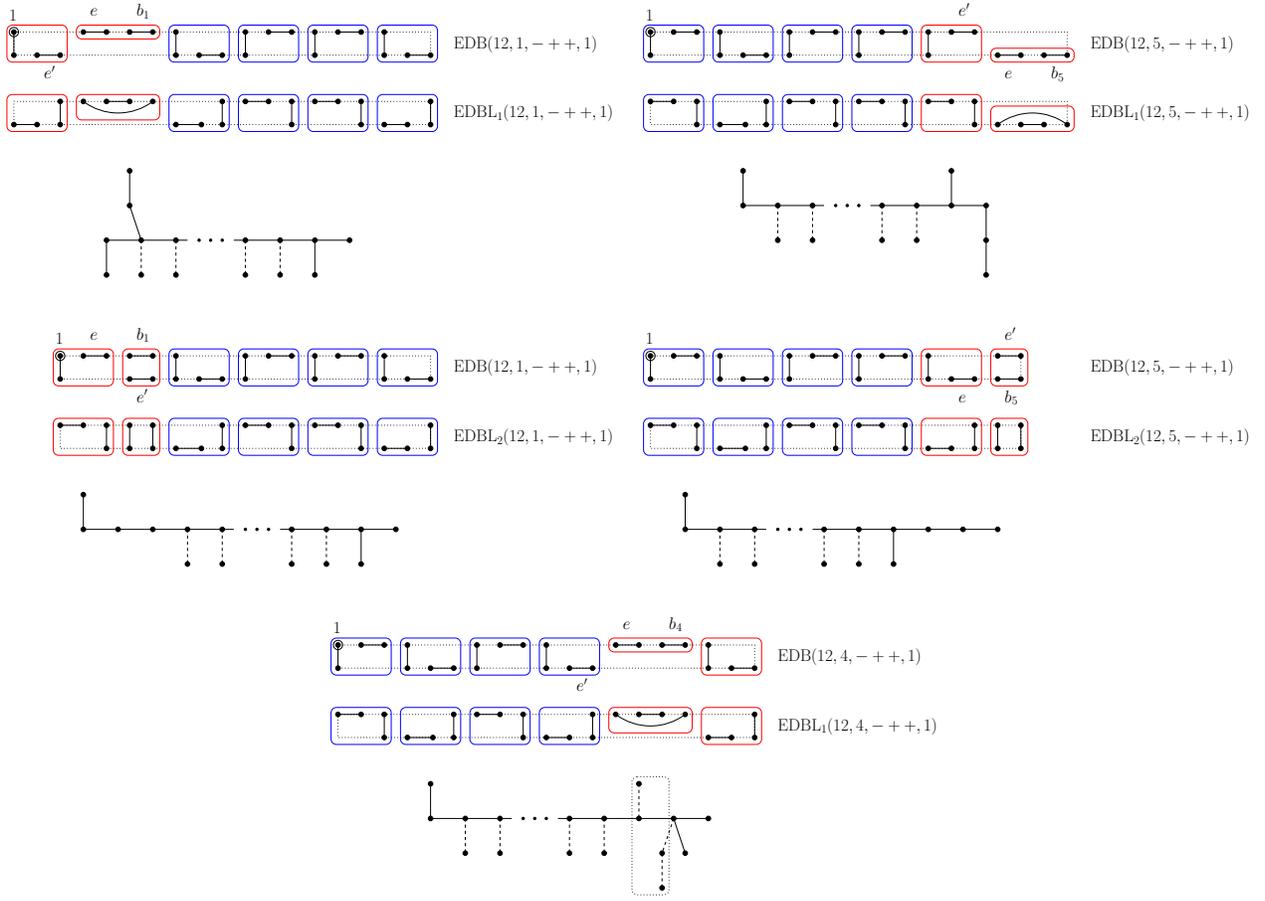}$$
\caption{EDBL-matchings with special structure (Illustration to remark to Proposition~\ref{thm:ebd_neighbors}).}
\label{fig:ebd_neighbors_spec_new}
\end{figure}

\medskip

Since the neighbors of an EDB-matching $M=\edb(k, j, \chi, z)$
are only EDB-matchings with parameters $\chi'$ and $z'$, and two L-matchings,
the structure of the connected component of $\dcm_k$ that contains $M$
follows from Proposition~\ref{thm:ebd_neighbors}.

\begin{corollary}\label{thm:middle_even_struct}
The connected component of $\dcm_k$
that contains $\edb(k, j, \chi, z)$ has the following structure:
\begin{itemize}
\item There is a path $P$ of length $k-3$:
\[\edb(k, 1, \chi, z) -
\edb(k, \ell-1, \chi', z') -
\edb(k, 2, \chi, z) -
\edb(k, \ell-2, \chi', z') - \ldots \ \ \ \ \ \ \ \ \]
\[\ \ \ \ \ \ \ \ \ldots -
\edb(k, \ell-2, \chi, z) -
\edb(k, 2, \chi', z') -
\edb(k, \ell-1, \chi, z) -
\edb(k, 1, \chi', z');
\]
\item There are additional edges between the matchings that belong to $P$, as follows:
\[\edb(k, j_1, \chi, z) -
\edb(k, j_2, \chi', z')\]
for all $j_1, j_2$ ($1 \leq \{j_1, j_2\}\leq \ell-1$) such that $j_1+j_2 \geq \ell+2$;\\
(Equivalently: if we denote the matchings from the path $P$,
according to the order in which they appear on $P$,
by $M_1, M_2, \dots, M_{k-2}$,
then these additional edges are all the edges of the form $M_a M_b$,
where $a$ is even, $b$ is odd, and $a \leq b-3$.)
\item Each member of $P$ is also adjacent to two leaves.
\end{itemize}
In particular,
all such components are isomorphic,
and their size is $3(k-2)$.
\end{corollary}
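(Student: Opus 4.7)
The plan is to derive the corollary as a direct assembly of the neighbor information already given by Proposition~\ref{thm:ebd_neighbors}, together with a closure argument that pins down exactly which matchings can appear in the connected component.

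First I would extract from Proposition~\ref{thm:ebd_neighbors} the crucial fact that every EDB-neighbor of $\edb(k,j,\chi,z)$ is of the form $\edb(k,i,\chi',z')$ with $i\in\{\ell-j,\dots,\ell-1\}$, and that the remaining two neighbors are L-matchings (leaves). Since the operations $\chi\mapsto\chi'$ and $z\mapsto z'=z+k+\delta(\chi)$ are involutions (for the second, because $\delta(\chi')=-\delta(\chi)$), the parameter pair of any EDB-matching reachable from $\edb(k,j,\chi,z)$ by a chain of EDB-neighbors belongs to the two-element set $\{(\chi,z),(\chi',z')\}$. Hence the EDB-matchings in the component are precisely the $2(\ell-1)$ matchings $\edb(k,i,\chi,z)$ and $\edb(k,i,\chi',z')$ for $1\leq i\leq\ell-1$.

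Next I would translate the adjacency condition into a clean combinatorial statement: $\edb(k,j_1,\chi,z)$ is adjacent to $\edb(k,j_2,\chi',z')$ if and only if $j_1+j_2\geq\ell$. Writing the vertices along $P$ as $M_{2i-1}=\edb(k,i,\chi,z)$ and $M_{2i}=\edb(k,\ell-i,\chi',z')$ for $1\leq i\leq\ell-1$, a direct check shows $M_{2i-1}M_{2i}$ corresponds to $i+(\ell-i)=\ell$ and $M_{2i}M_{2i+1}$ to $(\ell-i)+(i+1)=\ell+1$, so $P$ is indeed a path of length $2\ell-3=k-3$. The remaining EDB-edges are then exactly those $M_aM_b$ with $a$ even, $b$ odd, $a\leq b-3$, because in terms of the indices $(\ell-i,j)$ this is the condition $j-i\geq 2$, i.e.\ $j_1+j_2\geq\ell+2$.

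For the L-matching leaves I would invoke Cases~3a and~3b of Proposition~\ref{thm:ebd_neighbors}, which attach to each EDB-matching exactly two L-matching neighbors. These $4(\ell-1)$ leaves are pairwise distinct and distinct from all EDB-matchings in the component: each is by Theorem~\ref{thm:leaves_struct} a leaf in $\dcm_k$, so it has a unique neighbor, hence cannot be shared between two distinct EDB-matchings. Summing gives $2(\ell-1)+4(\ell-1)=6\ell-6$ vertices. The isomorphism between components follows because the combinatorial description of the component (the bipartite "staircase" adjacency on $2(\ell-1)$ EDB-vertices plus two pendant leaves at each) depends only on $\ell$, not on $j$, $\chi$, or $z$. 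The only step that requires care is ensuring no collapse occurs, i.e.\ that distinct parameter choices really do yield distinct EDB- and L-matchings; this follows from the fact that the standard strip drawing determines $(j,\chi,z)$ up to the involution already accounted for, and hence presents the main, though routine, bookkeeping obstacle in the write-up.
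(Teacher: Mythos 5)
Your proposal is correct and follows essentially the same route as the paper, which derives the corollary directly from Proposition~\ref{thm:ebd_neighbors}; you simply make explicit the bookkeeping the paper leaves implicit (the involutive nature of $\chi\mapsto\chi'$, $z\mapsto z'$, the translation of the neighbor list into the condition $j_1+j_2\geq\ell$, and the fact that the L-matchings, having degree $1$, cannot link the component to anything further). The one phrase worth tightening is ``up to the involution already accounted for'': unlike DBD-matchings, the parameterization $(j,\chi,z)\mapsto\edb(k,j,\chi,z)$ is genuinely injective, which is exactly what you need for the order count $3(k-2)$.
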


Figure~\ref{fig:middle_even_component_2} shows such a component for $k=12$.
The labels $(12, j, \chi/ \chi', z/z')$
(with ``EDB'' being omitted)
refer to the vertices of the path $P$ that appear directly above them.

\begin{figure}[h]
$$\includegraphics[width=150mm]{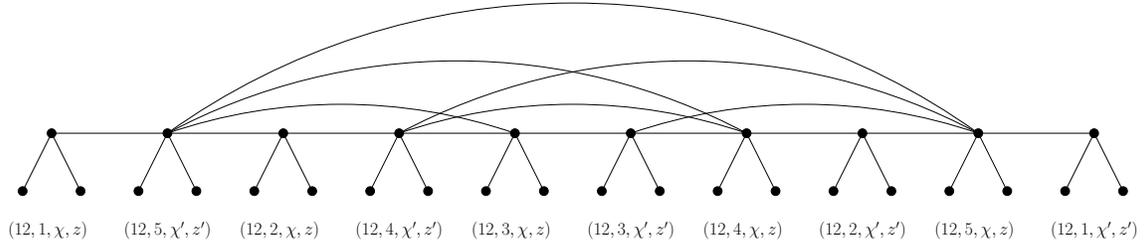}$$
\caption{The structure of the connected component of
$\dcm_{12}$ that contains an EDB-matching.}
\label{fig:middle_even_component_2}
\end{figure}

\begin{proposition}\label{thm:middle_even_enum}
The number of components of $\dcm_k$ that contain EDB-matchings is $\ell\cdot 2^{\ell - 2}$.
\end{proposition}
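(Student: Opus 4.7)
The plan is to invoke Corollary~\ref{thm:middle_even_struct} to set up a bijection between medium components and unordered pairs $\{(\chi, z), (\chi', z')\}$, and then count the latter. By that corollary, the $2(\ell-1)$ EDB-matchings in any single medium component are exactly the vertices on the path $P$, which split naturally into two bundles indexed by $(\chi, z)$ and by $(\chi', z')$; conversely, both of these parameter choices yield the same path $P$ and hence the same component.

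Counting the pairs is straightforward once one checks that the involution $\sigma \colon (\chi, z) \mapsto (\chi', z')$ on $\{-,+\}^{\ell-3} \times \{1, \dots, 2k\}$ is fixed-point free. A fixed point would force $z \equiv z' = z + k + \delta(\chi) \pmod{2k}$, i.e.\ $\delta(\chi) \equiv -k \pmod{2k}$. But $k = 2\ell$ while $|\delta(\chi)| \leq \ell - 3 < k$, so no such $\chi$ exists. Consequently every $\sigma$-orbit has size exactly $2$, and the number of orbits is $\frac{1}{2} \cdot 2^{\ell-3} \cdot 4\ell = \ell \cdot 2^{\ell-2}$, which is the claimed count.

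It remains to verify the bijection between orbits and components. Surjectivity is immediate: any medium component in question contains some EDB-matching $\edb(k, j, \chi, z)$, and by the corollary this matching lies on its path $P$, so the orbit $\{(\chi, z), (\chi', z')\}$ maps to that component. Injectivity reduces to the claim that the parameterization $(j, \chi, z)$ is unique for each labeled EDB-matching, for then the $4(\ell-1)$ EDB-matchings produced by two distinct orbits would have to all fit inside the $2(\ell-1)$ EDB-slots of a single path $P$, an impossibility. To justify this uniqueness, observe that, unlike the palindromic DBD pattern, the dual tree of an EDB-matching is asymmetric: the end of the main path $B_0 D_1 \dots D_{\ell-1}$ carrying the leaf $B_0$ has no mirror counterpart at the opposite end, and the cluster $\{B_j, E, E'\}$ of three leaves at the central vertex $D_j$ forbids any nontrivial automorphism of the embedded tree. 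Hence the standard strip drawing, and with it $(j, \chi, z)$, is uniquely recoverable from the labeled matching. The only subtle point is the boundary cases $j=1$ and $j=\ell-1$, where the degree profile at the central vertex changes; a brief case analysis confirms that the asymmetry (driven by the end carrying $B_0$ versus the end without it) persists there too.
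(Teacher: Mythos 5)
Your proof is correct and follows essentially the same route as the paper: both identify each such component, via Corollary~\ref{thm:middle_even_struct}, with an unordered pair $\{(\chi,z),(\chi',z')\}$ and then count these pairs, the paper phrasing the count as ($2^{\ell-4}$ pairs of unlabeled DB-matchings) $\times$ ($4\ell$ choices of $z$) while you count orbits of the involution $(\chi,z)\mapsto(\chi',z')$ directly. Your explicit verification that this involution is fixed-point free and that the parameterization $(j,\chi,z)$ is injective makes precise what the paper leaves implicit, but it is the same argument.
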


\begin{proof}
By Proposition~\ref{thm:bd_enum}, the number of DB-matchings of size $k-2$ is $(\ell-1) \cdot 2^{\ell-1}$.
Therefore, there are $2^{\ell-4}$ pairs of unlabeled DB-matchings of size $k-2$.
Each such pair produces one connected component that contains unlabeled EDB-matchings of size $k$.
$z$ can be chosen in $2k = 4\ell$ ways.
Thus, the number of such components is $ \ell\cdot 2^{\ell - 2}$.
\end{proof}

To summarize: In this section we described certain connected components of $\dcm_k$ for even values of $k$.
The enumerational results fit those from Table~\ref{tab:even}.
In Section~\ref{sec:big} we will show that these are precisely the medium components of $\dcm_k$ for even $k$.

\section{Big components}\label{sec:big}
\subsection{The survey of the proof}\label{sec:big_plan}

In Section~\ref{sec:small} we defined I- and DB-matchings and proved that they are precisely those matchings that form small components.
In Section~\ref{sec:middle} we defined DBD-, DBDL-, EDB- or EDBL-matchings and described their connected components.
In order to complete the proof, we need to show that all other matchings form one (``big'') connected component.
We start with some definitions.

\medskip

\noindent\textbf{Definitions.}\mbox{}
\begin{enumerate}
\item The \textit{ring component} of $\dcm_k$ is the connected component that contains the rings.
\item A \textit{special} matching is either an $\mathrm{I}$-,  $\mathrm{DB}$-, $\mathrm{DBD}$-, $\mathrm{DBDL}$-, $\mathrm{EDB}$- or $\mathrm{EDBL}$-matching.
\item A \textit{regular} matching is a matching which is not special.
\end{enumerate}

\medskip

Observe that for $k \geq 5$ the rings are regular matchings.

Theorem~\ref{thm:main_theorem} follows from
the results obtained above and the following theorem.

\begin{theorem}\label{thm:ring_component}
For $k \geq 9$, every regular matching $M$ belongs to the ring component.
\end{theorem}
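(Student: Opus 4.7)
The plan is to prove Theorem~\ref{thm:ring_component} by strong induction on $k$, starting from a small range of base cases (for example $9 \le k \le 12$) that are handled by direct verification, possibly with computer assistance. The inductive engine rests on the following reduction: for every regular matching $M$ of size $k$ one can find a separated pair $K \subset M$ such that the smaller matching $M - K$ is also regular. Given this, the inductive hypothesis puts $M - K$ in the ring component of $\dcm_{k-2}$, and Proposition~\ref{thm:block_anti_connected} lifts a path from $M - K$ to a ring $R$ of size $k-2$ into a path in $\dcm_k$ from $M$ to $R + K^{\ast}$, where $K^{\ast}$ equals $K$ or its flip depending on the parity of the original path length.

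To close the argument one must then show that for every ring $R$ of size $k-2$ and every separated pair $K^{\ast}$ inserted at any position, $R + K^{\ast}$ lies in the ring component of $\dcm_k$. When $K^{\ast}$ is an antiblock inserted into a skip of $R$, $R + K^{\ast}$ is literally one of the two rings of size $k$, and there is nothing to prove. In the remaining cases (a block inserted anywhere, or an antiblock inserted into an edge of $R$), the matching $R + K^{\ast}$ has a single diagonal chord sitting inside an otherwise boundary-only structure; an explicit short chain of flips, easily described in the strip-drawing notation of Section~\ref{sec:strip}, moves this chord out of the picture and reaches a ring of size $k$. I would package this as a separate auxiliary lemma; its proof is straightforward but tedious.

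The main obstacle is the structural step: producing a separated pair $K$ with $M - K$ regular. Proposition~\ref{thm:exist_separated} guarantees at least two disjoint candidates, and the plan is to argue by contradiction: if \emph{every} separated pair of a regular $M$ left a special residue, then the dual tree $D(M)$ would be forced into one of the rigid spine-plus-decoration shapes characterising the special families. Since each special family has a very particular dual tree --- a long path with leaves, $2$-branches, or V-shapes attached in prescribed patterns, as tabulated in Sections~\ref{sec:small} and~\ref{sec:middle} --- demanding that this shape persist after removing any $2$-branch or V-shape attached at essentially any vertex pins $D(M)$ down to a short list of possibilities. One then checks that every possibility in this list is itself special, contradicting regularity of $M$.

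The threshold $k \ge 9$ is exactly what makes the spines long enough for these rigidity arguments to be unambiguous; for smaller $k$ the dual trees are too short to distinguish the special families from one another or from generic ones, which is precisely why the base cases must be handled separately. I expect the bulk of the actual write-up to be a finite but careful bookkeeping of the dual-tree cases in the structural step, together with the explicit flip sequences needed in the auxiliary lemma on $R + K^{\ast}$.
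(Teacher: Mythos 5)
Your overall strategy (induct by removing a separated pair, then finish by analyzing a ring of size $k-2$ with a separated pair attached) is the paper's strategy, but two of the steps you dismiss as routine are exactly where the real work lies, and as sketched they do not go through.

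The more serious gap is the auxiliary lemma on $R+K^{\ast}$. When $K^{\ast}$ is an antiblock inserted between two \emph{connected} points of the ring $R$, there is no ``explicit short chain of flips'' that removes the chord: every flippable set consisting of consecutive boundary edges closes up with a new diagonal, so natural flip sequences keep regenerating a single chord forever. (For $k=5$ this configuration is in fact a DBD-matching lying in a \emph{different} component, which shows the configuration is genuinely delicate rather than trivially ring-like.) The paper's resolution is a parity argument: keep $K^{\ast}$ oscillating while walking the $R$-part along an \emph{even}-length path from $R$ to the other ring $R'$, so that $K^{\ast}$ ends as an antiblock sitting in a skip of $R'$. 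Since $R$ and $R'$ are adjacent, such an even path exists only because the ring component is \emph{not bipartite} for $k\ge 8$ (Proposition~\ref{thm:non_bip}); this is a substantive ingredient, proved by explicit odd closed walks plus induction, and it is false for $k\le 7$. Your proposal omits this ingredient entirely, and without it the case ``antiblock inside an edge of $R$'' is unproved.

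The second gap is your structural step. You assert that \emph{every} regular $M$ has a separated pair $K$ with $M-K$ regular, and propose to prove it by contradiction over all separated pairs. The paper proves only the weaker Proposition~\ref{thm:big_proof}: either such a decomposition exists, \emph{or} $M$ is connected by a path to a matching having one; and the ``or'' branch is genuinely used (when the special residue is a DBDL-, EDB- or EDBL-matching, the argument first moves to an adjacent matching using the known structure of the medium components, rather than exhibiting a good pair inside $M$ itself). Your stronger claim may or may not hold, but your sketch is not a proof of it, and the case analysis it requires is at least as heavy as the paper's (one must rule out all special shapes of $M-P$ for a well-chosen $P$, for each way $D(K)$ can attach to each special dual tree). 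If the stronger claim fails, your induction as set up breaks and must be repaired by the weaker, path-connected version — which in turn requires the component-structure results of Section~\ref{sec:middle} as input.
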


\begin{proof}
For $k=9$ and $10$, the statement was verified by a computer program.
For $k \geq 11$, the proof is by induction.

By Proposition~\ref{thm:exist_separated}, $M$ has at least one separated pair $K$.
Let $L = M - K$.
Now we have two cases depending on whether $L$ is special or regular.

\smallskip

\noindent\textbf{Case \boldmath 1: $L$ is regular.}
By induction, $L$ belongs to the ring component in $\dcm_{k-2}$.
We perform the sequence of operations that converts $L$ into a ring, while $K$ oscillates
(that is, on the points of $K$, on each step a block is replaced by an antiblock, or vice versa).
In this way we obtain a matching of the form $R+K'$ where $R$ is a ring of size $k-2$ and $K'$ is a separated pair.
We can also assume that $K'$ is an antiblock
(otherwise, if $K'$ is a block, we flip $K'$ and $R$:
$K'$ is then replaced by an antiblock, and $R$ by the second ring).
If the antiblock $K'$ is inserted in a skip of $R$,
then the whole obtained matching is a ring of size $k$, and we are done.
Otherwise, the antiblock $K'$ is inserted between two connected points of $R$.
In such a case we use the following proposition that will be proven in
Section~\ref{sec:non_bip}.

\begin{proposition}\label{thm:non_bip}
For $k \geq 8$, the ring component of $\dcm_k$ is not bipartite.
\end{proposition}

Thus, it is possible to convert the ring $R$
into the second ring by an \textit{even} number of operations.
We perform these operations, while $K'$ oscillates.
After this sequence of operations, we still have the antiblock $K'$,
but the ring $R$ is replaced by the second ring $R'$, and now the whole matching is a ring of size $k$.
Figure~\ref{fig:ring_antiblock_n_1} illustrates the last step for odd $k$.
\begin{figure}[h]
$$\includegraphics[width=70mm]{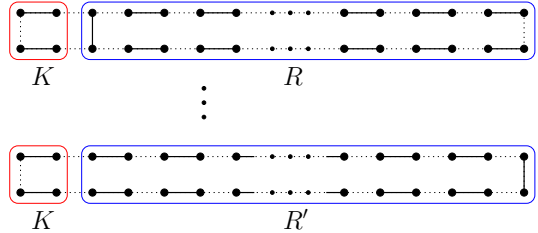}$$
\caption{Illustration to the proof of Theorem~\ref{thm:ring_component} when $L$ is regular.}
\label{fig:ring_antiblock_n_1}
\end{figure}

This completes the proof of Case 1.

\smallskip

\noindent\textbf{Case \boldmath 2: $L$ is special.}
In this case we use the following proposition that will be proven in Section~\ref{sec:big_proof}.

\begin{restatable}{proposition}{bigproof}\label{thm:big_proof}
Let $M$ be a regular matching of size $k$ ($k \geq 10$)
that has a decomposition $M=L+K$ where $K$ is a separated pair and $L$ is a special matching.
Then $M$ has another decomposition $N+P$,
where $P$ is a separated pair and $N$ is a \textbf{regular} matching,
or $M$ is connected (by a path) to a matching that has such a decomposition.
\end{restatable}

Thus, $M$ has a decomposition as in Case~1,
or it is connected by a path to a matching that has such a decomposition.
In both cases it means that $M$ belongs to the ring component.
This completes the proof.
\end{proof}

It remains to prove Propositions~\ref{thm:non_bip}
and~\ref{thm:big_proof}.

\subsection{The ring component is not bipartite for $k \geq 8$
(proof of Proposition~\ref{thm:non_bip}).}\label{sec:non_bip}

We prove Proposition~\ref{thm:non_bip} by constructing a path of odd length from a ring to itself.
In figures, we mark the matchings alternatingly by white and black squares,
starting with a ring marked by white.
We finish when we obtain the same ring marked by black.

\smallskip

First we prove the proposition for even values of $k$.
For $k=8$, it is verified directly, see Figure~\ref{fig:bip_8_n}
(in this and the following figures, we use ``vertical'' strip drawings in order to save the space).
\begin{figure}[h]
$$\includegraphics[width=120mm]{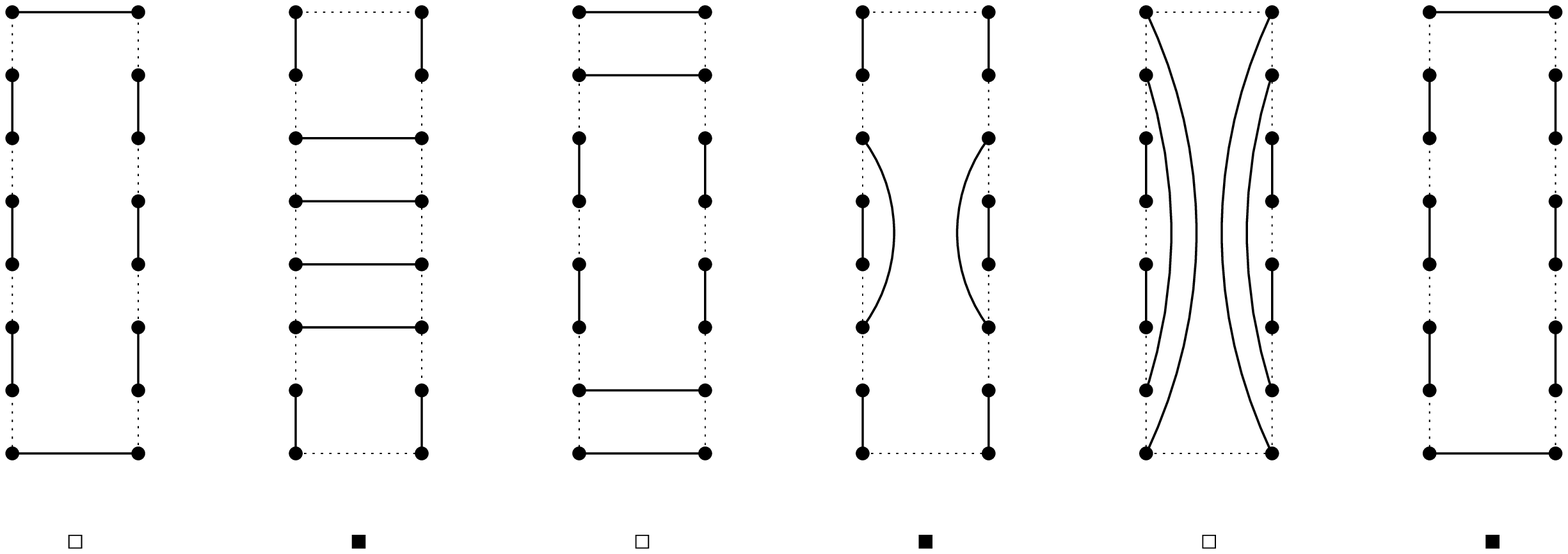}$$
\caption{Proof of Proposition~\ref{thm:non_bip} for $k=8$.}
\label{fig:bip_8_n}
\end{figure}

\begin{figure}[h]
$$\includegraphics[width=110mm]{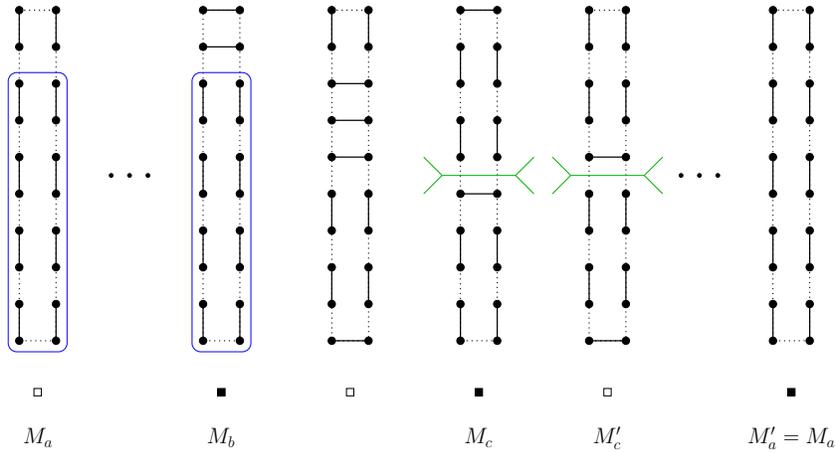}$$
\caption{Proof of Proposition~\ref{thm:non_bip} for $k=10$.}
\label{fig:bip_10_n}
\end{figure}

For $k=10$
refer to Figure~\ref{fig:bip_10_n}.
We start with  a ring $M_a$ represented by a strip drawing.
$M_b$ is obtained from $M_a$ by applying the operations as in Figure~\ref{fig:bip_8_n}
on the flippable set of size $8$ marked by a blue box.
Since the number of these operations is odd, the block outside this flippable set is replaced by an antiblock.
After the next two steps we reach a drawing $M_c$.
For each drawing $M_i$ on the path from $M_a$ to $M_c$, denote by $M'_i$ the reflection of $M_i$ with respect to the green line (which halves the points).
Notice that $M'_c$ is adjacent to $M_c$.
Therefore, we can obtain the path $M_a \dots M_b  M_c  M'_c  M'_b \dots M'_a $.
This path has odd length, and $M'_a=M_a$.
Thus, we have found a path of odd length from a ring to itself.

For even $k \geq 12$ we prove the statement by induction, assuming it holds for $k-4$ and for $k-2$.
Refer to Figure~\ref{fig:bip_even_n}.
We start from a ring $M_a$.
$M_b$ is obtained from $M_a$ by applying the odd number of operations
which transfer the ring of size $k-2$ to itself,
on the flippable set marked by blue.
$M'_b$ is obtained from $M_b$ by applying the odd number of operations
which transfer the ring of size $k-4$ to itself,
on the flippable set marked by red boxes.
Notice that $M'_b$ is the reflection of $M_b$ with respect to the green line (which halves the points).
Therefore, we can obtain the path $M_a \dots M_b  \dots M'_b \dots M'_a $, where
$M'_i$ is the reflection of $M_i$ with respect to the green line.
This path has odd length, and $M'_a=M_a$.
Thus, we have found a path of odd length from $M_a$ to itself.
\begin{figure}[h]
$$\includegraphics[width=80mm]{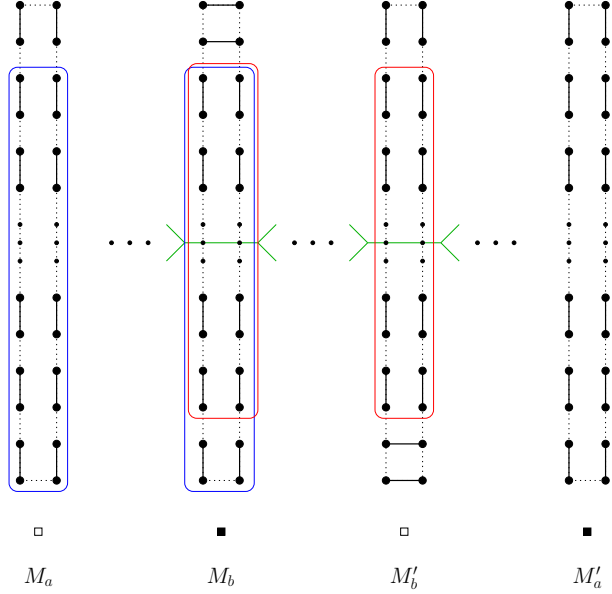}$$
\caption{Proof of Proposition~\ref{thm:non_bip} for even $k\geq 12$.}
\label{fig:bip_even_n}
\end{figure}

\smallskip

Now we prove the proposition for odd values of $k$.
For $k=9$, it is verified directly.
Refer to Figure~\ref{fig:bip_9_n}.
We start from a ring $M_a$,
and after four steps we reach a matching $M_c$
which is symmetric with respect to the green line.
Therefore we can construct a path of even size
 $M_a \dots M_b  M_c M'_b \dots M'_a $, where
 $M'_i$ the reflection of $M_i$ with respect to the green line.
 $M'_a$ is the second ring, which is disjoint compatible to $M_a$,
 and, thus we have a path of odd length from $M_a$ to itself.
\begin{figure}[h]
$$\includegraphics[width=120mm]{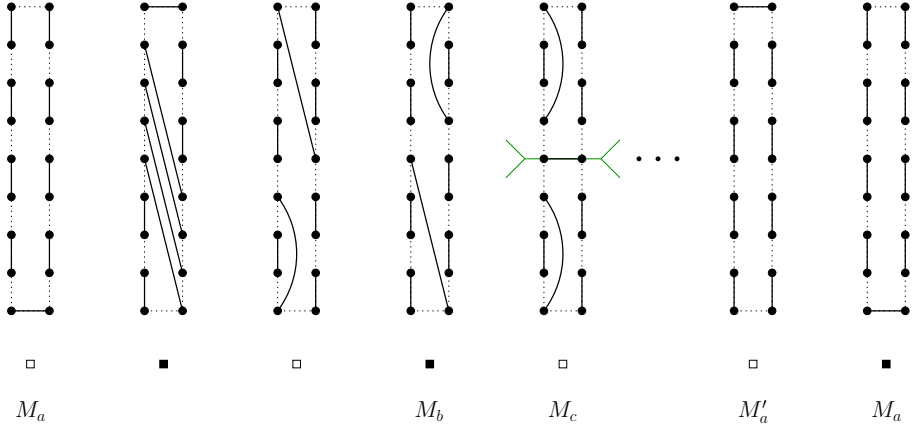}$$
\caption{Proof of Proposition~\ref{thm:non_bip} for $k=9$.}
\label{fig:bip_9_n}
\end{figure}

For odd $k\geq 11$,
we prove the statement using the even case proven above.
Refer to Figure~\ref{fig:bip_odd_n}.
We start from a ring $M_a$.
$M_b$ is obtained from $M_a$ by applying an odd number of operations
on the flippable set of size $k-3$ marked by blue,
while the remaining flippable triple oscillates.
After two more steps we reach a matching $M_d$,
which is symmetric with respect to the green line.
Therefore we can construct a path of even size
 $M_a \dots M_b  M_c M_d M'_c M'_b \dots M'_a $, where
 $M'_i$ is the reflection of $M_i$ with respect to the green line.
 $M'_a$ is the second ring which is disjoint compatible to $M_a$.
 Thus we have a path of odd length from $M_a$ to itself.
 \hfill\qed
\begin{figure}[h]
$$\includegraphics[width=110mm]{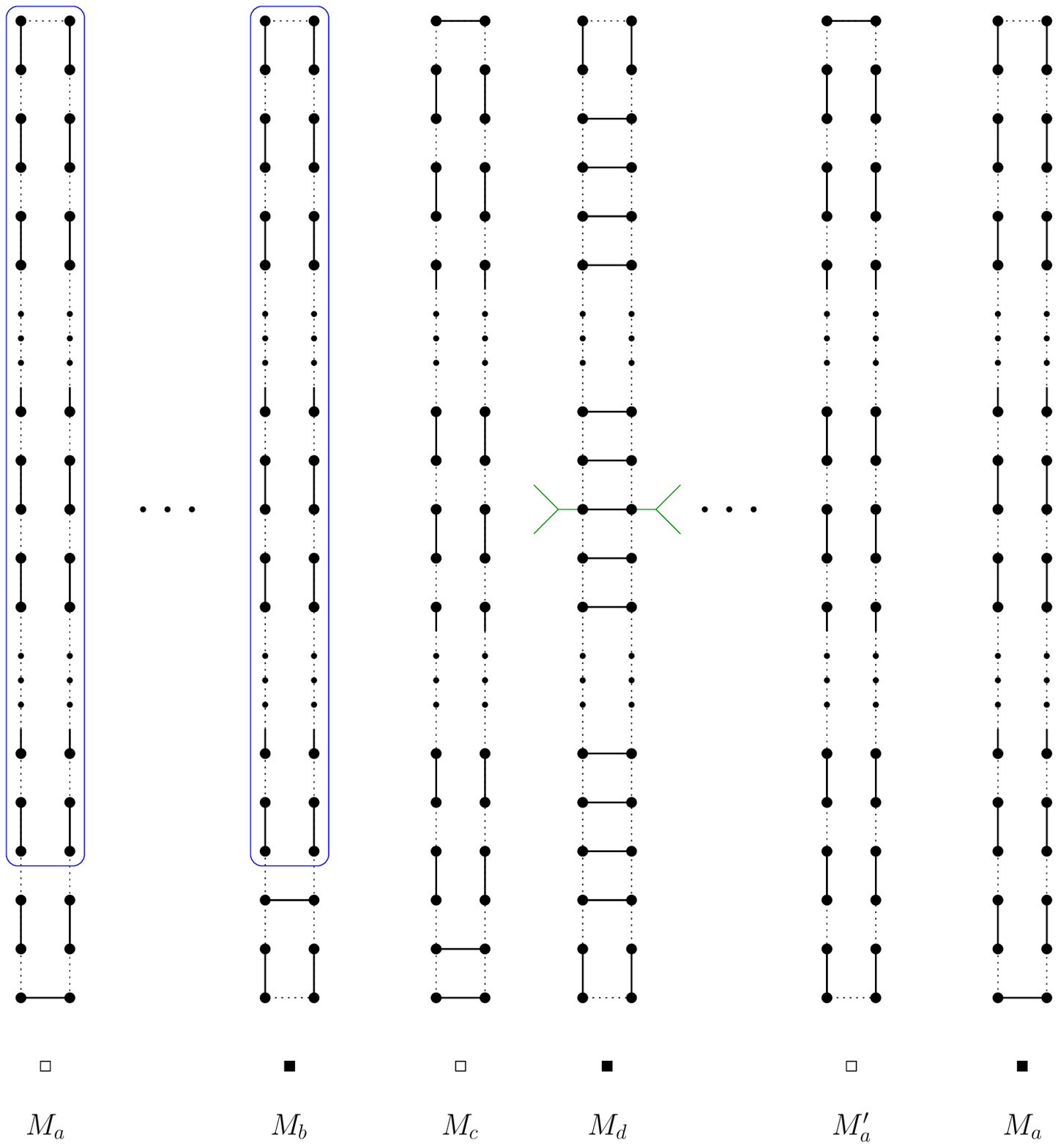}$$
\caption{Proof of Proposition~\ref{thm:non_bip} for odd $k\geq 11$.}
\label{fig:bip_odd_n}
\end{figure}

\medskip

\noindent\textit{Remark.} We have verified by direct inspection and a computer program
that for $2 \leq k \leq 7$, the ring component of $\dcm_k$ \textit{is} bipartite.

\subsection{Proof of Proposition~\ref{thm:big_proof}}\label{sec:big_proof}

We restate the claim to be proven in this section.

\bigproof*

\noindent\textit{Overview of the proof.}
In the proof to be presented,
the possible structure of $L$ plays the central role,
and we need to refer to the definitions
and standard notation of some kinds of special matchings.
Therefore we replace $k$ by $k-2$,
and assume from now on that
$L$ is a matching of size $k$ and
$M$ is a matching of size $k+2$,
where $k \geq 8$.

Since the special matchings have different structure for odd and even values of $k$, the proofs for these cases are separate.
It is more convenient to follow the proofs if we use dual graphs.
In order to simplify the exposition, the elements of the dual graphs
that correspond to blocks and antiblocks --
$2$-branches and V-shapes --
will be occasionally referred to just as blocks and antiblocks.

The idea of the proof is similar to that of the $[\Rightarrow]$-part
in the proof of Theorem~\ref{thm:small_even_struct}.
It is given that $L$ is a special matching.
For some kinds of special matchings we shall proceed as follows.
Depending on the point where $K$ is inserted into $L$
(or, in terms of dual trees, $D(K)$ is attached to $D(L)$),
we shall choose $P$ and show that for this choice
the matching $N=M-P$ does not fit any of the structures
of special matchings (of appropriate parity).
Therefore, $N$ must be regular, and,
thus $M$ \textit{has} a desired decomposition.
For other kinds of special matchings we shall use the structure
of components that contain special matchings
in order to show that $M$ \textit{is connected} 
(by a path)
to a matching that has a
desired decomposition.

\subsubsection{Proof of Proposition~\ref{thm:big_proof} for odd $k$.}\label{sec:big_proof_odd}

First, we recall all possible structures of dual trees of DBD- and DBDL-matchings,
and the standard notation for DBD-matchings.
The dual trees of $\mathrm{DBDL}$-matchings have two possible structures
referred to as $\mathrm{DBDL1}$ and $\mathrm{DBDL2}$,
see Figure~\ref{fig:middle_odd_all_trees_1}.
Moreover, we recall that I-matchings
never have antiblocks (Proposition~\ref{thm:isolated_no_antiblock}),
and that for $k \geq 5$ they have at least two disjoint blocks
(Proposition~\ref{thm:small_odd_two_blocks}).

\begin{figure}[h]
$$\includegraphics[width=90mm]{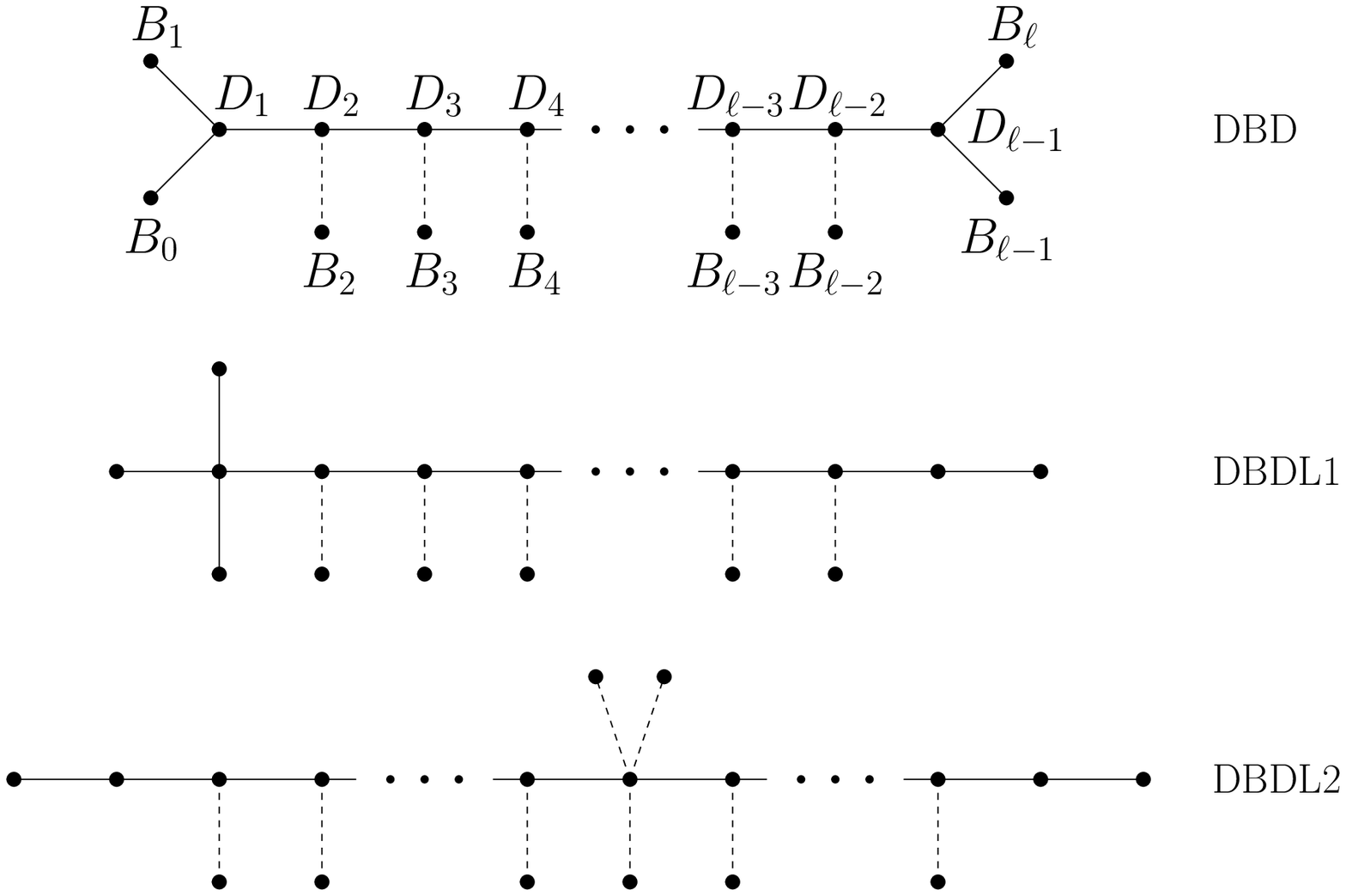}$$
\caption{Dual trees of odd size special matchings from medium components.}
\label{fig:middle_odd_all_trees_1}
\end{figure}

\noindent\textbf{\boldmath Case 1. $L$ is a DBD-matching, $K$ is a block.}
Refer to the first graph in Figure~\ref{fig:middle_odd_all_trees_1} as to the dual tree of $L$.
Due to the symmetry of DBD-matchings, we can assume that $D(K)$ is attached to $D(L)$
in the point $B_i$ or $D_i$ where $i \leq \left\lceil\frac{\ell-1}{2}\right\rceil$.
Let $P$ be the antiblock $B_{\ell}D_{\ell-1}B_{\ell-1}$, and let $N=M-P$.
Then $N$ is a regular matching.
Indeed, $N$ has an antiblock ($D_{\ell-1}D_{\ell-2}B_{\ell-2}$), and thus it cannot be an I-matching.
If $D(K)$ is attached in $B_i$, then $D(N)$ has a $3$-branch, which never happens for DBD- and DBDL-matchings.
If $D(K)$ is attached in $D_i$, then $D(N)$ has a vertex of degree $4$
to which at most two leaves are attached, which never happens for DBD- and DBDL-matchings.

\smallskip

\noindent\textbf{\boldmath Case 2. $L$ is a DBD-matching, $K$ is an antiblock.}
Again we assume that $D(K)$ is attached to $D(L)$
in the point $B_i$ or $D_i$ where $i \leq \left\lceil\frac{\ell-1}{2}\right\rceil$.
Denote by $P$ the antiblock $B_{\ell}D_{\ell-1}B_{\ell-1}$, and let $N=M-P$.
Then $N$ is a regular matching. Indeed, $N$ cannot be an I-matching because it has at least one antiblock.
If $D(K)$ is attached in $B_0$ or $B_1$, then $M$ is special (DBD), while it is assumed to be regular.
If $D(K)$ is attached in $B_i$, $i \geq 2$, then $D(N)$ has three disjoint antiblocks
($K$, $B_0D_1B_1$ and $D_{\ell-1}D_{\ell-2}B_{\ell-2}$),
which never happens for DBD- and DBDL-matchings.
If $D(K)$ is attached in $D_i$, then $D(N)$ has a vertex of degree $5$ and has no blocks, which never happens for DBD- and DBDL-matchings.

\smallskip

\noindent\textbf{\boldmath Case 3. $L$ is a DBDL-matching, $K$ is a separated pair.}
Such a matching $M$ is adjacent to a matching
$M'=L'+K'$
where $L'$ is the DBD-matching adjacent to $L$,
and $K'$ is the flip of $K$.
For $M'$ the statement holds by Cases~1 and~2.
Therefore, it also holds for $M$.

\smallskip

\noindent\textbf{\boldmath Case 4. $L$ is an I-matching, $K$ is a block.}
In such a case $M$ is also an I-matching (by Theorem~\ref{thm:small_even_struct}), and, thus, it cannot be regular.
So, this case is impossible.

\smallskip

\noindent\textbf{\boldmath Case 5. $L$ is an I-matching, $K$ is an antiblock.}
$L$ has at least two disjoint blocks.
Therefore, $M$ has at least one block $K'$.
Clearly, $K'$ is disjoint from $K$.
Denote $L' = M - K'$.
$L'$ cannot be an I-matching because it has an antiblock ($K$).
If $L'$ is a DBD- or a DBDL-matching, we return to Case~1 or~3
(with $L'$ and $K'$ in the role of $L$ and $K$).
If $L'$ is regular, we are done. \hfill\qed

\subsubsection{Proof of Proposition~\ref{thm:big_proof} for even $k$.}\label{sec:big_proof_even}

We recall all possible structures of the dual trees of DB-, EDB- and EDBL-matchings.
As we saw in Section~\ref{sec:middle_even},
the dual tree of any EDB-matching has one of three possible structures,
and
the dual tree of any EDBL-matching has one of six possible structures;
this structures will be referred to as in Figure~\ref{fig:special_even_all_trees_2}
($\mathrm{EDB1}$, $\mathrm{EDB2}$, etc.).
For dual trees of $\mathrm{DB}$-matchings and of $\mathrm{EDB1}$-matchings
(that is, the $\mathrm{EDB}$-matchings in which the edges $e$ and $e'$
belong to the face $D_j$ where $2 \leq j \leq \ell-2$),
we also recall the standard notation of vertices.

\begin{figure}[h]
$$\includegraphics[width=165mm]{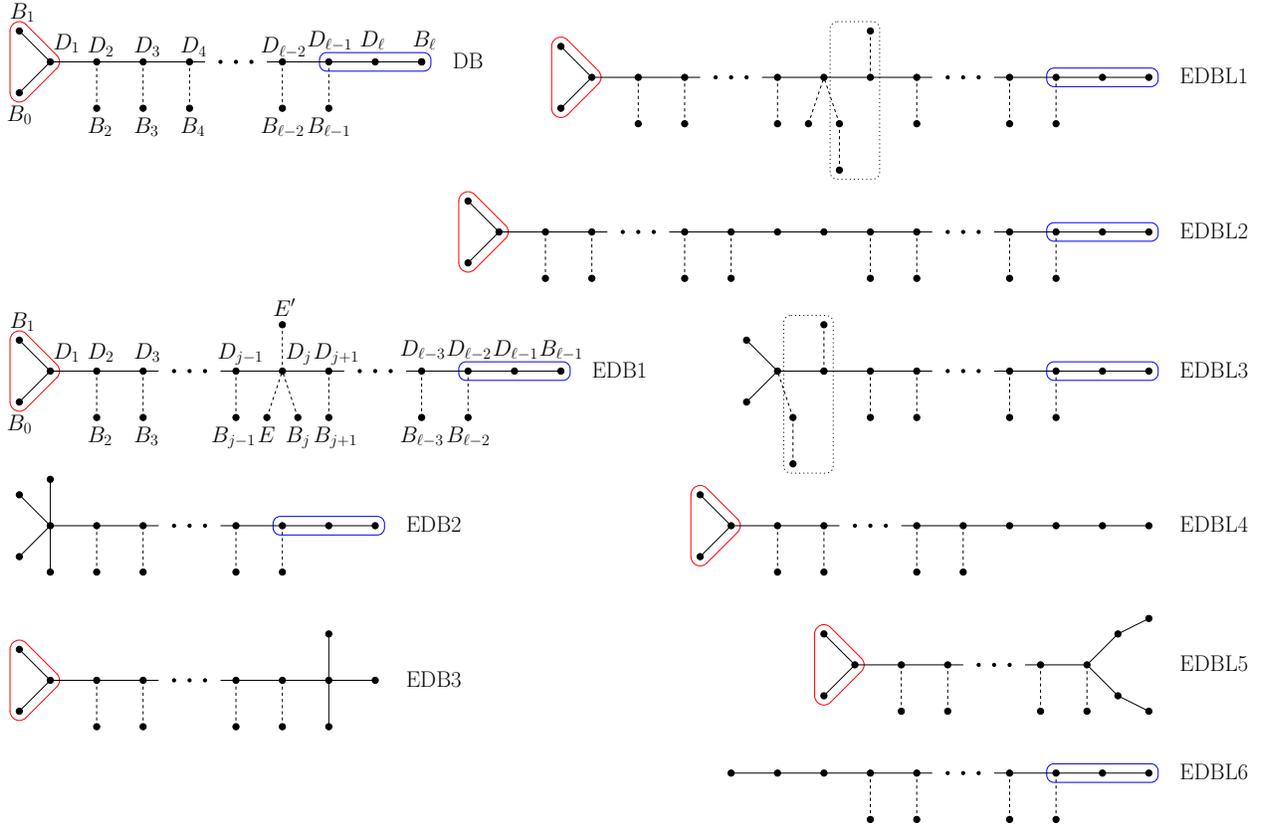}$$
\caption{Dual trees of small and medium special matchings.}
\label{fig:special_even_all_trees_2}
\end{figure}

\noindent\textbf{\boldmath Case 1. $L$ is a $\mathrm{DB}$-matching.}
Refer to the labeling of $D(L)$ as in Figure~\ref{fig:special_even_all_trees_2}.
$D(K)$ is attached to $D(L)$ in some point $B_i$ or $D_i$.
If $i \geq \left \lceil \frac{\ell}{2} \right \rceil $,
let $P$ be the antiblock $B_0 D_1 B_1$.
If $i < \left \lceil \frac{\ell}{2} \right \rceil $,
let $P$ be the block $D_{\ell-1} D_{\ell} B_{\ell}$.
Denote $N=M-P$. We claim that $N$ is regular.

In the case $i \geq \left \lceil \frac{\ell}{2} \right \rceil $,
in the left side of $D(N)$ we have an antiblock $B_{2}D_{2}D_{1}$,
and $D_2$ has degree $3$.
Therefore, if $N$ is special, it can be only a antiblock $Q$ that appears
in the left side of
$\mathrm{DB}$,
$\mathrm{EDB1}$,
$\mathrm{EDB3}$,
$\mathrm{EDBL1}$,
$\mathrm{EDBL2}$,
$\mathrm{EDBL4}$
or
$\mathrm{EDBL5}$
(it is marked by a red frame in Figure~\ref{fig:special_even_all_trees_2}).
However, in such a case, upon restoring $D(P)$
(attaching it to one of the leaves of $Q$)
we obtain a matching that fits the same structure, and therefore, is also special.
This is a contradiction since $M=N+P$ is a regular matching.

In the case $i < \left \lceil \frac{\ell}{2} \right \rceil $
the reasoning is similar:
in the right side of $D(N)$ we have a block $D_{\ell-2} D_{\ell-1} B_{\ell-1}$.
and $D_{\ell-2}$ has degree $3$.
Therefore, if $N$ is special, it can be only a block $R$ that appears
in the right side of
$\mathrm{DB}$,
$\mathrm{EDB1}$,
$\mathrm{EDB2}$,
$\mathrm{EDBL1}$,
$\mathrm{EDBL2}$,
$\mathrm{EDBL3}$
or
$\mathrm{EDBL6}$
(it is marked by a blue frame in Figure~\ref{fig:special_even_all_trees_2}).
Upon restoring $D(P)$
(attaching it to the central point of $R$)
we obtain a matching that fits the same structure, and therefore, is also special.
This is a contradiction as above.

\smallskip

\noindent\textbf{\boldmath Case 2.
$L$ is an $\mathrm{EDB1}$-matching with $j= \left \lceil \frac{\ell-1}{2} \right \rceil $.}
Refer to the labeling of $D(L)$ as in Figure~\ref{fig:special_even_all_trees_2}.
The proof is similar to that of Case~1.
If $D(K)$ is attached to $D(L)$
``in the right part'' -- that is,
in $B_i$ or $D_i$ with $i \geq j$, or in one of the points $E, E'$, --
we take $P$ to be the leftmost antiblock.
If $D(K)$ is attached to $D(L)$
``in the left part'' -- that is,
in $B_i$ or $D_i$ with $i < j$, --
we take $P$ to be the rightmost block.
We assume (for contradiction) that $N=M-P$ is special.
However, depending on the case,
$D(N)$ has an antiblock or a block with a vertex of degree $3$.
Therefore it can fit a special matching in a specific way.
Upon restoring $P$, we see that $D(M)$ fits the same structure as $D(N)$,
and, therefore, $M$ is special -- a contradiction.

\smallskip

\noindent\textbf{\boldmath Case~3.
$L$ is an $\mathrm{EDB}$-matching not of the kind treated in Case~2,
or an $\mathrm{EDBL}$-matching.}
By Corollary~\ref{thm:middle_even_struct},
$L$ is connected by a path to a matching $L'$ of the kind treated in Case~2.
Therefore, $M=L+K$
is connected by a path to $M'=L'+K'$
where $K'$ is either $K$ or its flip.
As we saw in Case~2, $M'$ has a desired decomposition,
therefore, the statement of Theorem holds for $M$.

\smallskip

We have verified all the cases, and, so, the proof is complete. \hfill\qed



\subsection{The order of the ring component}\label{sec:ring_big}
In Introduction, the ring component was referred to as the ``big component''.
In order to show that it indeed has the biggest order, we need to compare its order with that of medium components.

\begin{proposition}\label{thm:big}
For each $k \geq 9$,
the order of the ring component is larger than
the order of the components that contain DBD- (for odd $k$)
or, respectively EDB- (for even $k$)
matchings.
\end{proposition}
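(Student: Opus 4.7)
My plan is to express the order of the ring component exactly, and then check that it dominates the medium order for each $k \geq 9$. By the combined results proven above (Theorem~\ref{thm:ring_component} together with Theorems~\ref{thm:small_odd_struct}, \ref{thm:small_even_struct}, Proposition~\ref{thm:middle_odd_struct}, and Corollary~\ref{thm:middle_even_struct}), every matching of size $k$ belongs either to a small component, to a medium component, or to the ring component. Hence
\[
|\text{ring component}| \;=\; C_k \;-\; |\text{small matchings}|\;-\; |\text{medium matchings}|,
\]
and both subtracted quantities are given in closed form by Tables~\ref{tab:odd} and~\ref{tab:even}.

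First I would write down this difference explicitly. For odd $k$ (with $\ell=(k+1)/2$) the count of special matchings is
\[
S_k \;=\; \tfrac{1}{\ell}\tbinom{4\ell-2}{\ell-1} \;+\; \ell(2\ell-1)\,2^{\ell-3},
\]
and the medium order is $\ell$, so the claim is $C_k - S_k > \ell$. For even $k$ (with $\ell=k/2$) the count is
\[
S_k \;=\; \ell\,2^{\ell} \;+\; \ell(6\ell-6)\,2^{\ell-2},
\]
and the medium order is $6\ell-6$, so the claim is $C_k - S_k > 6\ell-6$.

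Next I would establish both inequalities by an easy asymptotic comparison backed by a finite check. The dominant contribution to $S_k$ is $\tfrac{1}{\ell}\tbinom{4\ell-2}{\ell-1}$ in the odd case and $O(\ell^2 2^\ell)$ in the even case. Using Stirling's formula, $\tbinom{4\ell-2}{\ell-1}$ grows like $(256/27)^{\ell}\cdot \operatorname{poly}(\ell)$, which is asymptotically negligible compared with $C_k \sim 4^{k}/k^{3/2} = 16^{\ell}/\operatorname{poly}(\ell)$; similarly $\ell^2 \cdot 2^{\ell}$ is completely dwarfed by $16^{\ell}$. Hence $C_k - S_k$ grows like $C_k$ itself, and in particular eventually exceeds any polynomial in $\ell$. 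The ratio $C_{k+2}/C_k = (2k+1)(2k+3)/((k+2)(k+3)) \to 4$ gives a clean inductive step: once the inequality holds at $k=k_0$ with enough slack, it persists for all $k \geq k_0$.

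The main (only) obstacle is pinning down a value $k_0$ small enough that the finitely many remaining cases $9 \leq k < k_0$ can be checked by direct substitution. I would carry out the verification for, say, $k=9,10,\dots,20$ by plugging the explicit formulas into the two inequalities, and for $k\geq 21$ use the ratio estimate above together with the bound $S_{k+2}/S_k \leq 8$ (easily obtained from the closed forms) to propagate the inequality inductively. This completes the proof that the ring component is strictly larger than any medium component, justifying the adjective \emph{big}.
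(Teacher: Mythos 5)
Your overall strategy is exactly the paper's: write the order of the ring component as $C_k$ minus the known counts of vertices in small and medium components, check that the subtracted quantity grows strictly slower than $C_k$, verify a base case, and propagate by comparing consecutive ratios. The problem is that both numerical claims on which your inductive step rests are wrong, and together they make the step fail as written. First, $C_{k+2}/C_k=\frac{4(2k+1)(2k+3)}{(k+2)(k+3)}\to 16$, not $\frac{(2k+1)(2k+3)}{(k+2)(k+3)}\to 4$; you dropped the factor $4$ arising from the two applications of $C_{n+1}/C_n=\frac{2(2n+1)}{n+2}$. Second, the bound $S_{k+2}/S_k\le 8$ is false in the odd case: the dominant term $\frac{1}{\ell}\binom{4\ell-2}{\ell-1}$ has consecutive ratio tending to $256/27\approx 9.48$, and already for $\ell=9\to 10$ (i.e.\ $k=17\to 19$) the ratio of the $S_k$'s exceeds $8$. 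With the constants as you state them, the minuend would grow by a factor of about $4$ per step while the subtrahend could grow by $8$, so the induction cannot close and would in fact suggest the opposite conclusion.

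The repair is exactly what the paper does: use the correct growth rates per unit of $\ell$ (namely $16$ for the Catalan side, $256/27$ for the Fuss--Catalan term, and at most $2$ for everything else), choose a threshold such as $10$ lying strictly between $256/27$ and $16$, show that for $\ell\ge 5$ the left-hand side of each inequality grows by a factor greater than $10$ while the right-hand side grows by a factor less than $10$, and check the base case $\ell=5$ directly. With those constants corrected, your argument coincides with the paper's.
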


\begin{proof}
Since the total number of vertices in $\dcm_k$ is $C_k$,
and we know the order and the number of all other components,
we obtain that the for odd $k$ the order of the ring component of $\dcm_k$ is
\[ C_{2\ell-1} - 1\cdot \frac{1}{\ell}\binom{4\ell-2}{\ell-1} - \ell \cdot (2\ell-1)  2^{\ell-3},\] and for even $k$ it is
\[ C_{2\ell} - 2 \cdot \ell \, 2^{\ell-1} -  (6 \ell - 6) \cdot \ell \, 2^{\ell-2}.\]
Thus, we need to show that for odd $k \geq 9$ we have
\[C_{2\ell-1} - \frac{1}{\ell}\binom{4\ell-2}{\ell-1} - \ell (2\ell-1)  2^{\ell-3} > \ell,\]
or, equivalently,
\begin{equation}\label{eq:big_odd}
C_{2\ell-1} > \frac{1}{\ell}\binom{4\ell-2}{\ell-1} + \ell (2\ell-1)  2^{\ell-3} + \ell;
\end{equation}
and that for even $k \geq 10$ we have \[C_{2\ell} - \ell \, 2^{\ell} - \ell (6 \ell - 6) 2^{\ell-2} > 6\ell - 6,\]
or, equivalently,
\begin{equation}\label{eq:big_even}
C_{2\ell} > \ell \, 2^{\ell} + \ell (6 \ell - 6) 2^{\ell-2} + 6\ell - 6.
\end{equation}
First, notice that
Inequalities~\eqref{eq:big_odd} and~\eqref{eq:big_even} hold asymptotically since
the growth rate of
$(C_{2\ell-1})_{\ell\geq 1}$ and of $(C_{2\ell})_{\ell\geq 1}$ is $16$;
that of
$\left(\frac{1}{\ell}\binom{4\ell-2}{\ell-1}\right)_{\ell\geq 1}$ is $\frac{256}{27} \approx 9.48$;
and that of other terms is at most $2$.
In order to show that they hold for $k \geq 9$,
we verify them for $\ell=5$,
and show that for $\ell \geq 5$ 
we have
$\frac{\zrhs_{\ell+1}}{\zrhs_{\ell} } < 10$ and
$\frac{\zlhs_{\ell+1}}{\zlhs_{\ell} } > 10$ in them both.\footnote{
\textit{LHS} and \textit{RHS} denote 
the \textit{left-hand side} and the \textit{right-hand side}
of the respective inequalities.}
We omit further details.
\end{proof}

\section{More enumerating results, concluding remarks, and open problems}
\label{sec:conc}

\subsection{Vertices with largest degree}\label{sec:large_degree}
In Section~\ref{sec:small} we characterized matchings
with smallest possible degrees (as vertices of $\dcm_k$): $0$ and $1$.
One can expect that the matchings with the largest degree are the rings.
Here we show that this is indeed the case.

\begin{proposition}\label{thm:ring_degree}
For each $k>1$,
the vertices of $\dcm_k$ with the maximum degree are precisely those corresponding to the rings.
Their degree is the $k$th Riordan number,
\begin{equation}\label{eq:riordan}
  r_k = \frac{1}{k+1}\sum_{i=1}^{\left\lfloor \frac{k}{2} \right\rfloor} \binom{k+1}{i} \binom{k-i-1}{i-1}.
\end{equation}
\end{proposition}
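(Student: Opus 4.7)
The degree of $M$ in $\dcm_k$ equals the number of flippable partitions of $M$, by Proposition~\ref{thm:flip}; call this number $\varphi(M)$. The plan is to show $\varphi(M)\le r_k$ for every matching $M$, where $r_k$ denotes the number of non-crossing partitions of $\{1,\ldots,k\}$ into blocks of size $\ge 2$; the inequality will be strict whenever $M$ is not a ring, and~\eqref{eq:riordan} is the standard Narayana-type refinement of $r_k$ by the number of blocks.

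I enumerate the edges of $M$ as $e_1,\ldots,e_k$ in increasing order of their smaller endpoint, which I denote $\mathrm{first}(e_i)\in\{1,\ldots,2k\}$. To each flippable partition $\{S_1,\ldots,S_p\}$ of $M$ I associate the set partition $\Pi$ of $\{1,\ldots,k\}$ whose blocks are the index sets of the $S_j$. Each block of $\Pi$ has size $\ge 2$ because flippable sets do, and the map $\{S_j\}\mapsto\Pi$ is injective, since $\Pi$ together with the enumeration recovers the $S_j$. So to conclude $\varphi(M)\le r_k$ it suffices to verify that $\Pi$ is non-crossing in the linear order on $\{1,\ldots,k\}$.

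This is the crux of the argument. Suppose for contradiction that $i_1<j_1<i_2<j_2$ with $e_{i_1},e_{i_2}\in S$ and $e_{j_1},e_{j_2}\in S'\ne S$. By Proposition~\ref{thm:flip}, the convex hulls of the endpoint sets of $S$ and $S'$ are disjoint convex polygons; since their vertices all lie on the convex curve $\mathbf{\Gamma}$, they are separated by a chord of $\mathbf{\Gamma}$, and so the endpoint sets occupy two complementary arcs. But the four values $\mathrm{first}(e_{i_1})<\mathrm{first}(e_{j_1})<\mathrm{first}(e_{i_2})<\mathrm{first}(e_{j_2})$ alternate between these endpoint sets in the cyclic order of $\{1,\ldots,2k\}$, contradicting the arc separation.

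For the equality characterization, if $R$ is a ring with $e_i=P_{2i-1}P_{2i}$, then the edges of $R$ all lie on the single inner face, and every non-crossing partition $\Pi$ of $\{1,\ldots,k\}$ into blocks of size $\ge 2$ lifts back to a flippable partition: each block defines a flippable set, and the non-crossing property of $\Pi$ is precisely the statement that the resulting convex hulls are pairwise disjoint. Hence $\varphi(R)=r_k$. For any non-ring $M$, the one-block partition $\{\{1,\ldots,k\}\}$ is not in the image --- it would require $\{e_1,\ldots,e_k\}$ itself to be a flippable set, which happens only when all $k$ edges share a common face, i.e., only for a ring --- so $\varphi(M)<r_k$. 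The main obstacle is the non-crossing argument in the third paragraph, which rests on the elementary geometric fact that two disjoint convex polygons with vertices on a common convex curve are separated by a chord of that curve.
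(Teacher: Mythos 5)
Your proof is correct, but it reaches the conclusion by a genuinely different route from the paper's for the maximality part. The paper compares $d(M)$ to $d(R)$ by a deformation argument: it takes a diagonal edge $P_\alpha P_\beta$ of a non-ring $M$, slides $P_\beta$ next to $P_\alpha$, observes that every flippable partition of $M$ survives as a flippable partition of the new matching (so the degree is monotone under this move), and iterates until a ring is reached; only then does it count the ring's degree by contracting each boundary edge to a point and invoking the bijection with singleton-free non-crossing partitions of $[k]$. You instead build one uniform injection, valid for \emph{every} $M$ at once, from flippable partitions of $M$ into singleton-free non-crossing partitions of $\{1,\dots,k\}$, by indexing edges by their smaller endpoint; the key verification --- that a crossing of index blocks would force the first-endpoints of four edges to interleave between two sets that Proposition~\ref{thm:flip} confines to complementary arcs of $\mathbf{\Gamma}$ --- is sound, since the points are in convex position and disjoint convex hulls are strictly separated by a line. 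Your surjectivity check for the ring is exactly the paper's contraction bijection in disguise ($e_i=P_{2i-1}P_{2i}\mapsto i$), and both arguments obtain strictness from the same observation that the one-block partition is flippable only for rings. What your version buys is the clean quantitative statement $d(M)\le r_k$ for all $M$ in a single step, with no induction or deformation; what the paper's version buys is a local monotonicity statement ($d(M)\le d(M')$ along a chain of degenerations) that is independent of the final count. One small caveat: you assert that Eq.~\eqref{eq:riordan} is the block-number refinement of the count of singleton-free non-crossing partitions without proof, but the paper does the same (citing OEIS and Chen et al.), so this is not a gap relative to the paper's own standard of rigor.
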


\begin{proof}
Let $M$ be any matching of size $k$ which is not a ring.
Let $e = P_\alpha P_\beta$ be a diagonal edge of $M$.
Modify the point set $X_{2k}$ by transferring $P_\beta$
to the position between $P_\alpha$ and $P_{\alpha+1}$ (on $\mathbf{\Gamma}$).
Denote the modified point set by $X'_{2k}$
Let $M'$ be the matching of $X'_{2k}$ whose members connect the pairs of points with the same labels as $M$.
It is easy to see that $M'$ is a non-crossing matching,
and that each flippable partition of $M$
(given by labels of endpoints of edges)
is a flippable partition of $M'$.
Therefore $d(M) \leq d(M')$.
We repeat this procedure until we eventually reach a ring $R$.
Thus, we have $d(M) \leq d(R)$.
Moreover, since the partition that consists of one set
(whose members are all the edges)
is flippable in $R$ but not in $M$,
we have in fact $d(M) < d(R)$.

In order to find $d(R)$, we proceed as follows.
Assume that $R$ is the ring with edges $P_1P_2$, $P_3P_4, \dots$, $P_{2k-1}P_{2k}$.
For each $1 \leq i \leq k$, contract the edge $P_{2i-1}P_{2i}$ into the point $P_{2i}$.
The induced modification of flippable partitions of $R$
is a bijection 
between flippable partitions of $R$
and
non-crossing partitions of $\{Q_1, Q_2, \dots, Q_k\}$ without singletons.
The partitions of the latter type are known to be enumerated by Riordan numbers~\cite[A005043]{oeis}.
See~\cite{bernhart} for bijections between this structure and other structures enumerated by Riordan numbers.
The 
explicit formula for the $k$th Riordan numbers is as in Eq.~\eqref{eq:riordan}
(see~\cite{chen} for a simple combinatorial proof),
and asymptotically  $r_k=\Theta^*(3^k)$.
\end{proof}

\subsection{Number of edges}
In this section we consider enumeration of edges of $\dcm_k$.
Denote, for $k\geq 1$, the number of edges in $\dcm_k$ by $d_k$;
moreover, set $d_0=1$.
Let $z(x)$ be the corresponding generating function $\displaystyle{z(x) = \sum_{k\geq 0} d_k x^k}$,
and let $Z(x)=2z(x)-1$.

\begin{proposition}\label{thm:edges_gf}
The function $Z(x)$ satisfies the equation
\begin{equation}\label{eq:edges_gf}
Z(x) = 1+\frac{2x^2Z^4(x)}{1-xZ^2(x)}.
\end{equation}
Moreover, $d_k = \Theta^*(\mu^n)$ with $\mu \approx 5.27$.
\end{proposition}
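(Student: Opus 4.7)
My plan is to interpret $Z(x)$ combinatorially, derive Eq.~\eqref{eq:edges_gf} from a standard non-crossing partition decomposition, and then extract the growth of $d_k$ from the dominant singularity of the resulting algebraic function.

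By Observation~\ref{thm:def_cycles}, an unordered edge $\{M,M'\}$ of $\dcm_k$ is the same datum as a decomposition of $M\cup M'$ into pairwise disjoint alternating cycles, each of length at least $4$, and each alternating cycle of length $2m$ admits exactly two $2$-colourings into disjoint matchings. Hence, for $k\ge 1$, a non-crossing partition $\pi$ of $X_{2k}$ into $c$ parts of even size $\ge 4$ encodes exactly $2^c$ ordered pairs $(M,M')$, i.e.\ $2^{c-1}$ edges of $\dcm_k$ (since disjointness forces $M\ne M'$). Writing $\tilde z_k := [x^k]Z(x)$, one checks $\tilde z_0 = 1 = 2^0$ (empty partition) and $\tilde z_k = 2 d_k = \sum_\pi 2^{c(\pi)}$ for $k \ge 1$, so $Z(x)$ is the weighted generating function for these partitions.

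Next I decompose such a partition according to the part $B$ containing $P_1$. If $|B| = 2m$ with $m \ge 2$, then $B$ splits the remaining $2(k-m)$ points into $2m$ independent convex-hull gaps, each of which is itself a partition of the same weighted kind. The part $B$ contributes weight $2 x^m$ (two colourings; $m$ matching edges) and each gap contributes a factor $Z(x)$, yielding
\[
  Z(x) \;=\; 1 + \sum_{m \ge 2} 2\, x^m Z(x)^{2m} \;=\; 1 + \frac{2 x^2 Z(x)^4}{1 - x Z(x)^2},
\]
which is Eq.~\eqref{eq:edges_gf}.

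After clearing denominators, $Z$ is the unique power-series solution with $Z(0)=1$ of $F(x,Z) := 2x^2 Z^4 + xZ^3 - xZ^2 - Z + 1 = 0$, an algebraic equation whose coefficient sequence is non-negative. By standard singularity analysis of algebraic generating functions (Flajolet--Sedgewick, Ch.~VII), the radius of convergence $x_0$ of $Z$ is the least positive $x$ admitting a simultaneous positive solution to $F = F_Z = 0$; at such a point the singularity is generically of square-root type, giving $\tilde z_k = \Theta(\mu^k k^{-3/2})$ with $\mu = 1/x_0$. The substitution $u = xZ^2$ collapses the system to the single cubic $2u^3 - 5u^2 - 2u + 1 = 0$; its smallest positive root $u_0 \approx 0.3009$ gives $Z_0 = (1 - u_0 + 2u_0^2)/(1-u_0) \approx 1.2592$, $x_0 = u_0/Z_0^{\,2} \approx 0.1898$, and $\mu \approx 5.27$, whence $d_k = \Theta^*(\mu^k)$.

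The main technical burden lies in the last step: one must verify that $(x_0,Z_0)$ is a simple branch point of $F$ (non-vanishing $\partial_{ZZ} F$ there), that no competing singularity of equal modulus exists (aperiodicity of the support of $\tilde z_k$, clear already from $\tilde z_2, \tilde z_3 > 0$), and that $x_0$ is indeed the dominant singularity of $Z$. For the concrete polynomial $F$ above these are routine verifications via the standard criteria for positive algebraic schemas, which is why I expect the elimination producing the clean cubic in $u$ to be the only genuine calculation in the proof.
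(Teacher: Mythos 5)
Your proof is correct and follows essentially the same route as the paper: the same identification of edges of $\dcm_k$ with $2$-coloured non-crossing partitions of $X_{2k}$ into even parts of size at least $4$ (each edge counted via the factor $2^{c-1}$), and the same decomposition by the part containing $P_1$, yielding Eq.~\eqref{eq:edges_gf}. For the growth rate the paper locates the dominant singularity by viewing the equation as a quadratic in $x$ via $Y=xZ$ and finding where the derivative of the explicit inverse $V(x)=\frac{x}{2}\bigl(1+x+\sqrt{1-2x-7x^2}\bigr)$ vanishes, whereas you solve the characteristic system $F=F_Z=0$ through the substitution $u=xZ^2$; both computations give $x_0\approx 0.1898$ and $\mu\approx 5.268$, so the difference is only in the elimination, not in the method.
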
{

\begin{proof}
Any edge $e$ of $\dcm_{k}$ corresponds to a pair of disjoint compatible matchings -- say, $M_a$ and $M_b$.
By Observation~\ref{thm:def_cycles}, $M_a \cup M_b$ is a union of pairwise disjoint cycles
that consist alternatingly of edges of $M_a$ and $M_b$.
We can color them by blue and red, as in Figure~\ref{fig:chains_1}.
If we ignore the colors, these cycles form a non-crossing partition of $X_{2k}$ into even parts of size at least $4$.
Given such a partition, each polygon can be colored alternatingly by two colors in two ways.
Each way to color alternatingly all the polygons in such a partition
corresponds to an edge of $\dcm_k$. However, in this way each edge is created twice 
because exchanging all the colors results in the same edge.
Since each part in the partition can be colored in two ways, 
the total number of edges of $\dcm_{k}$
is equal to
the number of non-crossing partitions of $X_{2k}$ into even parts of size at least $4$,
when each partition is counted $2^{p-1}$ times,
where $p$ is the number of parts.
Equivalently, $H(x)$
is the generating function for the number of such partitions of $X_{2k}$
where each part is colored by one of two colors.
Since the part that contains $1$ is a polygon of even size at least $4$,
and the skip between any pair of consecutive points of this polygon
possibly contains further partition of the same kind,
we have
\[ 
Z(x) = 1 + 2x^2Z^4(x) + 2x^3Z^6(x) + 2x^4Z^8(x) + 2x^5Z^{10}(x) + \dots,
\] 
which is equivalent to Eq.~\eqref{eq:edges_gf}.

We can estimate the asymptotic growth rate of $(d_k)_{k \geq 0}$ as follows.
By the Exponential Growth Formula (see~\cite[IV.7]{flajolet}),
for an analytic function $f(x)$ the asymptotic growth rate 
is $\mu = \frac{1}{\lambda}$,
where $\lambda$ is the absolute value of the singularity of $f(x)$ closest to the origin.
It is easier to find $\lambda$ for $Y(x) = xZ(x)$.
From Eq.~\eqref{eq:edges_gf} we have
\[2Y^4(x) + Y^3(x) - x Y^2(x) -x Y(x) + x^2 = 0.\]
This is a square equation with respect to $x$; solving it we obtain that $Y(x)$ is the compositional inverse of
\[V(x)=\frac{x}{2}\left(1+x+\sqrt{1-2x-7x^2}\right).\] 
The singularity points of $Y(x)$ correspond to the points where the derivative of $V(x)$ vanishes.
Analyzing $V(x)$, we find that the singularity point of $Y(x)$ with 
the smallest absolute value is $\lambda \approx 0.1898$.
Therefore, the asymptotic growth rate of $(d_k)_{k \geq 0}$
is $\mu \approx 5.2680$.
\end{proof}

\subsection{``Almost perfect'' matchings for odd number of points}

In this section we consider, without going into details, the following variation.
Let $X_{2k+1}$ be a set of $2k+1$ points in convex position.
In this case we can speak about \textit{almost perfect} (non-crossing straight-line) matchings --
matchings of $2k$ out of these points, one point remaining unmatched.
Clearly, the number of such matchings is $kC_k$.
The definition of disjoint compatibility and that of disjoint compatibility graph 
are carried over for this case in a straightforward way.
In contrast to the case of perfect matchings of even number of points,
we have here the following result.

\begin{claim}
For each $k$, the disjoint compatibility graph 
of almost perfect matchings of $2k+1$ points in general position
is connected.
\end{claim}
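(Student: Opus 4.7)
The plan is to leverage the Disjoint Compatible Matching Theorem of Ishaque et~al.\ (which asserts that every non-crossing perfect matching of an even number of points in general position admits a disjoint compatible non-crossing matching), together with a phantom-point trick that exploits the freedom afforded by the unmatched vertex.

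First I would establish a key neighbor-construction lemma: given any almost perfect matching $M$ of $X_{2k+1}$ with unmatched point~$p$, place an auxiliary point $p^{*}$ so that $X_{2k+1} \cup \{p^{*}\}$ is in general position and the segment $pp^{*}$ does not cross any edge of~$M$. Then $M \cup \{pp^{*}\}$ is a non-crossing perfect matching of the $2k+2$ points $X_{2k+1} \cup \{p^{*}\}$. By Ishaque et~al.\ it has a disjoint compatible matching $N$; let $p^{*}q$ denote the edge of $N$ incident to $p^{*}$. Then $\tilde{M} := N \setminus \{p^{*}q\}$ is an almost perfect matching of $X_{2k+1}$ with unmatched point~$q$, and it is disjoint compatible to $M$ in the almost-perfect DC graph: $\tilde{M}$ shares no edge with $M$ (since $N$ shares none with $M \cup \{pp^{*}\}$), and no edge of $\tilde{M}$ crosses an edge of $M$ (for the same reason).

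Next I would argue that by varying the position of $p^{*}$ one obtains enough control over the identity of the new unmatched point $q$ to walk it anywhere in $X_{2k+1}$ through a sequence of DC-steps. Concretely, placing $p^{*}$ outside the convex hull in a chosen direction, or on a chosen side of an edge incident to $p$, constrains which vertex the Ishaque et~al.\ construction matches $p^{*}$ to, so that the new unmatched point can be steered in a prescribed direction. Iterating this construction, the set of possible unmatched points reachable from $M$ via DC-paths is shown to be all of~$X_{2k+1}$.

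Finally, I would combine this unmatched-point mobility with induction on $k$ to derive full connectivity. Given two almost perfect matchings $M_1$, $M_2$, first use the walk above to bring both into a common canonical form at a designated convex-hull vertex $v$ (unmatched point at $v$, standardized boundary edges around~$v$), and then peel off the standardized boundary edges to reduce to an almost perfect matching of a smaller point set, where the inductive hypothesis closes the argument. The hard part will be the second step: Ishaque et~al.\ only guarantees \emph{some} disjoint compatible matching, not control over which point becomes unmatched, so obtaining the required control requires either a refinement of their construction sensitive to the location of $p^{*}$, or a multi-step walk in which several phantom-point moves are composed to realize an arbitrary shift of the unmatched point across~$X_{2k+1}$.
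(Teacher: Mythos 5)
Your neighbor-construction lemma already has a parity gap: the theorem of Ishaque et al.\ guarantees a disjoint compatible matching only for non-crossing perfect matchings of \emph{even} size --- that is precisely why the original conjecture was restricted to even size, since matchings of odd size can be isolated. Your augmented matching $M \cup \{pp^{*}\}$ has $k+1$ edges, so for every even $k$ the citation does not apply, and $M \cup \{pp^{*}\}$ may have no disjoint compatible matching at all for the placement of $p^{*}$ you chose; you would need a separate argument that some placement of $p^{*}$ avoids the isolated configurations. More seriously, the two steps that would actually yield connectivity are not proved. As you acknowledge, the Ishaque et al.\ result is purely existential and gives no control over which vertex $p^{*}$ is matched to, so the ``steering'' of the unmatched point is an unsubstantiated hope rather than a lemma. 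And even granting full mobility of the unmatched point, connectivity does not follow: you must still join two matchings with the \emph{same} unmatched point, and the proposed reduction (``bring both into a canonical form with standardized boundary edges around $v$, then peel them off'') is asserted without argument --- in general position it is not even clear what the standardized boundary edges are, nor why an arbitrary matching can be driven to such a form by disjoint compatible moves.

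For comparison: despite the phrase ``general position'' in the statement, the section is set up for points in \emph{convex} position (its opening sentence fixes $X_{2k+1}$ in convex position), and the paper's argument is an elementary induction in that setting. One finds a separated pair $K$ (a block or antiblock on four consecutive points) not interrupted by the unmatched point, lets $K$ oscillate while the rest of the matching is transformed into a ring by the inductive hypothesis, and then, if the resulting antiblock sits astride an edge of that ring, rotates the ring --- moving its unmatched point step by step around the circle --- until the whole configuration becomes one of the $2k+1$ rings; the rings themselves form a $(2k+1)$-cycle in the disjoint compatibility graph. None of the machinery you invoke (phantom points, the Ishaque--Souvaine--T\'oth theorem) is needed there, and conversely your proposal, as it stands, does not establish the claim in either the convex or the general setting.
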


This claim can be proven along the following lines.
For $k=1, 2$, it is verified directly. 
For $k \geq 3$, we apply induction similarly to that in the proof of Theorem~\ref{thm:ring_component}.
The \textit{rings} in this case are the matchings that contain only boundary edges and one unmatched point.
For fixed $k$, there are exactly $2k+1$ rings that are uniquely identified by their unmatched point.
Denote by $R_j$ the ring whose unmathced point is $P_j$.
Then the ring $R_j$ is disjoint compatible to exactly two rings, namely,
$R_{j-1}$ and $R_{j+1}$.
Thus, the rings induce a cycle of size $2k+1$.

Let $M$ be an almost perfect matching, and let $P$ be the unmatched point.
We show that $M$ is connected by a path to the rings as follows.
It is always possible to find a separated pair $K$ 
which is not interrupted by $P$ 
(suppose that $K$ connects the points $P_i, P_{i+1},P_{i+2},P_{i+3}$).
We let $K$ oscillate, while transforming $L=M-K$ into a ring $R$ (on $2k-3$ points).
It is possible to assume that after this process $K$ is replaced by an antiblock $K'$.
Now either $K'+R$ is a ring and we are done,
or $R$ has the edge $P_{i-1} P_{i+4}$.
In the latter case we continue the reconfiguration: 
$K'$ continues to oscillate, while we ``rotate'' $R$
so that its unmatched point moves clockwise.
Eventually, we will reach two matchings in which $R$ is replaced by 
rings whose unmatched points are $P_{i-1}$ and $P_{i+4}$.
For one of them, we still have the antiblock $K'$,
and the whole matching is a ring.

\subsection{Summary and open problems}
We showed that for sets of $2k$ points in convex position
the disjoint compatibility graph is always disconnected
(except for $k=1, 2$).
Moreover, we proved that for $k\geq 9$ there exist exactly three kinds of connected components:
small, medium and big.
For each $k$ we found the number of components of each kind.
For small and medium components, we determined precisely their structure.

For sets of points \textbf{in general position}, the disjoint compatibility graph depends on the order type.
Therefore only some questions concerning the structure can be asked in general.
We suggest the following problems for future research.

\begin{enumerate}
\item \textbf{Connectedness for a general point set.} 
What is more typical for set of points in general position:
being the disjoint compatibility graph connected or disconnected? 
The former possibility can be the case since, intuitively, one of the reasons 
for the disconnectedness when the points are in convex position
is the fact that all edges connect two points that lie on the boundary of the convex hull.
One can conjecture, for example, that 
the disjoint compatibility graph is connected
if the fraction of points in the interior of the convex hull is not too small.

\item \textbf{Isolated matchings.} 
In order to construct isolated matchings for sets of points not only in convex position, 
we can use the following recursive procedure. 
First, any matching of size $1$ is isolated.
Next, let $M=M_1 \cup \{e\} \cup M_2$, 
where $M_1$ and $M_2$ are isolated matchings,
and the edge $e$ blocks the visibility between $M_1$ and $M_2$ (see Figure~\ref{fig:iso}(a)).
Then it is easy to see that $M$ is also isolated.
For matchings of points in convex position, this construction gives all isolated matchings:
indeed, one can easily show that 
for this case this construction is equivalent to that 
from the definition of I-matchings (see Section~\ref{sec:small_odd}).
However, for points in general (not convex) position
it is possible to find an isolated matching that cannot be obtained by this recursive procedure:
see Figure~\ref{fig:iso}(b).

\begin{figure}[h]
$$\includegraphics[width=100mm]{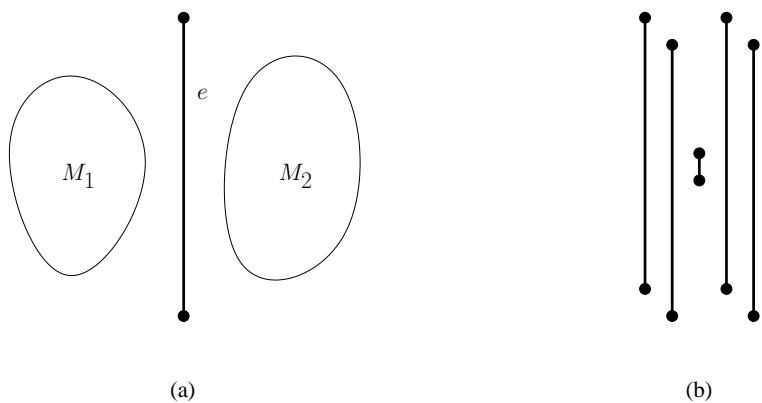}$$
\caption{(a) A recursive construction of isolated matchings. 
(b) An isolated matching that cannot be obtained by this construction.}
\label{fig:iso}
\end{figure}

\end{enumerate}

\section*{Acknowledgments}

Research on this paper was initiated while Tillmann Miltzow was
visiting the Institute for Software Technology of the University of Technology Graz.
We would like to thank
Thomas Hackl, Alexander Pilz and Birgit Vogtenhuber
for valuable discussions and comments.
Research of \mbox{Oswin} Aichholzer is supported by the ESF EUROCORES
programme EuroGIGA, CRP `ComPoSe', Austrian Science Fund (FWF): I648-N18.
Research of Andrei Asinowski is supported by the ESF EUROCORES programme EuroGIGA, CRP `ComPoSe',
Deutsche Forschungsgemeinschaft (DFG), grant FE 340/9-1.


\end{document}